\renewcommand*\env@matrix[1][\arraystretch]{%
  \edef\arraystretch{#1}%
  \hskip -\arraycolsep
  \let\@ifnextchar\new@ifnextchar
  \array{*\c@MaxMatrixCols c}}
\def\csname ver@etex.sty\endcsname{3000/12/31}
\title{CutsAndLiftingsForGeneralizedComplexCutPolytope}
\date{ }
 \newcommand{\strFunc}{\normalfont{\texttt{str}}}
\newcommand{\BlueUpdate}[1]{{#1}}
\newcommand{\conjTrans}{^\mathrm{H}}
\newcommand{\imagUnit}{\mathbf{i}}
\newcommand{\ConvHull}{\mathsf{Conv}}
\newcommand{\rankOp}{\mathrm{rk}}
\newcommand{\triangleOp}{\mathbf{T}}
\newcommand{\zeroPrecisionMimo}{10^{-6}}
\newcommand{\cutPolytope}{\mathrm{CUT}^n_m}
\newcommand{\cutPolytopeNoSS}{\mathrm{CUT}}
\newcommand{\elliptopeNoSS}{\mathcal{E}}
\newcommand{\numRunsMimo}{600}
\newcommand{\elliptope}{\mathcal{E}^n_m}
\newcommand{\friezeVec}{\mathbf{a}}
\newcommand{\Hadamard}{\odot}
\newcommand{\polySet}{\widetilde{\mathbb{C}}[x]}
\newcommand{\liftingSet}[1]{\mathbf{L}(\mathscr{B}_{#1})}
\newcommand{\complexTriSet}{\triangleOp\left(\mathcal{E}^n_3\right)}
\newcommand{\realTriSet}{\mathrm{Re}\left( \complexTriSet \right)}
\DeclareMathOperator*{\argmin}{arg\,min}
\DeclareMathOperator*{\argmax}{arg\,max}
\lstdefinelanguage{Sage}[]{Python}
{morekeywords={False,sage,True},sensitive=true}
\definecolor{dblackcolor}{rgb}{0.0,0.0,0.0}
\definecolor{dbluecolor}{rgb}{0.01,0.02,0.7}
\definecolor{dgreencolor}{rgb}{0.2,0.4,0.0}
\definecolor{dgraycolor}{rgb}{0.30,0.3,0.30}
\newtheorem{theorem}{Theorem}%
\newtheorem{corollary}{Corollary}%
\newtheorem{lemma}{Lemma}
\newtheorem{proposition}{Proposition}
\newtheorem{conjecture}{Conjecture}
\newtheorem{remark}{Remark}
\newtheorem{example}{Example}
\newtheorem{definition}{Definition}
\crefname{conjecture}{conjecture}{conjectures}
\Crefname{lstlisting}{Listing}{Listings}
\begin{document}

\title{Cuts and semidefinite liftings for the complex cut polytope} 

\author{Lennart Sinjorgo \thanks{Department of Econometrics and OR, Tilburg University, CentER, The Netherlands, {\tt l.m.sinjorgo@tilburguniversity.edu}}
\quad  {Renata Sotirov} \thanks{Department of Econometrics and OR, Tilburg University, CentER, The Netherlands, {\tt r.sotirov@tilburguniversity.edu}}
\quad {Miguel F. Anjos} \thanks{School of Mathematics, University of Edinburgh, Scotland, United Kingdom, {\tt anjos@stanfordalumni.org}}}
\maketitle

\begin{abstract}
    We consider the complex cut polytope: the convex hull of Hermitian rank~1 matrices $xx\conjTrans$, where the elements of $x \in \mathbb{C}^n$ are $m$th unit roots. These polytopes have applications in MAX-3-CUT, digital communication technology, angular synchronization and more generally, complex quadratic programming. 
    For \mbox{$m=2$},  the complex cut polytope corresponds to  the well-known {cut polytope}.   
    We generalize valid cuts for this polytope to cuts for any complex cut polytope with finite $m>2$ and
    provide a framework to compare them.
    Further, we consider  a second  semidefinite lifting of the complex cut polytope for $m=\infty$.
    This lifting is proven to be equivalent to other complex Lasserre-type liftings of the same order proposed in the literature, while being of smaller size. Our theoretical findings are supported by numerical experiments on various optimization problems.   
\end{abstract}

{\bf Keywords.}   
 complex semidefinite programming,  complex cut polytope, polyhedral combinatorics,  MIMO, angular synchronization \\

{\bf MSC2020 codes.}  52B05, 60G35, 90C20, 90C22

\section{Introduction}
 
    The maximum-cut problem (MAX-CUT) on a graph, is to find a partition of vertices in two disjoint subsets, that maximizes the number of edges that cross the partition. MAX-CUT finds applications in VLSI design and physics \cite{barahona1988application}, data science \cite{ding2001min}, and is $\mathcal{NP}$-hard. The convex hull of the rank 1 matrices representing all partitions is known as the cut polytope. This polytope admits an exponential number (in $n$) of extreme points, and  it cannot be efficiently described, in contrast to its positive semidefinite (PSD) approximation, the elliptope \cite{Laurent1995OnAP}.

    We consider here complex generalizations of the cut polytope and elliptope, namely the complex cut polytope, denoted $\cutPolytope$, and the complex elliptope, denoted $\elliptope$. For fixed integers $m$ and $n$, $\cutPolytope$ is defined as the convex hull of Hermitian rank 1 matrices $xx\conjTrans$, where the elements of the vectors $x \in \mathbb{C}^n$ are $m$th unit roots. For $m = 2$, $\cutPolytope$ corresponds to the cut polytope. 
   The set $\cutPolytope$ finds applications in the multiple-input multiple-output detection problem (MIMO) \cite{jiang2021tightness,lu2019tightness,mobasher2007near,zhang2011optimal}, angular synchronization \cite{bandeira2017tightness}, phase retrieval \cite{waldspurgerPhaseRecovery}, radar signal processing \cite{Soltanalian2013DesigningUC,lu2020enhanced}, and for $m = 3$, it can be used to model MAX-3-CUT \cite{goemans2001approximation}. 
    For finite $m \geq 3$, algorithms for optimization over $\cutPolytope$ are proposed in \cite{lu2018argument,lu2020enhanced}, and approximation ratios are studied in \cite{so2007approximating,zhang2006complex}.

    In this work, we derive novel cuts in the complex plane that separate $\mathcal{E}^n_m$ from $\cutPolytope$.  In particular, 
    we derive all facets of $\cutPolytopeNoSS^3_3$ to obtain an exact description.
    We define a function $\strFunc$, that provides the approximation ratio of maximization over $\elliptope$ and maximization over $\cutPolytope$, for given problem instances.
    This function is used for numerically evaluating the effect of adding valid cutting planes to $\elliptope$. We prove that the  {cuts introduced here} are invariant under rotations and taking the conjugate. We also investigate the effect of adding  cuts to $\elliptope$ for various optimization problems. 
    
      Optimization over $\elliptope$ can be done in polynomial time (for fixed precision), by solving a complex semidefinite program (CSDP).
    CSDPs have recently received much attention in the literature \cite{josz2018lasserre,wang2023real,wang2022exploiting,wang2023more,magron2022exact,zhong2018near}. 
    CSDPs with matrix variables of order $n$ are solved by SDP solvers as real SDPs with matrix variables of order $2n$. 
    In \cite[Corollary 2.5.2]{dobre2011semidefinite} and \cite{wang2023real}, conditions are provided under which this doubling of the size can be avoided. In this work, we extend these conditions. Specifically, we show that CSDPs can be reformulated as real SDPs of same size, when the objective function contains only real coefficients, and the feasible set of the CSDP is closed under complex conjugation.  In particular, we show that this is the case for CSDPs over $\elliptope$, and that the derived complex facets can be equivalently reformulated to real facets.

    The set $\cutPolytopeNoSS^n_\infty$ is studied in \cite{jarre2020set}. The first semidefinite lifting of $\cutPolytopeNoSS^n_\infty$, denoted  ${\mathcal E}^n_\infty$,
    is also known as the set of correlation matrices \cite{grone1990extremal,li1994note}. Here, we extend the results of \cite{jarre2020set}. In particular, we  consider second semidefinite Lasserre-type liftings of $\cutPolytopeNoSS^n_\infty$. 
     Such liftings are defined in terms of moment matrices, and we study second  liftings with smaller moment matrices than those proposed in the literature \cite{jarre2020set,josz2018lasserre}. Despite this decrease in size, we show  that here considered liftings are equivalent to those proposed in the literature.
    Moreover,  for $n = 4$ (the smallest $n$ for which $\cutPolytopeNoSS^n_\infty \subsetneq \elliptopeNoSS^n$), we prove that the second semidefinite lifting of $\cutPolytopeNoSS^n_\infty$
        excludes all rank 2 extreme points present in $\elliptopeNoSS^4_\infty$, and that matrices in this set satisfy a certain valid cut for $\cutPolytopeNoSS^4_\infty$. 

     We also show, via a constructive proof, that $\elliptope$ contains rank 2 extreme points for all integer $n,m \geq 3$. This shows the strict inclusion of $\cutPolytope$ in $\elliptope$ for these values of $n$ and $m$. For $n = 3$, we provide necessary and sufficient conditions for matrices to be rank 2 extreme points of $\elliptopeNoSS^3_m$. \\

    This paper is organized as follows. Notation is given in \Cref{section_notation}. We provide the  definitions of $\cutPolytope$ and $\elliptope$ in \Cref{section_preliminaries}. 
    In \Cref{section_strFramework}, we introduce a framework for finding valid inequalities for $\cutPolytope$ and provide some valid cuts.
     In \Cref{section_exactDescriptionCut33}, we provide an exact description 
    of $\cutPolytopeNoSS^3_3$,  and use the derived facets of $\cutPolytopeNoSS^3_3$ to strengthen $\elliptopeNoSS^n_3$ for general $n \geq 3$. 
         {In Section~\ref{sect:efficientReformulation} we provide an efficient reformulation of CSDPs whose convex feasible sets are  closed under complex conjugation.}
    In \Cref{section_secondLiftingCutInf} we investigate the sets $\cutPolytopeNoSS^n_\infty$ and second semidefinite liftings of $\cutPolytopeNoSS^n_\infty$. In \Cref{section_liftingsOfE3m}, we  study rank 2 extreme points of $\elliptope$ for integer $m > 2$. In \Cref{section_numericalResults}, we numerically investigate the effect of adding cuts to $\elliptope$ for various optimization problems from the literature. Lastly, in \Cref{section_conclusions} we draw conclusions, and propose future research directions.

\subsection{Notation}
\label{section_notation}
For $n \in \mathbb{N}$, $[n] := \{1, \ldots, n\}$. The imaginary unit is denoted by $\imagUnit := 
\sqrt{-1}$. 
The complex conjugate of $z \in \mathbb{C}$ is denoted by $\bar{z}$, and its modulus by $| z | = \sqrt{z \bar{z}}$. The Hermitian transpose of a complex matrix $A$ is denoted by  $A\conjTrans$. For $z \in \mathbb{C}^n$, $\| z \|  = \sqrt{z\conjTrans z}$. A matrix $A\in  \mathbb{C}^{n\times n}$ is called Hermitian if $A=A\conjTrans$.
A Hermitian matrix $A$ is said to be Hermitian PSD, denoted by $A\succeq 0$, if $x\conjTrans A x \geq 0$ for all $x\in \mathbb{C}^n$. For any $z \in \mathbb{C}$, $\mathrm{Re}(z) \in \mathbb{R}$ and $\mathrm{Im}(z) \in \mathbb{R}$ denote the real and imaginary part of $z$, respectively. Additionally, by slight abuse of notation, for sets $U \subseteq \mathbb{C}^n$, we define $\mathrm{Re}(U) := \{ \mathrm{Re}(u) \, | \, u \in U \} \subseteq \mathbb{R}^n$. The boundary of a set $U$ is denoted $\partial U$.

We denote by $\mathcal{H}^n$ the set of Hermitian matrices of order $n$, and $\mathcal{H}^n_+$ for the set of positive semidefinite Hermitian matrices. Similarly, $\mathcal{S}^n$ and $\mathcal{S}^n_+$ denote, respectively, the sets of symmetric real matrices, and symmetric real positive semidefinite matrices, both of order $n$. If the context is clear, we omit the superscript $n$. The vector space  $\mathcal{H}^n$  is equipped with the trace inner product, i.e.,  for $A,B \in \mathcal{H}^n$ we have $\langle A, B \rangle := \text{Tr}(AB)$. The rank of a matrix is denoted $\rankOp(A)$.
 
For $n \in \mathbb{R}_+$, $\lfloor n \rfloor$ and $\lfloor n \rceil$ denote the rounding down and rounding to nearest integer operators, respectively.
The~Hadamard product of two matrices $A=(a_{ij})$ and $B=(b_{ij})$ of the same size is denoted by $A \Hadamard B$ and is defined as   $(A \Hadamard B)_{ij}:=a_{ij}b_{ij}.$

Let $J_n$ (resp.~$I_n$) denote the all ones matrix (resp.~identity matrix) of order $n$. The vector of all ones (resp.~zeros) and length $n$ is denoted by $\mathbf{1}_n$ (resp.~$\mathbf{0}_n$). However, we omit $n$ when the size of a matrix is clear from the context. Matrix $E_{ij}$ denotes the matrix which is zero everywhere, except for entry $(i,j)$, which has value 1.
For any matrix $X \in \mathcal{H}^n$, $\text{diag}(X) \in \mathbb{R}^n$ denotes the vector containing the diagonal entries of $X$. Similarly, for $x \in \mathbb{R}^n$, $\text{Diag}(x) \in \mathcal{S}^n$ denotes the diagonal matrix with $x$ on the diagonal.

\section{Preliminaries}
\label{section_preliminaries}
We define, for fixed integer $m \geq 2$, the set
\begin{align}
    \label{eqn_discreteSymbolSet}
    \mathcal{B}_m := \left\{ \exp{( \theta \imagUnit)} \, \middle| \, \theta = \frac{2 \pi k}{m}, \, k \in [m] \right\} \subseteq \mathbb{C},
\end{align}
as the set of the complex $m$th roots of unity. We define $\mathcal{B}^n_m$ as the set containing $m^n$ vectors of length $n$, in which each entry is restricted to be an element of $\mathcal{B}_m$.

In this paper, we consider a generalization of the well-known cut polytope \cite{deza1997geometry}, to which we refer as the \textit{complex cut polytope}. For integers $n,m \geq 2$, the complex cut polytope is defined as
\begin{align}
    \label{eqn_cutPolytopeSet}
    \cutPolytope := \ConvHull \left \{ xx\conjTrans \, | \, x \in \mathcal{B}^n_m \right \}.
\end{align}
As $\mathcal{B}_2 = \{ \pm 1 \}$, the set $\cutPolytopeNoSS^n_2$ coincides with the well-known cut polytope, which is a feasible set for  the maximum-cut problem \cite{GoemansWilliamsonMax2Cut,Laurent1995OnAP}. 
Optimization problems over $\cutPolytope$, $m \geq 2$, are $\mathcal{NP}$-hard, as they include MAX-CUT. 

Let us  define  \textit{the complex elliptope} as follows:
\begin{align}
    \label{eqn_ElliptopeSDP}
    \mathcal{E}^n_m := \left\{ X \in \mathcal{H}^n_+ \, \middle| \, \text{diag}(X) = \mathbf{1}, X_{ij} \in \ConvHull\left (\mathcal{B}_m \right ) \right\}.
\end{align} 
Note that for $m=2$,  $ X \in \mathcal{H}^n_+$ such that $\text{diag}(X) = \mathbf{1}$ implies $X_{ij} \in  [-1,1]$. Thus, the complex elliptope $\mathcal{E}^n_2$ corresponds to the elliptope that is defined by \citeauthor{Laurent1995OnAP}~\cite{Laurent1995OnAP}. 
For $m=3$, the complex elliptope $\mathcal{E}^n_m$ corresponds to the feasible set of the complex SDP relaxation for MAX-3-CUT by~\citeauthor{goemans2001approximation}~\cite{goemans2001approximation}. It is clear that $\cutPolytope \subseteq \mathcal{E}^n_m$. 

Here, we derive strong approximations of $\cutPolytope$ by using SDP. Besides considering second semidefinite liftings, we also derive cuts in the complex plane that separate $\mathcal{E}^n_m$ from $\cutPolytope$. Cuts in the complex plane have recently been studied by \citeauthor{jarre2020set}~\cite{jarre2020set}, for  the set $\cutPolytopeNoSS^n_\infty$, defined as
\begin{align}
    \label{eqn_infiniteCutPolytope}
    \cutPolytopeNoSS^n_\infty := \ConvHull \left\{ xx\conjTrans \, \middle| \, x \in \mathbb{C}^n, | x_i | = 1 \,\, \forall i \in [n] \right\}.
\end{align}
We also define $\mathcal{B}_\infty := \left\{ \exp{( \theta \imagUnit)} \, \middle| \, \theta \in \mathbb{R} \right\}$ as a natural extension of \eqref{eqn_discreteSymbolSet},
and the complex elliptope
\begin{align}
    \label{eqn_infElliptope}
    \mathcal{E}^n_\infty := \left\{ X \in \mathcal{H}^n_+  \, | \, \text{diag}(X) = \mathbf{1}_n \right\}.
\end{align}
Note that for $X\in \mathcal{E}^n_\infty$, we have $X_{ij} \in \ConvHull(\mathcal{B}_\infty)= \{ x \in \mathbb{C} \, | \, | x | \leq 1 \}$. The complex elliptope $\mathcal{E}^n_\infty$ can be considered as the first  semidefinite lifting of $\cutPolytopeNoSS^n_\infty$. Additionally, one can define a second semidefinite lifting of $\cutPolytopeNoSS^n_\infty$, following \cite{lasserre2001global}, and also proposed by \citeauthor{jarre2020set} \cite{jarre2020set} for $n = 4$.

\subsection{Basic CSDP relaxations}
In this section, we present the basic semidefinite program whose feasible set is the complex elliptope $\mathcal{E}^n_m$ for integer $m\geq 2$, see \eqref{eqn_ElliptopeSDP}. 
The basic SDP relaxation for $m=2$ was introduced by 
\citeauthor{GoemansWilliamsonMax2Cut}~\cite{GoemansWilliamsonMax2Cut}, for $m=3$ by
\citeauthor{goemans2001approximation}~\cite{goemans2001approximation}, and for general $m\geq 3$ by \citeauthor{lu2018argument}~\cite{lu2018argument}. 
In the sections that follow, we will derive cuts that strengthen the basic SDP. 
Let $n,m \geq 2$ and $C \in \mathcal{H}^n$. From the definitions of $\cutPolytope$ and $\mathcal{E}^n_m,$ we have 
\begin{align}
\tag{CSDP-P}
    \label{eqn_mimoSDPrelax}
    \max_{x \in \mathcal{B}^n_m} x\conjTrans C x &= \max_{X \in \cutPolytope} \langle C, X \rangle \leq \max_{X \in \mathcal{E}^n_m} \langle C, X \rangle.
\end{align}
Note that the above upper bound (referred to as \ref{eqn_mimoSDPrelax}, with \textit{P} for primal) is computable in polynomial time up to desired accuracy by the interior point method.  The complex elliptope $\mathcal{E}^n_m$ contains positive definite matrices, e.g., the identity.  For $X\in \mathcal{E}^n_m$, we require that $X_{ij} \in \ConvHull(\mathcal{B}_m)$, see \eqref{eqn_discreteSymbolSet}. One way to enforce this is to set
\begin{align}
    X_{ij} = \sum_{k = 1}^m \lambda_k e^{\theta_k \imagUnit}, \text{ with } \sum_{k=1}^m \lambda_k = 1, \, \lambda \geq 0, \, \lambda \in \mathbb{R}^m \text{ and } \theta_k = \frac{2 \pi k}{m} \text{ (i.e., } \exp{(\theta_k \imagUnit)} \in {\mathcal B}_m \text{)}.
\end{align} 
Alternatively, $X_{ij}$ can be restricted to lie in certain half-spaces. This perspective follows from the well-known fact that $\mathbb{C}$ is isomorphic to $\mathbb{R}^2$ via the bijective mapping
\begin{align}
    \label{eqn_bijectiveCtoR}
    g : \mathbb{R}^2 \to \mathbb{C}, \,\, g(a) = a_1 +  a_2 \imagUnit,
\end{align}
and that, for $a, b \in \mathbb{R}^2$, $a^\top b = \mathrm{Re}(\overline{g(a)} g(b))$. Now, it is easy to see that the set $\ConvHull(\mathcal{B}_m)$ is given by an $m$-sided regular convex polygon in $\mathbb{C}$. For the edge connecting $\exp{(\theta_k \imagUnit)}$ and $\exp{( \theta_{k-1} \imagUnit)}$, its normal vector (complex number) is given by $\nu_k := \exp{[ (\theta_k + \theta_{k-1})\imagUnit/2]} = \exp{[ (2k-1) \pi \imagUnit /m]}$ for $k\in [m]$. Thus,
\begin{align}
    \label{eqn_convBineqElement}
    X_{ij} \in \ConvHull(\mathcal{B}_m) \iff \mathrm{Re} \left( \overline{\nu}_k X_{ij} \right ) \leq \cos{\left (\frac{\pi}{m} \right )}\, \quad \forall k \in [m],
\end{align}
see also \cite{lu2018argument}. 

 {\begin{remark}
    The set $\ConvHull(\mathcal{B}_m)$ is the regular $m$-sided polygon, and as \eqref{eqn_convBineqElement} shows, this set can be described by its $m$ facets. A more efficient encoding is derived by viewing $\ConvHull(\mathcal{B}_m)$ as the projection of a higher dimensional polytope with possibly fewer facets. The earliest result in this direction is due to \citeauthor{ben2001polyhedral} \cite{ben2001polyhedral}. They showed that, for $m$ a power of two, $\ConvHull(\mathcal{B}_{m})$ is equivalent to the projection of a polytope with $\mathcal{O}(\log m)$ number of facets. Later results extended this to general $m$ \cite[Theorem 2]{fiorini2012extended}, and projections of higher dimensional spectrahedra \cite{fawzi2017equivariant,fawzi2016sparse}. Since $m$ is small in our numerical experiments, we do not use such liftings of the regular polygon in our CSDP relaxations.
\end{remark}}

To state \eqref{eqn_convBineqElement} in terms of matrix inner products, we define, for  $k \in [m],$  $i,j \in [n]$, $i<j$ the Hermitian matrices
\begin{align}
    \label{eqn_WijkDefinition}
    W^k_{ij} := \frac{1}{2} \left( \nu_k E_{ij}  +  \overline{\nu}_k  E_{ji} \right)
\end{align}
so that $\mathrm{Re}( \overline{\nu}_k X_{ij} ) = \langle W^k_{ij}, X \rangle$.
Now, from the SDP duality theory, it follows that the corresponding dual problem of \ref{eqn_mimoSDPrelax} is given by 
\begin{equation}
\tag{CSDP-D}
\label{eqn_mimoSDPdual}
\begin{aligned} 
    & \text{min } \mathbf{1}^\top \mu + \cos{\left (\frac{\pi}{m} \right )} \sum_{ij \in [n]^2,i<j, k \in [m]} \omega^k_{ij}, \\
    &    \text{s.t. } S = \text{Diag}(\mu) + \sum_{ij \in [n]^2, i < j,k \in [m]} \omega^k_{ij} W^k_{ij} - Q \succeq 0, \\
    &   \hphantom{\text{s.t. }} \mu \in \mathbb{R}^{n},  \omega_{ij} = (\omega^1_{ij}, \ldots, \omega^m_{ij})^\top \in \mathbb{R}^m_+, \quad \forall  i,j \in [n], \, i < j.
\end{aligned}
\end{equation}
One can strengthen \ref{eqn_mimoSDPrelax} and \ref{eqn_mimoSDPdual} (\textit{D} for dual) via the moment and sum of squares hierarchies by \citeauthor{lasserre2001global} \cite{lasserre2001global}. 
We consider this in more detail in \Cref{section_secondLiftingCutInf,section_liftingsOfE3m}, where we consider a second semidefinite lifting of $\cutPolytopeNoSS^4_\infty$ and $\cutPolytopeNoSS^3_m$ for finite $m$, respectively.
In \Cref{section_strFramework} we strengthen  \ref{eqn_mimoSDPrelax} by adding valid cuts to $\mathcal{E}^n_m$, which can be considered as the first semidefinite lifting of $\cutPolytope$.

\section{Framework for finding valid inequalities for \texorpdfstring{$\cutPolytope$}{CUT polytope}}
\label{section_strFramework}
In this section we introduce a general framework to derive valid inequalities for $\cutPolytope$, see \eqref{eqn_cutPolytopeSet}.  These inequalities can be then used to strengthen the SDP  relaxation~\ref{eqn_mimoSDPrelax}.  {We show that these inequalities can be classified in equivalence classes, which we derive from the group structure of $\cutPolytope$.}

{Similar to the classical result by \citeauthor{rockafellar1997convex} \cite[Theorem 18.8]{rockafellar1997convex}, stating that any real closed convex set is the intersection of all its half-spaces containing it, the set $\cutPolytope$ has an equivalent description as follows.}
\begin{proposition}
\label{lemma_alternativeCutPolytope}
     \begin{align}
\label{eqn_alternativeCutPolytope}
    \cutPolytope = \left \{ X \in \mathcal{H}^n \, \middle| \, \langle Q, X \rangle \leq \max_{x \in \mathcal{B}^n_m} x\conjTrans Q x, \,  \forall Q \in \mathcal{H}^n \right \}.
\end{align}
\end{proposition}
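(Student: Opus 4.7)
The plan is to prove the two inclusions separately, viewing $\mathcal{H}^n$ as a real Euclidean space of dimension $n^2$ equipped with the trace inner product $\langle Q, X \rangle = \mathrm{Tr}(QX)$, which is real-valued when both arguments are Hermitian.

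For the inclusion $\cutPolytope \subseteq \mathrm{RHS}$, I would take $X \in \cutPolytope$ and write $X = \sum_i \lambda_i x_i x_i\conjTrans$ as a convex combination with $x_i \in \mathcal{B}^n_m$. Then for any $Q \in \mathcal{H}^n$,
\begin{align*}
    \langle Q, X \rangle = \sum_i \lambda_i \langle Q, x_i x_i\conjTrans \rangle = \sum_i \lambda_i \, x_i\conjTrans Q x_i \leq \max_{x \in \mathcal{B}^n_m} x\conjTrans Q x,
\end{align*}
since each $x_i\conjTrans Q x_i$ is a real number bounded above by the maximum, and the $\lambda_i$'s form a convex combination.

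For the reverse inclusion, I would argue by contradiction using the separating hyperplane theorem. Suppose $X \in \mathcal{H}^n$ lies in the RHS but $X \notin \cutPolytope$. Since $\mathcal{B}^n_m$ is a finite set (of cardinality $m^n$), $\cutPolytope$ is the convex hull of finitely many points in the finite-dimensional real vector space $\mathcal{H}^n$, hence it is compact and convex. By the separating hyperplane theorem, there exist $Q \in \mathcal{H}^n$ and $c \in \mathbb{R}$ such that
\begin{align*}
    \langle Q, X \rangle > c \geq \langle Q, Y \rangle \quad \text{for all } Y \in \cutPolytope.
\end{align*}
Applying this to the extreme points $Y = xx\conjTrans$ with $x \in \mathcal{B}^n_m$ gives $x\conjTrans Q x \leq c$ for all such $x$, hence $\max_{x \in \mathcal{B}^n_m} x\conjTrans Q x \leq c < \langle Q, X \rangle$, contradicting membership of $X$ in the RHS.

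The only mild subtlety, and what I would flag as the main thing to get right, is ensuring that the separation can be realized by a Hermitian matrix $Q$ (rather than a general linear functional on $\mathcal{H}^n$). This is automatic because $\mathcal{H}^n$, equipped with the trace inner product, is a real Hilbert space in which the map $Q \mapsto \langle Q, \cdot \rangle$ provides all continuous real-linear functionals via the Riesz representation. Thus the separating functional can be taken to be of the form $\langle Q, \cdot \rangle$ with $Q \in \mathcal{H}^n$, and the argument closes. No further work is required since compactness of $\cutPolytope$ makes the strict separation immediate without needing to pass to closures.
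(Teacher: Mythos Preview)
Your proof is correct and follows precisely the approach the paper alludes to: the paper omits the proof, noting only that it is analogous to Rockafellar's classical result that a closed convex set equals the intersection of all half-spaces containing it. Your argument is the standard proof of that fact specialized to the real Euclidean space $\mathcal{H}^n$, with the compactness of $\cutPolytope$ (as the convex hull of finitely many points) ensuring the required strict separation; the observation that every real-linear functional on $\mathcal{H}^n$ is represented by some $Q \in \mathcal{H}^n$ via the trace pairing is exactly the point needed to close the argument.
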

{The proof of \Cref{lemma_alternativeCutPolytope} is similar to the proof of \citeauthor{rockafellar1997convex} \cite[Theorem 18.8]{rockafellar1997convex} and therefore omitted.}
Observe that, by Hermiticity of $Q$, the values $\langle Q, X \rangle$ and $\max_{x \in \mathcal{B}^n_m} x\conjTrans Q x$ are real. Therefore,  the inequalites in \eqref{eqn_alternativeCutPolytope} are well defined.

Let us exploit the formulation of  $\cutPolytope$ given by~\eqref{eqn_alternativeCutPolytope} for deriving cuts that can be added to \ref{eqn_mimoSDPrelax} in order to 
improve that relaxation.  
We define the function $\strFunc: \mathcal{H}^n \backslash {0} \times \mathbb{N} \to [1,\infty)$ ($\strFunc$ for strength), as follows:
    \begin{align}
        \label{eqn_strFunctionDef}
        \strFunc(Q,m) := \frac{\max_{X \in \mathcal{E}^n_m} \langle Q,X \rangle} {\max_{X \in \cutPolytope} \langle Q,X \rangle}.
    \end{align}
\BlueUpdate{
We assume w.l.o.g.~that both the numerator and denominator in fraction \eqref{eqn_strFunctionDef} are strictly positive. This is the case for all here considered matrices $Q$. 
Clearly, for a matrix $Q$ for which ${\max_{X \in \cutPolytope} \langle Q,X \rangle} \leq 0$, one can  appropriately adjust its  diagonal.
}

Observe that $\strFunc$ returns the approximation ratio of maximization over $\elliptope$ and maximization over $\cutPolytope$, for a specific problem instance given by $Q$ (see also \cite[Section 4]{Laurent1995OnAP}). Since $\max_{X \in \mathcal{E}^n_m} \langle Q,X \rangle$ is an upper bound for  $\max_{X \in \cutPolytope} \langle Q,X \rangle$ \BlueUpdate{(both these values are assumed to be strictly positive)}, we have that $\strFunc(Q,m) \geq 1$. To improve the quality of this upper bound, one can find  valid inequalities for $\cutPolytope$, that are violated by $\argmax_{X \in \mathcal{E}^n_m} \langle Q,X \rangle$. Thus, if $\strFunc(Q,m)>1$, then by adding the cut 
\begin{align}
\label{deriveCuts}
 \langle Q,X \rangle \leq \max_{X \in \cutPolytope} \langle Q,X \rangle,
\end{align}
to~\ref{eqn_mimoSDPrelax} one may strengthen that  relaxation. Note that it is, in general,  $\mathcal{NP}$-hard to compute $\strFunc(Q,m)$. However, for some $Q$ we can find optimal solutions of both maximization problems in \eqref{eqn_strFunctionDef} analytically, and thus evaluate $\strFunc(Q,m)$, see Section~\ref{generalized triangle inequality}. 

\begin{remark}
    \label{remark_fairScaling}
    For any $c \in \mathbb{R}^n_+$,  $1 \leq \strFunc(Q + \text{Diag}(c),m) \leq  \strFunc(Q,m)$. In order to fairly compare cuts, we consider matrices $Q$ that satisfy $\langle Q, I \rangle = 0$.
\end{remark}

 {\subsection{Symmetries of \texorpdfstring{$\cutPolytope$}{CUT n,m}}
\label{section_groupStructure}
In this section, we formally specify symmetries of the set $\cutPolytope$, for finite $m$, that follow from the underlying group structure of $\mathcal{B}_m$.
These symmetries will be exploited later in   \Cref{section_ROCequivalence} and \Cref{section_exactDescriptionCut33}.

As starting point, consider the set $\mathcal{B}_m$ defined in \eqref{eqn_discreteSymbolSet}. With the usual multiplication of complex numbers, $\mathcal{B}_m$ forms a cyclic group of order $m$ with identity element $1$. Set $\mathcal{B}^n_m$ with the Hadamard product forms a finite abelian group, with identity element $\mathbf{1}_n$. Indeed, if $x \in \mathcal{B}^n_m$, its inverse element is given by $\overline{x}$, since $\overline{x} \Hadamard x = \mathbf{1}_n$.

We define, for $\alpha \in \mathcal{B}^n_m$, the linear group action of $f_{\alpha}  : \mathcal{H}^n \to \mathcal{H}^n$ as $f_{\alpha}(Z) =  (\alpha \alpha\conjTrans) \Hadamard Z  = \text{Diag}(\alpha) \, Z \, \text{Diag}(\overline{\alpha})$. To see that $f_\alpha$ defines a group action, let $Z \in \mathcal{H}^n$ and note that $f_{\mathbf{1}_n}(Z) = J_n \Hadamard Z = Z$, and 
\begin{align}
    \label{eqn_groupActionAxiom}
    f_{\alpha}(f_\beta(Z)) = \left( \alpha \alpha\conjTrans \right) \Hadamard \left( \beta \beta\conjTrans \right) \Hadamard Z = \left( \alpha \Hadamard \beta \right) \left( \alpha \Hadamard \beta \right)\conjTrans \Hadamard Z = f_{\alpha \Hadamard \beta}(Z).    
\end{align}
Since $f_\alpha$ defines a group action, $f_\alpha$ is also invertible, with inverse $f_\alpha^{-1} = f_{\overline{\alpha}}$, which follows from \eqref{eqn_groupActionAxiom} by taking $\beta = \overline{\alpha}$. That is, $f_{\alpha}(f_{\overline{\alpha}}(Z)) = f_{\alpha \Hadamard \overline{\alpha}}(Z) = f_{\mathbf{1}_n}(Z) = Z$.

Note that for $Z, Z^\prime \in \mathcal{H}^n$, we have
\begin{align}
    \label{eqn_invariantProd}
    \langle f_\alpha(Z), f_\alpha(Z^\prime) \rangle = \langle Z, Z^\prime \rangle.
\end{align}
Informally, function $f_\alpha$ can be considered as acting on $\mathcal{H}^n$ by applying some rotation to the elements of its input matrix. The number of such rotations is equal to the number of different functions $f_\alpha$. This number is given by $m^{n-1}$, and not $|\mathcal{B}^n_m| = m^n$, because $f_{\alpha} = f_{x \alpha}$ for any $x \in \mathcal{B}_m$ and $\alpha \in \mathcal{B}^n_m$. Therefore, in the context of $f_\alpha$, one element of $\alpha$ can be assumed fixed, say $\alpha_1 = 1$.

The sets $\cutPolytope$ and $\elliptope$ 
are closed under the group action $f_\alpha$. To see this for  $\cutPolytope$, note that $f_\alpha(xx\conjTrans) \in \cutPolytope$ for $x \in \mathcal{B}^n_m$. Therefore, the action $f_\alpha$ on a convex combination of rank one matrices in $\cutPolytope$ returns a convex combination of (possibly different) rank one matrices in $\cutPolytope$. To see that $\elliptope$ is also closed under $f_\alpha$, note that the Hadamard product of positive semidefinite matrices is again positive semidefinite, due to the well-known Schur Product Theorem. Moreover, the elements of $f_\alpha(X)$, $X \in \elliptope$, are contained in $\ConvHull\left( \mathcal{B}_m \right)$, since $\ConvHull\left( \mathcal{B}_m \right)$ itself is also closed under the Hadamard product. Specifically,
$(f_\alpha(X))_{ij} = X_{ij} \alpha_i \overline{\alpha}_j \in \ConvHull\left( \mathcal{B}_m \right)$.

\begin{remark}
    We have not explicitly covered the symmetries of $\cutPolytopeNoSS^n_\infty$, see \eqref{eqn_infiniteCutPolytope}. However, the group structures of $\mathcal{B}_\infty$ and $\mathcal{B}^n_\infty$  are similar to those of $\mathcal{B}_m$ and $\mathcal{B}^n_m$ for $m$ finite, and therefore do not warrant special consideration. In particular, $\mathcal{B}^n_\infty$ with the Hadamard product is abelian like $\mathcal{B}^n_m$, although its order is infinite, in contrast to $\mathcal{B}^n_m$. Also the group action $f_\alpha$ for $\alpha \in \mathcal{B}^n_\infty$ is defined similarly as $f_\alpha$ for $\alpha \in \mathcal{B}^n_m$, and there is an infinite number of such group actions.
\end{remark}}

\subsection{Classes of valid inequalities}
\label{section_ROCequivalence}
We show here that the strength of a valid inequality, generated by $Q \in \mathcal{H}_n$, is invariant under rotation of elements in $Q$,  i.e., $f_\alpha(Q)$, and taking the conjugate of $Q$. Thus, each $Q$ in \eqref{deriveCuts} induces a class of valid inequalities.

Consider, for $\cutPolytopeNoSS^n_2$, the triangle inequalities \cite{laurent1996graphic}, given by
\begin{align}
    \label{eqn_realTriangleIneq}
    c_1 X_{ij}+c_2 X_{ik} + c_3 X_{jk} \geq -1, \,\, c \in \{ \pm 1 \}^3, \,\, c_1c_2c_3 = 1.
\end{align}
There are four ways to choose the vector $c$, and we say that triangle inequalities induced by different $c$ are equivalent under rotation of coefficients (\textit{ROC equivalent}). We generalize the notion of ROC equivalence to $\cutPolytope$, $m \geq 2$, see also \cite{jarre2020set}, {by using the symmetries of $\cutPolytope$, as outlined in \Cref{section_groupStructure}.}

\begin{lemma}
    \label{lemma_ROCresult}
        Let  $m, n\geq 2$ be integer numbers,
        $Q \in \mathcal{H}^n$, and $\alpha \in \mathcal{B}^n_m$. Then
        \begin{align}
            \normalfont{\texttt{str}}(Q,m) = \normalfont{\texttt{str}}(Q \Hadamard (\alpha \alpha\conjTrans),m),
        \end{align}
        see \eqref{eqn_strFunctionDef}.
        We say that the cuts induced by $Q$ and $Q \Hadamard (\alpha \alpha\conjTrans)$ are ROC equivalent.
    \end{lemma}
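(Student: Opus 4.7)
The plan is to exhibit a common bijective transformation on both $\mathcal{E}^n_m$ and $\cutPolytope$ that relates the objective $\langle Q, X \rangle$ to $\langle Q \Hadamard (\alpha \alpha\conjTrans), Y \rangle$, so that the numerator and denominator of \eqref{eqn_strFunctionDef} are each individually preserved. Let $D := \text{Diag}(\alpha)$; since $|\alpha_i| = 1$ for all $i \in [n]$, the matrix $D$ is unitary. Note the identity $M \Hadamard (\alpha \alpha\conjTrans) = D M D\conjTrans$ for any $M \in \mathcal{H}^n$, because $(D M D\conjTrans)_{ij} = \alpha_i M_{ij} \overline{\alpha_j} = M_{ij} (\alpha \alpha\conjTrans)_{ij}$. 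Define the map $T_\alpha(X) := D X D\conjTrans$; its inverse is clearly $T_{\bar\alpha}$.

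Next I would verify $T_\alpha(\elliptope) = \elliptope$ and $T_\alpha(\cutPolytope) = \cutPolytope$. Preservation of Hermiticity and positive semidefiniteness follows from $T_\alpha$ being a unitary congruence. The diagonal is preserved because $\alpha_i \overline{\alpha_i} = 1$. For the off-diagonal constraint $X_{ij} \in \ConvHull(\mathcal{B}_m)$, observe that $(T_\alpha(X))_{ij} = \alpha_i \overline{\alpha_j} X_{ij}$, and $\alpha_i \overline{\alpha_j} \in \mathcal{B}_m$ since $\mathcal{B}_m$ is a group under multiplication; moreover multiplication by any element of $\mathcal{B}_m$ is a rotation by a multiple of $2\pi/m$, which is a symmetry of the regular $m$-gon $\ConvHull(\mathcal{B}_m)$, hence $\alpha_i \overline{\alpha_j} X_{ij} \in \ConvHull(\mathcal{B}_m)$. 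Therefore $T_\alpha(\elliptope) \subseteq \elliptope$, and equality follows from invertibility. For $\cutPolytope$, it suffices by convexity to check that $T_\alpha(xx\conjTrans) = (Dx)(Dx)\conjTrans$ with $Dx \in \mathcal{B}^n_m$ (again using closure of $\mathcal{B}_m$ under multiplication), so $T_\alpha$ maps extreme points of $\cutPolytope$ to extreme points.

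The inner product identity is a short trace computation: using unitarity $D\conjTrans D = I$,
\begin{align*}
\langle Q \Hadamard (\alpha \alpha\conjTrans),\, T_\alpha(X) \rangle
= \text{Tr}\bigl(D Q D\conjTrans \cdot D X D\conjTrans\bigr)
= \text{Tr}\bigl(D Q X D\conjTrans\bigr)
= \text{Tr}(Q X)
= \langle Q, X \rangle.
\end{align*}
Combining this with the bijectivity of $T_\alpha$ on each of $\elliptope$ and $\cutPolytope$ gives
$\max_{X \in \elliptope} \langle Q, X \rangle = \max_{Y \in \elliptope} \langle Q \Hadamard (\alpha \alpha\conjTrans), Y \rangle$, and likewise with $\cutPolytope$ in place of $\elliptope$. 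Taking the ratio yields $\strFunc(Q,m) = \strFunc(Q \Hadamard (\alpha \alpha\conjTrans), m)$.

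The only delicate step is the verification that $X_{ij} \in \ConvHull(\mathcal{B}_m)$ is preserved under multiplication by $\alpha_i \overline{\alpha_j}$; but as noted, this reduces to the fact that the regular $m$-gon $\ConvHull(\mathcal{B}_m)$ is invariant under rotations by multiples of $2\pi/m$, which is immediate. Every other step is a routine application of unitary-congruence invariance.
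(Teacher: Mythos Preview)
Your proof is correct and uses the same core transformation as the paper, namely $T_\alpha(X) = \mathrm{Diag}(\alpha)\,X\,\mathrm{Diag}(\bar\alpha) = X \Hadamard (\alpha\alpha\conjTrans)$. The only difference is in how the numerator equality is certified: the paper pairs the primal identity $\langle f_\alpha(Q), f_\alpha(X)\rangle = \langle Q, X\rangle$ with a transformation of the dual solution $(\mu,\omega,S)$ of \ref{eqn_mimoSDPdual} (using $f_\alpha(W^k_{ij}) = W^{k'}_{ij}$) to match optimal values, whereas you argue purely on the primal side by showing $T_\alpha$ is a bijection of $\elliptope$ onto itself. Your route is slightly more elementary and self-contained; the paper's dual argument has the side benefit of making the action on the facet-inequality multipliers $\omega^k_{ij}$ explicit, which is useful later when tracking ROC-equivalent cuts, but is not needed for the lemma as stated.
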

    \begin{proof}
     {It follows from \eqref{eqn_invariantProd} that
       \begin{align}
        \label{eqn_groupTransformProp}
           {\max_{X \in \cutPolytope} \langle Q,X \rangle} ={\max_{X \in \cutPolytope} \langle f_{\alpha}(Q),f_{\alpha}(X) \rangle} = {\max_{X \in \cutPolytope} \langle f_{\alpha}(Q),X \rangle},
       \end{align}
       where the last inequality is due to  the fact that $\cutPolytope$ is closed under the action of $f_{\alpha}$. Likewise, $\elliptope$ is also closed under the action of $f_{\alpha}$, as shown in \Cref{section_groupStructure}. Therefore, \eqref{eqn_groupTransformProp} also holds when replacing $\cutPolytope$ by $\elliptope$.} Thus, by definition of the function \texttt{str}, the lemma follows.
    \end{proof}
      
We provide an explicit example of such an ROC transformation. Let $Q$ and $X$ be Hermitian matrices of order $n$, with $\text{diag}(Q) = \mathbf{0}$. Then,
\begin{align}
    \label{eqn_innerProdMatWorkedOut}
    \langle Q, X \rangle = 2 \sum_{ij \in [n]^2, i < j} \mathrm{Re} \Big( \overline{Q_{ij}}X_{ij} \Big).
\end{align}
Using $(Q \Hadamard (\alpha \alpha\conjTrans) )_{ij} =  Q_{ij} \alpha_i \overline{\alpha_j}$ for $\alpha \in \mathcal{B}^n_m$, it is easy to see how the Hadamard product  transforms~\eqref{eqn_innerProdMatWorkedOut}. However, we simplify  {notation} by considering $(\alpha_0, \alpha_1,\ldots, \alpha_{n-1})^\top \in \mathcal{B}^n_m$ and $\beta = \alpha_0 (1,\alpha_1,\ldots,\alpha_{n-1})^\top \in \mathcal{B}^n_m$. Note that the first column of $\beta \beta\conjTrans$ is given by $\begin{bmatrix}
    1 & \alpha_1 & \ldots & \alpha_{n-1}
\end{bmatrix}^\top$, so that
\begin{align}
    \label{eqn_innerProdBetaHada}
    \frac{1}{2} \langle Q \Hadamard (\beta \beta \conjTrans), X \rangle = \mathrm{Re} \left [ \sum_{j=2}^n  \overline{Q_{1j}} \alpha_{j-1}  X_{1j} + \sum_{ij \in [n]^2, 1<i < j} {\overline{Q_{ij}}\overline{\alpha_{i-1}} }\alpha_{j-1}  X_{ij} \right].
\end{align}
We exploit the above equality to derive the ROC equivalent inequalities in the next section. The following lemma shows that one can also consider the conjugate of matrix $Q$ without changing the strength of the corresponding valid inequality, resulting in {\em the conjugate equivalent} inequality.
\begin{lemma}
\label{lemma_ROCresult2}
       Let  $m, n\geq 2$ be integer numbers and $Q \in \mathcal{H}^n$. Then       
      $\normalfont{\texttt{str}}(Q,m) = \normalfont{\texttt{str}}(\overline{Q},m).$        
\end{lemma}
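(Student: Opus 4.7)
The plan is to show that complex conjugation is a symmetry of both the numerator and denominator in the definition of $\strFunc(Q,m)$, and hence sends the ratio to itself.

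The starting observation is that whenever $Q, X \in \mathcal{H}^n$ we have $\langle Q, X \rangle \in \mathbb{R}$, so taking the complex conjugate of both sides of $\langle Q, X\rangle = \text{Tr}(QX)$ yields
\begin{align*}
    \langle Q, X \rangle \;=\; \overline{\langle Q, X\rangle} \;=\; \overline{\text{Tr}(QX)} \;=\; \text{Tr}(\overline{Q}\,\overline{X}) \;=\; \langle \overline{Q}, \overline{X} \rangle.
\end{align*}

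Next I would show that both feasible sets are closed under entrywise conjugation, i.e.\ $X \in \cutPolytope \iff \overline{X} \in \cutPolytope$ and analogously for $\elliptope$. For $\cutPolytope$, the set $\mathcal{B}_m$ is itself closed under conjugation since $\overline{\exp(2\pi \imagUnit k/m)} = \exp(2\pi \imagUnit (m-k)/m) \in \mathcal{B}_m$; moreover $\overline{xx\conjTrans} = \overline{x}(\overline{x})\conjTrans$, so conjugating permutes the extreme generators of $\cutPolytope$. For $\elliptope$, conjugation preserves Hermiticity, leaves the (real) diagonal untouched, keeps entries inside $\ConvHull(\mathcal{B}_m)$ (since $\mathcal{B}_m$ itself is conjugation-closed), and preserves positive semidefiniteness since for Hermitian $X$ one has $\overline{X} = X^\top$, which remains PSD.

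Combining the two ingredients gives, via the substitution $Y = \overline{X}$,
\begin{align*}
    \max_{X \in \elliptope} \langle Q, X \rangle \;=\; \max_{X \in \elliptope} \langle \overline{Q}, \overline{X} \rangle \;=\; \max_{Y \in \elliptope} \langle \overline{Q}, Y \rangle,
\end{align*}
and identically for the maximum over $\cutPolytope$. Dividing the two equalities yields $\strFunc(Q,m) = \strFunc(\overline{Q},m)$. The argument is essentially a bookkeeping exercise, so there is no real obstacle; the only point requiring care is verifying the closure of $\elliptope$ under conjugation, in particular noticing that the conjugate of a Hermitian PSD matrix equals its transpose and therefore remains Hermitian PSD.
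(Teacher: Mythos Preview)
Your proof is correct and follows essentially the same approach as the paper: both arguments exploit that conjugation (equivalently, for Hermitian matrices, transposition) is an involution preserving the inner product and mapping each feasible set to itself. The paper phrases it via $\overline{Q}=Q^\top$ and handles the denominator through the vector substitution $z=\overline{x}$, whereas you treat both maxima uniformly via $\langle Q,X\rangle=\langle\overline{Q},\overline{X}\rangle$; the content is the same.
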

\begin{proof}
 {For $Z\in {\mathcal H}^n$, the complex conjugate function $Z \to \overline{Z}$ is 
conjugate-linear, invertible, and satisfies an equation similar to \eqref{eqn_invariantProd}, i.e., $\langle \overline{Z}, \overline{Z^\prime} \rangle = \langle Z, Z^\prime \rangle$. Thus, \eqref{eqn_groupTransformProp} is also valid when $f_{\alpha}$ is replaced with the complex conjugate function. Hence, the same arguments that prove \Cref{lemma_ROCresult} also prove \Cref{lemma_ROCresult2}.}
\end{proof}

\begin{example}[MAX-3-CUT]
\label{ex_max3cut}
The maximum-three-cut problem (MAX-3-CUT) is to  partition the vertex set of a graph into $3$ subsets such that the total weight of edges joining different sets is maximized. MAX-3-CUT can be modeled using $\cutPolytopeNoSS^n_3$ as noted by  \citeauthor{goemans2001approximation} \cite{goemans2001approximation}. 
The same authors also derived a  complex SDP relaxation for MAX-3-CUT whose feasible set is $\mathcal{E}^n_3$, see \eqref{eqn_ElliptopeSDP}.

To model MAX-3-CUT on some graph $G = (V,E)$, $|V| = n$, we may associate to each vertex $i \in V$ a variable $x_i \in \mathcal{B}_3$, see \eqref{eqn_discreteSymbolSet}. The value of any variable assignment (i.e., cut) equals the number of edges $\{i,j\} \in E$ for which $x_i \neq x_j$. Note that, if $x_i \neq x_j$, then $\overline{x_i} x_j \in \mathcal{B}_3 \setminus \{ 1 \}$. Since $\{ \mathrm{Re}(z) \, | \, z \in \mathcal{B}_3 \setminus \{ 1 \} \} = -1/2$, we have
\begin{align}
    \label{eqn_triangleStrengthE}
    \frac{2}{3} \mathrm{Re}(1-\overline{x_i} x_j) = \begin{cases}
        1, \text{ if } x_i \neq x_j \\
        0, \text{ else.}
    \end{cases}
\end{align}
Thus, for a graph $G$, the value of the cut induced by $x \in \mathcal{B}^n_3$ is given as follows
\begin{align}
    \label{eqn_complexCutValue}
    v(G,x) = \frac{2}{3} \sum_{\{i,j\} \in E} \mathrm{Re}(1-\overline{x_i} x_j).
\end{align}
For  the complete graph of order 4, denoted by $K_4$, it is not difficult  to verify that $v(K_4,x) \in \{0,3,4,5\}$ for all $x \in \mathcal{B}^4_3$. That is, any 3-cut of $K_4$ cuts either $0$, $3$, $4$ or $5$ edges. By rewriting \eqref{eqn_complexCutValue} for $G = K_4$, we find
\begin{align}
    \sum_{i < j} \mathrm{Re}( \overline{x_i} x_j) = 6 - \frac{3}{2} v(K_4, x) \in \Big\{0, \pm\frac{3}{2},6 \Big\}.
\end{align}
Therefore, the inequality $\mathrm{Re}(X_{ij}+X_{ik}+X_{i\ell}+X_{jk}+X_{j \ell}+X_{\ell k}) \geq -3/2$ is valid for $\cutPolytopeNoSS^n_3$, along with its ROC equivalent inequalities. 
We show in the next section  that this inequality  is not implied by $\mathcal{E}^n_3$, by proving that the strength of the inequality is positive. 
\end{example}

\subsection{Generalized complex triangle and quadrangle inequalities}  
\label{generalized triangle inequality}
In this section, we first generalize the gap inequalities   \cite{laurent1996gap} from $\cutPolytopeNoSS^n_2$ to $\cutPolytopeNoSS^n_m$, with $m > 2$ integer.
Then, we derive some valid inequalities for $\cutPolytope$ for different values of $m$ by exploiting \eqref{deriveCuts},  and compute their strength. In particular,  we show that the generalized complex triangle and  complex quadrangle inequalities may strengthen $\cutPolytope$ for finite $m \geq 2$.

To derive the gap inequalities  from \cite{laurent1996gap}, we set
\begin{align}
    \label{eqn_gammaSigmaDef}
    \gamma(b) := \min_{x \in \{ \pm 1 \}^n} | b^\top x | ~~\text{ and } ~~\sigma(b) := \sum_{i \in [n]}{b_i},
\end{align}
for any $b \in \mathbb{R}^n$, and $B = bb^\top - \mathrm{Diag}\left( b_1^2,\ldots, b_n^2 \right)$. If the context is clear, we omit $b$ in $\gamma(b)$ and $\sigma(b)$. The gap inequality is then defined as
\begin{align}
    \label{eqn_gapLaurent}
    \langle B, X \rangle \geq 2 \sum_{1 \leq i < j \leq n} b_i b_j + \gamma^2 - \sigma^2 \quad \forall X \in \cutPolytopeNoSS^n_2.
\end{align}
Note that \citeauthor{laurent1996gap} \cite{laurent1996gap} define the gap inequality in terms of $\{0,1\}$ variables, rather than $\{ \pm 1 \}$, which explains the discrepancy between \eqref{eqn_gapLaurent} and the gap inequality presented in \cite{laurent1996gap}. We generalize the above inequality to $\mathbb{C}$ in the following lemma.
\begin{lemma}
\label{lemma_gapIneqGeneralized}
Let $b \in \mathbb{C}^n$, and set $B = bb\conjTrans - \mathrm{Diag}\left( |b_1|^2,\ldots, |b_n|^2 \right)$. Then, for 
\begin{align}
    \gamma(b) := \min_{x \in \mathcal{B}^n_m} | b\conjTrans x |,    
\end{align}
and $\sigma(b)$ as in \eqref{eqn_gammaSigmaDef}, we have
\begin{align}\label{complexGap}
    \min_{X \in \cutPolytopeNoSS^n_m} \langle B, X \rangle =2 \, \mathrm{Re} \left( \sum_{1 \leq i < j \leq n} b_i \overline{b}_j \right)+ \gamma^2 - \sigma \overline{\sigma}.
\end{align}
\end{lemma}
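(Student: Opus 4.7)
The plan is to reduce the minimization over the polytope $\cutPolytopeNoSS^n_m$ to a minimization over its extreme points $\mathcal{B}^n_m$, use $|x_i|=1$ to simplify the quadratic form, and then identify the resulting expression with the right-hand side of \eqref{complexGap} via a direct expansion of $|\sigma|^2$.

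First I would observe that since the objective $X \mapsto \langle B, X\rangle$ is linear and $\cutPolytopeNoSS^n_m = \ConvHull\{xx\conjTrans : x\in \mathcal{B}^n_m\}$, its minimum is attained at an extreme point, so
\begin{align*}
\min_{X \in \cutPolytopeNoSS^n_m} \langle B, X \rangle \;=\; \min_{x \in \mathcal{B}^n_m} x\conjTrans B x.
\end{align*}
Next, decomposing $B = bb\conjTrans - \mathrm{Diag}(|b_1|^2,\ldots,|b_n|^2)$ and using that $|x_i|=1$ for every $x \in \mathcal{B}^n_m$, a direct calculation yields
\begin{align*}
x\conjTrans B x \;=\; |b\conjTrans x|^2 - \sum_{i\in [n]} |b_i|^2 |x_i|^2 \;=\; |b\conjTrans x|^2 - \sum_{i \in [n]} |b_i|^2.
\end{align*}
The second term is independent of $x$, so taking the minimum over $\mathcal{B}^n_m$ gives $\gamma(b)^2 - \sum_{i}|b_i|^2$ by definition of $\gamma(b)$.

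It then remains to match this with the claimed expression $2\,\mathrm{Re}\bigl(\sum_{i<j} b_i \overline{b}_j\bigr) + \gamma^2 - \sigma\overline{\sigma}$. For this I would expand
\begin{align*}
\sigma \overline{\sigma} \;=\; \Big|\sum_{i \in [n]} b_i \Big|^2 \;=\; \sum_{i \in [n]} |b_i|^2 + 2\,\mathrm{Re}\Big(\sum_{1 \le i < j \le n} b_i \overline{b}_j\Big),
\end{align*}
which rearranges to $-\sum_i |b_i|^2 = 2\,\mathrm{Re}\bigl(\sum_{i<j} b_i\overline{b}_j\bigr) - \sigma\overline{\sigma}$, and substituting into the previous display closes the identity.

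I do not anticipate a real obstacle here; the only subtlety is book-keeping: making sure the $|x_i|=1$ reduction is applied only to the diagonal term (which makes it constant) while the off-diagonal part is precisely the quadratic form whose minimum modulus defines $\gamma(b)$, and then recognizing $\sum_i |b_i|^2$ as $|\sigma|^2$ minus twice the real part of the cross terms. Since the statement is an equality rather than an inequality, I would also briefly note that the minimum is indeed attained (the feasible set $\mathcal{B}^n_m$ is finite), so the extremum-at-vertex argument is rigorous.
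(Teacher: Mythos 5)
Your proof is correct and takes essentially the same route as the paper: both rely on the minimum of a linear functional over $\cutPolytopeNoSS^n_m$ being attained at an extreme point $xx\conjTrans$, both use the decomposition $B = bb\conjTrans - \mathrm{Diag}(|b_1|^2,\ldots,|b_n|^2)$ together with $|x_i|=1$ (equivalently $\mathrm{diag}(X)=\mathbf{1}$) to isolate $\gamma^2$ and $\|b\|^2$, and both finish with the identity $\|b\|^2 = \sigma\overline{\sigma} - 2\,\mathrm{Re}\bigl(\sum_{i<j}b_i\overline{b}_j\bigr)$. The paper states the chain starting from $\gamma^2$ while you start from the left-hand side, but the underlying calculation is identical.
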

\begin{proof} The result follows from the fact that 
    $\gamma^2 = \min_{X \in \cutPolytopeNoSS^n_m} \left\langle bb\conjTrans, X \right\rangle = \min_{X \in \cutPolytopeNoSS^n_m} \left\langle B, X \right\rangle + \| b \|^2$, and $\| b \|^2 = \sigma \overline{\sigma} - 2 \, \mathrm{Re}  \left( \sum_{1 \leq i < j \leq n} b_i \overline{b}_j \right)$.
\end{proof}
We use \Cref{lemma_gapIneqGeneralized} also to prove the following result.

\begin{proposition}
    \label{lemma_completeGraphFourCut}
    Let $m \geq 2$, $n\in \{3,4\}$, $Q_n =  I_n - J_n$. Then
    \begin{align}
        &\max_{X \in \mathcal{E}^n_m} \langle Q_n, X \rangle = n \text{ and } \nonumber \\
        &\max_{X \in \cutPolytope} \langle Q_n, X \rangle = \begin{cases}
            -4 \cos{\left( \frac{2 \lfloor m/3 \rceil \pi}{m}\right)}-2\cos{\left(\frac{4 \lfloor m/3 \rceil \pi}{m}\right)} &\text{ if } n = 3 \text{ and } m \text{ not a multiple of } 3, \\ 
            -2- 8\cos{ \left( \frac{2 \lfloor m/2 \rfloor \pi}{m} \right)} - 2\cos{ \left(\frac{4 \lfloor m /2 \rfloor \pi}{m} \right)} &\text{ if } n = 4 \text{ and } m \text{ odd}, \label{eqn_minCutValue} \\
            n &\text{ else}.
        \end{cases} 
    \end{align}
\end{proposition}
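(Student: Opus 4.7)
The plan is to evaluate the two maxima separately, and for the cut polytope side to reduce it to the discrete quantity $\gamma := \min_{x\in\mathcal{B}^n_m}|\sum_ix_i|$. For the elliptope, using $\mathrm{diag}(X)=\mathbf{1}$ one has $\langle Q_n,X\rangle = \mathrm{Tr}(X) - \mathbf{1}^\top X\mathbf{1} = n - \mathbf{1}^\top X\mathbf{1} \leq n$, since $X\succeq 0$ forces $\mathbf{1}^\top X\mathbf{1}\geq 0$. Equality is achieved at $\widetilde X := \tfrac{n}{n-1}I_n - \tfrac{1}{n-1}J_n$, which is PSD (eigenvalues $0$ and $\tfrac{n}{n-1}$), has diagonal $\mathbf{1}$, and off-diagonal entry $-\tfrac{1}{n-1}\in\ConvHull(\mathcal{B}_m)$ for all $n\in\{3,4\}$ and $m\geq 2$; thus $\widetilde X\in\mathcal{E}^n_m$ and the first maximum equals $n$.

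For the cut polytope, the direct identity $x\conjTrans Q_n x = \|x\|^2 - |\mathbf{1}^\top x|^2 = n - |\sum_ix_i|^2$ for $x\in\mathcal{B}^n_m$ gives $\max_{X\in\cutPolytope}\langle Q_n,X\rangle = n-\gamma^2$; equivalently this follows from \Cref{lemma_gapIneqGeneralized} with $b=\mathbf{1}_n$, for which $B=J_n-I_n=-Q_n$, $\sigma(b)=n$, and $2\,\mathrm{Re}\sum_{i<j}b_i\overline{b_j}=n(n-1)$. The ``else'' cases are then immediate by exhibiting exact cancellations in $\mathcal{B}^n_m$: the triple of cube roots $(1,e^{2\pi\imagUnit/3},e^{4\pi\imagUnit/3})\in\mathcal{B}^3_m$ for $3\mid m$, and $(1,-1,1,-1)\in\mathcal{B}^4_m$ for $m$ even. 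In both, $\gamma=0$ and the cut polytope maximum equals $n$.

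For the two nontrivial cases I first exhibit a near-optimal configuration and then argue optimality. Write $\omega_m := e^{2\pi\imagUnit/m}$. If $n=3$ and $3\nmid m$, set $k=\lfloor m/3\rceil$ and $x=(1,\omega_m^k,\omega_m^{-k})$: then $|\sum x_i|^2 = (1+2\cos\tfrac{2\pi k}{m})^2 = 3 + 4\cos\tfrac{2\pi k}{m} + 2\cos\tfrac{4\pi k}{m}$, giving the first case of the formula via $n-\gamma^2$. If $n=4$ and $m$ is odd, set $k=\lfloor m/2\rfloor$ and $x=(1,1,\omega_m^k,\omega_m^{-k})$ (so $\omega_m^{\pm k}$ are the two $m$-th roots closest to $-1$): then $|\sum x_i|^2 = (2+2\cos\tfrac{2\pi k}{m})^2 = 6+8\cos\tfrac{2\pi k}{m}+2\cos\tfrac{4\pi k}{m}$, giving the second case.

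The main obstacle is the matching lower bound $\gamma^2 \geq$ the exhibited value. I plan to use \Cref{lemma_ROCresult} and~\Cref{lemma_ROCresult2} to fix $x_1=1$ and (for $n=4$) to restrict to configurations symmetric under coordinate-wise complex conjugation, thereby reducing the problem to a one- or two-parameter trigonometric inequality on the grid $(2\pi/m)\mathbb{Z}$. Then I would compare this discrete objective to its smooth continuous extension---a function with a unique up-to-symmetry minimum at the cube roots of unity for $n=3$, resp.\ antipodal pairs $\{\pm 1\}$ for $n=4$---and use convexity of this extension near its minimum to identify the optimal lattice point as the one exhibited above. The delicate step will be ruling out ``spread'' discrete configurations that do not mirror the continuous symmetric optimum.
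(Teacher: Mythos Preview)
Your reduction of both maxima is exactly the paper's: the same witness $\widetilde X=(nI_n-J_n)/(n-1)$ for the elliptope (you are in fact slightly more careful than the paper in checking $-\tfrac{1}{n-1}\in\ConvHull(\mathcal B_m)$), the same identity $\max_{\cutPolytope}\langle Q_n,\cdot\rangle=n-\gamma^2$ via \Cref{lemma_gapIneqGeneralized} with $b=\mathbf 1_n$, and the same candidate minimizers $(1,\omega_m^k,\omega_m^{-k})$ and $(1,1,\omega_m^k,\omega_m^{-k})$.

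The divergence is only in establishing that these candidates are optimal. The paper does not argue this itself; it invokes geometric results of Myerson on small sums of roots of unity, which identify $\min_{x\in\mathcal B_m^n}\lvert\sum_i x_i\rvert$ for $n=3,4$ exactly as stated.

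Your plan for a self-contained argument has a gap. Invariance of $\lvert\sum_ix_i\rvert$ under a global phase does let you fix $x_1=1$, but invariance under conjugation (which is all that \Cref{lemma_ROCresult2} yields here, since $Q_n$ is real) only shows that the \emph{set} of minimizers is closed under $x\mapsto\bar x$. It does not allow you to restrict attention to configurations fixed by conjugation: a conjugate pair of minimizers need not contain a self-conjugate point. So the step ``for $n=4$, restrict to configurations symmetric under coordinate-wise complex conjugation'' is unjustified, and with it the collapse to a one- or two-parameter trigonometric problem. The subsequent ``convexity near the continuous minimum plus nearest lattice point'' heuristic is also precarious for $n=4$, because the continuous zero locus of $\sum_ix_i$ on $(\mathcal B_\infty)^4$ modulo global rotation is two-dimensional, and picking out the correct grid point requires exactly the kind of angular-gap analysis that Myerson carries out. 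If you want to avoid the citation, you will need that direct geometric case analysis rather than a symmetry shortcut.
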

\begin{proof}
For any $Y \in \elliptope$, the value $\langle Q_n, Y \rangle$ provides a lower bound on $\max_{X \in \elliptope} \langle Q_n, X \rangle $. Specifically for $Y =(nI_n -J_n)/(n-1)$, we have $\max_{X \in \elliptope} \langle Q_n, X \rangle \geq     \langle Q_n, Y \rangle = n$. Moreover, we have for all $X \in \elliptope$, $\langle Q_n, X \rangle = n - \langle J_n, X \rangle \leq n$, since $J_n \succeq 0$. Thus $\max_{X \in \elliptope} \langle Q_n, X \rangle = n$.

    For optimization over $\cutPolytope$, note that $(-Q_n)= \mathbf{1}_n \mathbf{1}\conjTrans_n - \text{Diag}(\mathbf{1}_n)$, and we may apply \Cref{lemma_gapIneqGeneralized}, for $b = \mathbf{1}_n$. Consequently, $\sigma(\mathbf{1}_n) = n$, and
    \begin{align}
        \label{eqn_inequalityExpression}
        \max_{X \in \cutPolytope} \langle Q_n, X \rangle = -\min_{X \in \cutPolytope} \langle -Q_n, X \rangle =n^2 - 2 \binom{n}{2}  - \gamma(\mathbf{1}_n)^2  = n- \min_{x \in \mathcal{B}^n_m} |\mathbf{1}\conjTrans x |^2.
    \end{align}
    It remains to determine $\gamma(\mathbf{1}_n) = \min_{x \in \mathcal{B}^n_m} | \mathbf{1}\conjTrans x|$. It is clear that when $n =3$ and $m$ a multiple of 3, or $n=4$ and $m$ even, $\gamma(\mathbf{1}) = 0$ (since then there exist $n$ $m$th roots of unity that sum to 0).

     For $n = 3$ and $m$ not a multiple of 3, geometric arguments from  \cite{myerson1986small} show that the optimal value is attained for $x^* = (1,z,\bar{z})^\top$, where $z = \exp{(  \frac{2 \lfloor m/3 \rceil \pi}{m}\imagUnit)}$. Then,
    \begin{align}
        \gamma(\mathbf{1}_3)^2 = |\mathbf{1}\conjTrans_3 x^*|^2 = \left(1+2 \cos{\left( \frac{2 \lfloor m/3 \rceil \pi}{m} \right) }\right)^2 = 3 +   4 \cos{\left( \frac{2 \lfloor m/3 \rceil \pi}{m}\right)}+2\cos{\left(\frac{4 \lfloor m/3 \rceil \pi}{m}\right)},
    \end{align}
    and the result follows from substitution in \eqref{eqn_inequalityExpression}.

    For $n = 4$ and $m$ odd, similar geometric arguments from \cite{myerson1986small} show that the minimizer of $\gamma(\mathbf{1}_4)$ is given by $x^* = (1,1,z,\bar{z})^\top$, where $z = \exp{(\frac{2 \lfloor m/2 \rfloor \pi}{m} \imagUnit)}$. Using this to compute $\gamma(\mathbf{1}_4)^2$, and substituting the result in \eqref{eqn_inequalityExpression} yields the proof.
\end{proof}

The coefficients of these valid inequalities can be multiplied by elements from $\mathcal{B}^n_m$  without altering their strength, see  \Cref{lemma_ROCresult}. Let us present these ROC equivalent inequalities explicitly below.
\begin{corollary}\label{ROCgeneralizedtriangle}
Let $m \geq 2$, $n\in \{3,4\}$, $Q_n = I_n - J_n$.
    For $n = 3$, the ROC equivalent inequalities of the inequality induced by \Cref{lemma_completeGraphFourCut} read
    \begin{align}
        \label{eqn_ROC_triangleIneq}
        -2 \,\, \mathrm{Re}(\alpha_1 X_{12} + \alpha_2 X_{13} + \overline{\alpha_1} \alpha_2 X_{23}) \leq \max_{X \in \cutPolytopeNoSS^3_m} \langle Q_3, X \rangle,
    \end{align}
    where $\alpha \in \mathcal{B}^2_m$.  For $n = 4$, we have the following  ROC equivalent inequalities
    \begin{align}
        \label{eqn_complexQuadrangleIneq}
       -2 \,\, \mathrm{Re}(\alpha_1 X_{12} + \alpha_2 X_{13} + \alpha_3 X_{14} + \overline{\alpha_1} \alpha_2 X_{23} + \overline{\alpha_1} \alpha_3 X_{24} + \overline{\alpha_2} \alpha_3 X_{34}) \leq \max_{X \in \cutPolytopeNoSS^4_m} \langle Q_4, X \rangle,
    \end{align}
    where $\alpha \in \mathcal{B}^3_m$. Lastly, $\strFunc(Q_n,m) > 1$, see \eqref{eqn_strFunctionDef}, if and only if $\mathrm{gcd}(n,m) = 1$.
\end{corollary}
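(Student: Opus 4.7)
The strategy is to combine the ROC-invariance machinery (Lemma \ref{lemma_ROCresult} and the identity \eqref{eqn_innerProdBetaHada}) with the value formulas from Proposition \ref{lemma_completeGraphFourCut}. The two inequality families arise from mechanically applying \eqref{eqn_innerProdBetaHada}, and the strength characterization is then essentially a case inspection of Proposition \ref{lemma_completeGraphFourCut}.

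For the explicit ROC equivalent inequalities, note that $Q_n = I_n - J_n$ has zero diagonal and all off-diagonal entries equal to $-1$, so $\overline{Q_{ij}} = -1$ for $i<j$. I would plug $Q_n$ into \eqref{eqn_innerProdBetaHada} with $\beta = \alpha_0(1,\alpha_1,\ldots,\alpha_{n-1})^\top \in \mathcal{B}^n_m$, upon which the $\overline{Q_{ij}}$ factors collapse to $-1$ and the identity reduces to
\[
\tfrac{1}{2}\langle Q_n \Hadamard (\beta\beta\conjTrans), X\rangle = -\mathrm{Re}\!\left[\sum_{j=2}^n \alpha_{j-1} X_{1j} + \sum_{1<i<j}\overline{\alpha_{i-1}}\alpha_{j-1}X_{ij}\right].
\]
Specialising to $n=3$ and $n=4$ (and multiplying by $2$) gives precisely the left-hand sides of \eqref{eqn_ROC_triangleIneq} and \eqref{eqn_complexQuadrangleIneq}. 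The corresponding right-hand side is identical to $\max_{X\in\cutPolytope}\langle Q_n, X\rangle$, because Lemma \ref{lemma_ROCresult} ensures that Hadamard multiplication by $\beta\beta\conjTrans$ preserves both the elliptope maximum and the cut-polytope maximum (whose value is then supplied by Proposition \ref{lemma_completeGraphFourCut}).

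For the last claim, I would use Proposition \ref{lemma_completeGraphFourCut} to write $\strFunc(Q_n,m) = n / \max_{X\in\cutPolytope}\langle Q_n, X\rangle$, so that $\strFunc(Q_n,m) > 1$ is equivalent to the denominator being strictly smaller than $n$. The case split in Proposition \ref{lemma_completeGraphFourCut} already distinguishes the two regimes: the denominator equals $n$ exactly in the ``else'' branch, namely $n = 3$ with $3 \mid m$ or $n = 4$ with $m$ even; the remaining cases correspond precisely to $\gcd(n,m) = 1$.

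The only non-routine point is to verify that the closed-form expressions in \eqref{eqn_minCutValue} are \emph{strictly} less than $n$ when $\gcd(n,m)=1$, which is where I expect the main bookkeeping to lie. Equivalently, since the proof of Proposition \ref{lemma_completeGraphFourCut} shows $\max_{X\in\cutPolytope}\langle Q_n, X\rangle = n - \gamma(\mathbf{1}_n)^2$, this reduces to showing $\gamma(\mathbf{1}_n) > 0$ whenever $\gcd(n,m)=1$. From the same proof, $\gamma(\mathbf{1}_3) = |1+2\cos(2\lfloor m/3\rceil\pi/m)|$ and $\gamma(\mathbf{1}_4) = 2|1+\cos(2\lfloor m/2\rfloor\pi/m)|$; the first vanishes only when $\lfloor m/3\rceil/m = 1/3$ (i.e.\ $3\mid m$) and the second only when $\lfloor m/2\rfloor/m = 1/2$ (i.e.\ $m$ even). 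In both cases non-vanishing is equivalent to $\gcd(n,m)=1$, which closes both directions of the biconditional and completes the proof.
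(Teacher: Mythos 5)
Your proof is correct and follows essentially the same route as the paper: derive the explicit inequalities by substituting $Q_n = I_n - J_n$ into \eqref{eqn_innerProdBetaHada}, then use Proposition~\ref{lemma_completeGraphFourCut} to settle the strength claim. The one genuinely nicer wrinkle is the final step: where the paper verifies the case $n=3$, $m\equiv 1\pmod 3$ by writing $\max_{X\in\cutPolytope}\langle Q_3,X\rangle$ as a concave quadratic in $\cos(z_m)$ and then appeals to ``follows similarly'' for the other cases, you reduce uniformly to the identity $\max_{X\in\cutPolytope}\langle Q_n,X\rangle = n-\gamma(\mathbf{1}_n)^2$ from the proof of Proposition~\ref{lemma_completeGraphFourCut}, so the biconditional becomes exactly the question of when $\gamma(\mathbf{1}_n)=0$, and both directions fall out at once. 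That is a cleaner organization of the same computation. One small point worth making explicit: your characterizations ``$\gamma(\mathbf{1}_3)=0$ iff $\lfloor m/3\rceil/m = 1/3$'' and ``$\gamma(\mathbf{1}_4)=0$ iff $\lfloor m/2\rfloor/m = 1/2$'' tacitly use that the angles $2\lfloor m/3\rceil\pi/m$ and $2\lfloor m/2\rfloor\pi/m$ lie in $(0,\pi]$ for $m\geq 2$ (so $\cos\theta = -1/2$ forces $\theta = 2\pi/3$ and $\cos\theta = -1$ forces $\theta=\pi$). That is easy to check from $1\leq\lfloor m/3\rceil\leq m/2$ and $1\leq\lfloor m/2\rfloor\leq m/2$, but it should be stated.
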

\begin{proof}
The inequalities \eqref{eqn_ROC_triangleIneq} and \eqref{eqn_complexQuadrangleIneq} are obtained from 
\eqref{deriveCuts} and \eqref{eqn_innerProdBetaHada} where $Q:= I_n - J_n$.

To show that $\texttt{str}$ is positive whenever $\mathrm{gcd}(n,m) = 1$, we consider again separate cases. Let first $n = 3$ and $m \equiv 1 \mod 3$. Along with the earlier assumption that $m \geq 2$, this implies that $m \geq 4$. Then $\lfloor m/3 \rceil = (m-1)/3$. Substituting this in \eqref{eqn_minCutValue} for $n=3$, and using that $\cos{(2z)} = 2\cos^2{(z)}-1$, we find
    \begin{align}
         \max_{X \in \cutPolytopeNoSS^3_m} \langle Q_3, X \rangle = 2-4 \cos{(z_m)} - 4 \cos^2{(z_m)} := g(m), \text{ for } z_m = \frac{2(m-1)\pi}{3m} \text{ and } m \equiv 1 \mod 3.
    \end{align}
    Observe that $g(m)$ is a concave quadratic function in $\cos{(z_m)}$ that is maximized for $\cos{(z_m)} = -1/2 \Rightarrow z_m = 2 \pi /3 + 2k \pi$, $k \in \mathbb{Z}$. The maximum equals $3$, but 
    \begin{align}
        m \geq 4 \text{ and } m \equiv 1 \mod 3 ~~\Rightarrow~~  \cos{(z_m)} \neq \cos{\left( \frac{2 \pi}{3} \right)}.
    \end{align}
    Hence, the maximum value of $3$ is not attained for finite $m \geq 4$ in case $m \equiv 1 \mod 3$. Thus, for $m \equiv 1 \mod 3$,  $\max_{X \in \cutPolytopeNoSS^3_m} \langle Q_3, X \rangle < 3$, which proves that the strength of the corresponding inequality is strictly greater than 1. The proof for other values of $n$ and $m$ follows similarly.
\end{proof}

Thus, the inequalities given by \Cref{ROCgeneralizedtriangle} separate $\mathcal{E}^n_m$ from $\cutPolytope$ only when gcd$(n,m) = 1$. The strength of these inequalities is greater for smaller values of $m$, as in the limit to infinity, the optimal value of the discrete programming problem in \Cref{lemma_completeGraphFourCut} equals $n$.
For numerical evaluation of the strength of these inequalities, see \Cref{table_strTable} in \Cref{section:StrengthOfCursNumeric}.
Note that the inequalities from \Cref{ex_max3cut} can be also derived from \Cref{lemma_completeGraphFourCut} for $n=4$ and $m=3$.

Let us highlight \Cref{lemma_completeGraphFourCut} for the real case, i.e., for $m = 2$. Considering $n=3$, the expressions in \Cref{lemma_completeGraphFourCut} then provide
\begin{align}
    \max_{X \in \cutPolytopeNoSS^3_2}  \langle Q_3, X \rangle = 2,
\end{align}
and since $\mathcal{B}_2 = \{ \pm 1\}$, the inequalities \eqref{eqn_ROC_triangleIneq} then reduce to the well-known triangle inequalities \eqref{eqn_realTriangleIneq} (after appropriate scaling). Hence, the inequalities \eqref{eqn_ROC_triangleIneq} may be considered as {\em generalized complex triangle inequalities}.

Similarly, the inequalities \eqref{eqn_complexQuadrangleIneq} for $n = 4$ can be considered as {\em complex quadrangle inequalities}. For the real case, $m = 2$, we have that $\text{gcd}(n,m) = \text{gcd}(4,2)=2 > 1$. Thus, the quadrangle inequalities are  implied by $\mathcal{E}^4_2$. This clarifies why in the real case, the triangle, pentagonal, heptagonal (etc.) inequalities are well-known, in contrast to real quadrangle inequalities.
Note that real triangle, pentagonal, heptagonal, etc.,  inequalities belong to the family of hypermetric inequalities that are considered as a special case of the gap inequalities \eqref{eqn_gapLaurent}.

 {
The inequalities derived in \Cref{ROCgeneralizedtriangle} are valid for $n \in \{3,4\}$, and therefore can be  applied to the principal submatrices of matrices in $\elliptopeNoSS_{m}^{\widetilde{n}}$ for $\widetilde{n} > 4$.
Thus, the complex triangle and quadrangle inequalities apply to all $n$.
We present this idea more formally in the next section, see \eqref{eqn_triangleSetE}, and exploit it in \Cref{section_numericalResults}.
}
    
\section{An exact description of \texorpdfstring{$\cutPolytopeNoSS^3_3$}{CUT33}}
\label{section_exactDescriptionCut33}
We study $\cutPolytopeNoSS^3_3$ by studying the set
\begin{align}
    \label{eqn_upperTriuPart}
    \mathcal{V}\left( \cutPolytopeNoSS^3_3 \right) := \left\{ x \in \mathbb{C}^3 \,\, \middle| \,\, 
        \begin{bmatrix}
1 & x_1 & x_2 \\
\overline{x_1} & 1 & x_3 \\
\overline{x_2} & \overline{x_3} & 1 
\end{bmatrix} \in \cutPolytopeNoSS^3_3 \right\}.
\end{align}
We define the sets $\mathcal{V}\left( \cutPolytope \right)$, in the reminder of the paper, analogously. It is clear that there exists a bijection between the sets $\mathcal{V}\left( \cutPolytopeNoSS^3_3 \right)$ and $ \cutPolytopeNoSS^3_3$. Since $ \mathcal{V}(\cutPolytopeNoSS^3_3)$ is small, we can tractably compute its facets. 

We first require some intermediate lemmas.
\begin{proposition}
\label{lemma_strengthOfFacet}
    The inequality
    \begin{align}
        \label{eqn_cut33facet}       
        \mathrm{Re} \left( \imagUnit x_1 + e^{\pi \imagUnit/6} x_2 + \imagUnit x_3 \right) \leq \frac{\sqrt{3}}{2},
    \end{align}
    is facet defining for $\mathcal{V}(\cutPolytopeNoSS^3_3)$. Additionally, the three linear inequalities that ensure $x_i \in \ConvHull(\mathcal{B}_3)$ for $i\in[3]$, see \eqref{eqn_convBineqElement},  are also facet-defining.   
    
    The strength of inequality \eqref{eqn_cut33facet} equals $\frac{\sqrt{3}\cos\left(\frac{\pi}{18}\right)}{\cos\left(\frac{\pi}{9}\right)}
 \approx  ~ 1.81521.$
\end{proposition}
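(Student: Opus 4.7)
The plan is to establish three claims in turn: facetness of \eqref{eqn_cut33facet}, facetness of the three linear triangle inequalities ensuring $x_i\in\ConvHull(\mathcal{B}_3)$, and the explicit value of the strength. I would first enumerate the extreme points of $\mathcal{V}(\cutPolytopeNoSS^3_3)$: since $yy\conjTrans$ is invariant under multiplication of $y$ by any unit modulus scalar, fixing $y_1=1$ yields exactly $9$ distinct vertices. Representing these in real coordinates $(\mathrm{Re}(x_i),\mathrm{Im}(x_i))_{i=1}^{3}\in\mathbb{R}^6$ and row-reducing the $8$ displacement vectors from one fixed vertex shows that they span $\mathbb{R}^6$, so $\mathcal{V}(\cutPolytopeNoSS^3_3)$ is $6$-dimensional.

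For facetness of \eqref{eqn_cut33facet}, I would evaluate the left-hand side at each of the $9$ vertices. Exactly six attain the value $\sqrt{3}/2$ (the remaining three attain $-\sqrt{3}$), proving validity and identifying the tight vertices. Facetness then reduces to verifying that the $5$ displacement vectors between these tight points are linearly independent in $\mathbb{R}^6$, which can be certified by exhibiting a nonzero $5\times 5$ minor. For each inequality $\mathrm{Re}(\overline{\nu}_k x_i)\leq\cos(\pi/3)$ from \eqref{eqn_convBineqElement}, the tight vertices are again the six having $x_i$ on the corresponding edge of $\ConvHull(\mathcal{B}_3)$, and the analogous linear-independence check applies. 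By \Cref{lemma_ROCresult} and the coordinate-permutation symmetry of $\mathcal{V}(\cutPolytopeNoSS^3_3)$, verifying a single triple $(i,k)$ suffices for all nine linear inequalities.

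For the strength, \eqref{eqn_cut33facet} corresponds to the Hermitian matrix $Q$ with zero diagonal and $Q_{12}=Q_{23}=-\imagUnit/2$, $Q_{13}=e^{-\pi\imagUnit/6}/2$, so the denominator of $\strFunc(Q,3)$ equals $\sqrt{3}/2$ by the previous step. For the numerator $\max_{X\in\elliptopeNoSS^3_3}\langle Q,X\rangle$, note that both $\elliptopeNoSS^3_3$ and $\langle Q,\cdot\rangle$ are invariant under composing complex conjugation with the coordinate swap $1\leftrightarrow 3$; averaging an optimum over this two-element orbit yields an optimum satisfying $X_{12}=X_{23}=:a$ and $X_{13}=:b$, reducing the CSDP to a two-variable problem. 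Exploiting the geometry of the objective and the triangle constraints, the optimum is attained with both $a$ and $b$ on the edge of $\ConvHull(\mathcal{B}_3)$ joining $1$ and $e^{-2\pi\imagUnit/3}$; the problem then collapses to a one-parameter scalar maximization whose KKT conditions produce a cubic equation solvable in closed form via a Chebyshev-type substitution $s=\tfrac{2}{\sqrt{3}}\cos\theta$, and standard sum-to-product identities turn the resulting optimal value into $\tfrac{3\cos(\pi/18)}{2\cos(\pi/9)}$. Dividing by $\sqrt{3}/2$ produces the claimed strength.

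The main obstacle is justifying that both $a$ and $b$ lie on the specified edge of $\ConvHull(\mathcal{B}_3)$ at the optimum. This can be done by a case analysis over the active-facet configurations of the two triangle constraints, but a cleaner route is to construct an explicit dual-feasible triple $(\mu,\omega,S)$ in \ref{eqn_mimoSDPdual} attaining the value $\tfrac{3\cos(\pi/18)}{2\cos(\pi/9)}$, which certifies optimality via weak duality and obviates the case analysis entirely.
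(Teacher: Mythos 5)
Your facetness argument is essentially the one in the paper, and your account is slightly more complete: the paper simply exhibits six tight vertices and asserts affine independence, whereas you first observe there are exactly nine vertices (parametrized by $(y_2,y_3)\in\mathcal{B}_3^2$ after fixing $y_1=1$), verify full-dimensionality, and note that the objective in \eqref{eqn_cut33facet} takes value $\sqrt{3}/2$ at six of them and $-\sqrt{3}$ at the other three, which establishes validity and tightness simultaneously. (Your explicit-numbers check is correct; I recomputed $-\sin\theta_1+\cos(\theta_2+\pi/6)-\sin(\theta_2-\theta_1)$ on the nine pairs and the split is indeed $6$ at $\sqrt{3}/2$ and $3$ at $-\sqrt{3}$, matching the paper's list.) Reducing the nine box inequalities $\mathrm{Re}(\overline{\nu}_k x_i)\le 1/2$ to one by ROC equivalence and the permutation-plus-conjugation symmetry of $\mathcal{V}(\cutPolytopeNoSS^3_3)$ is also fine. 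For the strength, your symmetry reduction (averaging over conjugation composed with the swap $1\leftrightarrow 3$ to force $X_{12}=X_{23}$) is a genuinely different first step from the paper, and it is legitimate; but as you yourself flag, the key remaining claim --- that the optimal $a$ and $b$ both sit on one prescribed edge of $\ConvHull(\mathcal{B}_3)$ --- is precisely the nontrivial part, and even once $a,b$ lie on an edge the residual problem has two real parameters, not one, so the ``one-parameter cubic/Chebyshev'' step needs a further reduction you do not supply. The paper sidesteps this entirely by exhibiting an explicit feasible $X$ for the lower bound and an explicit dual-type decomposition $\langle Q,X\rangle=\tfrac{3\cos(\pi/18)}{2\cos(\pi/9)}-q\langle M,X\rangle-(\sqrt{3}-\tfrac{2q}{r})\sum_{i<j}\mathrm{Re}(\tfrac12-X_{ij})$ with $M\succeq 0$ rank one --- exactly the ``cleaner route'' you suggest in your last sentence, so on the strength claim you converge to the paper's method rather than offering a complete alternative. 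One small point of bookkeeping: the Hermitian matrix you write ($Q_{12}=Q_{23}=-\imagUnit/2$, $Q_{13}=e^{-\pi\imagUnit/6}/2$) is the complex conjugate of the $Q$ in \eqref{eqn_QcutDefinition}; both represent facet \eqref{eqn_cut33facet} because $\strFunc$ is conjugation-invariant (\Cref{lemma_ROCresult2}), and this is why the edge of $\ConvHull(\mathcal{B}_3)$ you name ($1$ to $e^{-2\pi\imagUnit/3}$) is the conjugate of the one appearing in the paper's explicit optimizer.
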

\begin{proof}
    We consider $\mathcal{V}(\cutPolytopeNoSS^3_3)$ as a real space of dimension 6. 
    \BlueUpdate{Consider the six vectors $z_{\theta} := (e^{\theta_1 \imagUnit}, e^{\theta_2 \imagUnit}, e^{(\theta_2-\theta_1)\imagUnit})^\top$, 
   where $\theta=(\theta_1,\theta_2)$  and
    \begin{align}
        \theta \in  \left\{ \left(0,0 \right), \left(\frac{2 \pi}{3},0 \right), \left(\frac{4 \pi}{3},0 \right), \left(0,\frac{4 \pi}{3} \right), \left(\frac{4 \pi}{3},\frac{2 \pi}{3} \right), \left(\frac{4 \pi}{3},\frac{4  \pi}{3} \right) \right\}.       
      \end{align}
     These six vectors satisfy the following properties: each $z_{\theta}$ corresponds to the upper triangular entries of
    \begin{align}
        Z_{\theta} := \begin{bmatrix}
            1 \\ e^{-\theta_1 \imagUnit} \\ e^{-\theta_2 \imagUnit}
        \end{bmatrix} \begin{bmatrix}
            1 \\ e^{-\theta_1 \imagUnit} \\ e^{-\theta_2 \imagUnit}
        \end{bmatrix}\conjTrans \in \cutPolytopeNoSS^3_3,
    \end{align}
    as in \eqref{eqn_upperTriuPart}, and therefore, $z_\theta \in \mathcal{V}\left( \cutPolytopeNoSS^3_3 \right)$. Moreover, since $Z_\theta$ is an extreme point of $\cutPolytopeNoSS^3_3$, $z_\theta$ is an extreme point of $\mathcal{V}\left( \cutPolytopeNoSS^3_3 \right)$. Additionally, all the six vectors $z_\theta$ satisfy \eqref{eqn_cut33facet} with equality.

    Let us now consider the six real vectors $\widetilde{z}_\theta := \begin{bmatrix}
            \mathrm{Re}(z_\theta)^\top & \mathrm{Im}(z_\theta)^\top 
        \end{bmatrix}^\top \in \mathbb{R}^6$.}    
    It is not difficult to verify that \BlueUpdate{the $\widetilde{z}_\theta$} are affinely independent.  This fact, together with the fact that all extreme points of $\mathcal{V}(\cutPolytopeNoSS^3_3)$ satisfy  the inequality \eqref{eqn_cut33facet}, implies that \eqref{eqn_cut33facet} is facet-defining. The proof that $x_i \in \ConvHull(\mathcal{B}_3)$ induces three facets follows similarly.

    For computing the strength of the inequality, let $Q$ be the unique Hermitian matrix corresponding to \eqref{eqn_cut33facet}, given by
    \begin{align}
        \label{eqn_QcutDefinition}
        Q = \frac{1}{2} \begin{bmatrix}
            0 & \imagUnit & e^{\pi \imagUnit / 6} \\
            -\imagUnit & 0 & \imagUnit \\
            e^{-\pi \imagUnit / 6} & -\imagUnit & 0
        \end{bmatrix}.
    \end{align}
    We can show that $\max_{X \in \mathcal{E}^3_3} \langle Q, X \rangle =    \frac{3\cos\left(\frac{\pi}{18}\right)}{2\cos\left(\frac{\pi}{9}\right)}$, see 
   \Cref{lemma_maxValueOverE}. By complete enumeration, we obtain 
   $\max_{X \in \cutPolytopeNoSS^3_3} \langle Q, X \rangle = \sqrt{3}/2$, which proves the result.
\end{proof}

\begin{remark}
    \label{remark_facetCut4}
    By similar arguments, one can also show that the cut from \Cref{lemma_completeGraphFourCut}, for $n = 3$ and $m = 4$, is facet-defining for $\mathcal{V}(\cutPolytopeNoSS^3_4)$.
\end{remark}
\begin{lemma}
The ROC equivalent inequalities (see \Cref{lemma_ROCresult})  and the conjugate equivalent inequalities (see \Cref{lemma_ROCresult2}) of facet-defining inequalities of $\mathcal{V}(\cutPolytope)$, are again facet-defining.
\end{lemma}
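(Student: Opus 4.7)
The plan is to reduce the statement to the elementary fact that a bijective affine automorphism of a polytope must map facets to facets. Concretely, I would show that both the ROC transformation and complex conjugation induce such automorphisms on $\cutPolytope$, and then transfer the facet-defining property across these bijections.

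For the ROC part, I would reuse the map $f_\alpha(X) = \mathrm{Diag}(\alpha) X \, \mathrm{Diag}(\overline{\alpha}) = X \Hadamard (\alpha \alpha\conjTrans)$ introduced in the proof of \Cref{lemma_ROCresult}. Three properties suffice: (i) $f_\alpha$ is linear with inverse $f_{\overline{\alpha}}$, since $\mathrm{Diag}(\alpha)^{-1} = \mathrm{Diag}(\overline{\alpha})$; (ii) $f_\alpha$ preserves the diagonal, since $|\alpha_i|^2 = 1$; and (iii) $f_\alpha$ permutes the extreme points of $\cutPolytope$, because $f_\alpha(x x\conjTrans) = (\alpha \Hadamard x)(\alpha \Hadamard x)\conjTrans$ and $\mathcal{B}^n_m$ is closed under Hadamard product with any $\alpha \in \mathcal{B}^n_m$. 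Consequently $f_\alpha$ restricts to a bijective affine automorphism of $\cutPolytope$ and, through the obvious identification, of $\mathcal{V}(\cutPolytope)$.

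For the conjugation part I would argue similarly: the map $X \mapsto \overline{X}$ is a bijective real-linear involution on $\mathcal{H}^n$ that preserves diagonals, and $\overline{x x\conjTrans} = \overline{x}\, \overline{x}\conjTrans$ with $\overline{x} \in \mathcal{B}^n_m$, so it too is a bijective affine automorphism of $\cutPolytope$ and of $\mathcal{V}(\cutPolytope)$.

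Having established both automorphisms, the facet-preservation step is standard. Suppose $\langle Q, X \rangle \leq c$ defines a facet $F$ of $\mathcal{V}(\cutPolytope)$, i.e.\ there exist $d := \dim \mathcal{V}(\cutPolytope)$ affinely independent extreme points of $\mathcal{V}(\cutPolytope)$ on $\{\langle Q, X\rangle = c\}$. Applying $f_\alpha$ (resp.\ conjugation) to these extreme points produces $d$ affinely independent extreme points, by bijectivity and affinity. Using cyclicity of the trace, one has $\langle f_\alpha(Q), Y\rangle = \langle Q, f_{\overline{\alpha}}(Y)\rangle$ and $\langle \overline{Q}, Y\rangle = \overline{\langle Q, \overline{Y}\rangle} = \langle Q, \overline{Y}\rangle$ (the latter being real by Hermiticity), so the images of these $d$ points lie on the hyperplane corresponding to the transformed matrix $Q \Hadamard (\alpha\alpha\conjTrans)$, respectively $\overline{Q}$. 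Since this transformed inequality is valid for $\mathcal{V}(\cutPolytope)$ by \Cref{lemma_ROCresult} and \Cref{lemma_ROCresult2} (both preserve the maximum over $\cutPolytope$, hence in particular validity), the $d$ affinely independent tight extreme points show it is facet-defining. I do not foresee any real obstacle here, as the argument is essentially a bookkeeping exercise once the bijectivity of $f_\alpha$ and conjugation on $\cutPolytope$ is recorded.
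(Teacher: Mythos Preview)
Your proposal is correct and follows essentially the same approach as the paper. The paper also takes affinely independent tight vertices of the original facet, transports them through the transformation, and checks that affine independence survives; it just does this more concretely, writing the ROC action on $\mathcal{V}(\cutPolytope)$ as left-multiplication by the invertible diagonal matrix $\mathrm{Diag}(\alpha_1\overline{\alpha}_2,\alpha_1\overline{\alpha}_3,\ldots,\alpha_{n-1}\overline{\alpha}_n)$, whereas you package the same computation as ``$f_\alpha$ and conjugation are bijective real-affine automorphisms of $\cutPolytope$, hence map facets to facets.''
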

\begin{proof}
    Let $g(x) \leq c$, $c \in \mathbb{R}$, be a facet-defining inequality for $\mathcal{V}(\cutPolytope) \subseteq \mathbb{C}^{\binom{n}{2}}$. Then  there exist vectors $y^j \in \mathcal{V}(\cutPolytope)$, $j \in [2n]$, that satisfy $g(y^j) = c$ and are affinely independent over the reals. That is,
    \begin{align}
        \label{eqn_affIndepSyst}
    \begin{bmatrix}
        1 & 1 & \cdots & 1 \\
        y^1 & y^2 & \cdots & y^{2n}
    \end{bmatrix} v = \mathbf{0}, \, v \in \mathbb{R}^{2n} \iff v = \mathbf{0}.
    \end{align}
    Additionally, for each such $y^j$, there exists a $Y^j \in \cutPolytope$ such that the vector $y^j$ corresponds to the upper triangular entries of $Y^j$. Let us slightly abuse the notation of \eqref{eqn_upperTriuPart}, and write this relation  as $\mathcal{V}(Y^j) = y^j$, where the linear function $\mathcal{V} : \cutPolytope \to \mathbb{C}^{\binom{n}{2}}$ returns the upper triangular entries of its input matrix.  {We can write $g(x) \leq c$ in terms of matrices as $\langle G, X \rangle \leq c$ for some $G\in {\mathcal H}^n$, and $X \in \cutPolytope$. By construction, 
    \begin{align}
        \label{eqn_facetBeforeTransform}
        \langle G, Y^j \rangle = c.
    \end{align}
    Recall now the symmetries of $\cutPolytope$ (and therefore also $\mathcal{V}(\cutPolytope)$), as outlined in \Cref{section_groupStructure}. In particular, recall the group action $f_{\alpha}(Z) = Z \Hadamard (\alpha \alpha\conjTrans)$. Denote by $\widetilde{g}(x) \leq c$ the inequality that is ROC equivalent with $g(x) \leq c$, following a rotation with some $\alpha \in \mathcal{B}^n_m$. This inequality may be written as $\langle f_\alpha(G), X \rangle \leq c$ for $X \in \cutPolytope$. From \eqref{eqn_invariantProd} and \eqref{eqn_facetBeforeTransform},
 we have $\left\langle f_\alpha(G), f_{\alpha}\left(Y^j \right) \right\rangle = c$.}
   
    Therefore, the vectors $\widetilde{y}^j := \mathcal{V}\left( f_\alpha \left(Y^j \right) \right) \in \mathcal{V}(\cutPolytope)$ satisfy $\widetilde{g}(\widetilde{y}^j) = c$. Note that
    \begin{align}
        \label{eqn_yTildeTransform}
        \widetilde{y}^j = \text{Diag}(\alpha_1 \overline{\alpha}_2, \alpha_1 \overline{\alpha}_3, \ldots, \alpha_{n-1} \overline{\alpha}_n) y^j.
    \end{align}
    Using \eqref{eqn_affIndepSyst} and \eqref{eqn_yTildeTransform}, it follows that the vectors $\widetilde{y}^j$ are also affinely independent, since
    \begin{align}
        \begin{bmatrix}
        1 &  \cdots & 1 \\
        \widetilde{y}^1  & \cdots & \widetilde{y}^{2n}
    \end{bmatrix}v = \text{Diag}(1, \alpha_1 \overline{\alpha}_2, \alpha_1 \overline{\alpha}_3, \ldots, \alpha_{n-1} \overline{\alpha}_n) \begin{bmatrix}
        1 &  \cdots & 1 \\
        y^1  & \cdots & y^{2n}
    \end{bmatrix} v = \mathbf{0}, \, v \in \mathbb{R}^{2n} \iff v = \mathbf{0}.
    \end{align}
    Hence, the result follows. The proof for conjugate equivalent inequalities is similar.
\end{proof}
Let $F$ denote the number of facets of $\mathcal{V}(\cutPolytopeNoSS^3_3)$.
Note that  \eqref{eqn_cut33facet} has 9 ROC equivalent inequalities (counting itself), see \eqref{eqn_innerProdBetaHada}, and its conjugate equivalent inequality also has 9 ROC equivalent inequalities (counting itself). Moreover, each of  the three linear inequalities that ensure $x_i \in \ConvHull(\mathcal{B}_3)$ for $i\in[3]$ has 3 ROC equivalent inequalities (counting itself). Thus,
\begin{align}
    \label{eqn_fLowerB}
    F \geq 18+9 = 27.
\end{align}
We are now ready to show that these 27 inequalities fully describe the set $\mathcal{V}(\cutPolytopeNoSS^3_3)$.
\begin{theorem} 
\label{ThmCut33}
    The set $\mathcal{V}(\cutPolytopeNoSS^3_3)$ admits the following linear description:
\begin{align}
    \label{eqn_facetDefining}
     \mathcal{V}(\cutPolytopeNoSS^3_3) = \Set{ x \in \mathbb{C}^3 | \begin{array}{l} x \in \ConvHull(\mathcal{B}^3_3), \, \mathrm{Re}( \eta x ) \leq \frac{\sqrt{3}}{2}, \mathrm{Re}( \overline{\eta}x ) \leq \frac{\sqrt{3}}{2}, \\[0.2cm]
         \eta = \left( \alpha_1e^{\pi \imagUnit/2}, \alpha_2e^{\pi \imagUnit/6},\overline{\alpha_1} \alpha_2 e^{ \pi \imagUnit/2} \right), \, \alpha \in \mathcal{B}^2_3.
       \end{array}}.
\end{align}
\end{theorem}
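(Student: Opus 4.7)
The plan is to prove two inclusions for the polytope $P$ defined by the $27$ inequalities appearing on the right-hand side of \eqref{eqn_facetDefining}. The inclusion $\mathcal{V}(\cutPolytopeNoSS^3_3) \subseteq P$ is immediate: the membership $x \in \ConvHull(\mathcal{B}^3_3)$ already holds in $\mathcal{V}(\elliptopeNoSS^3_3)$, and the $18$ complex inequalities are valid by \Cref{lemma_strengthOfFacet} combined with the ROC and conjugate equivalence results \Cref{lemma_ROCresult} and \Cref{lemma_ROCresult2}.

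For the reverse inclusion $P \subseteq \mathcal{V}(\cutPolytopeNoSS^3_3)$, the plan is to verify that every extreme point of $P$ is already an extreme point of $\mathcal{V}(\cutPolytopeNoSS^3_3)$. Identifying $\mathbb{C}^3\cong\mathbb{R}^6$ via real and imaginary parts, $\mathcal{V}(\cutPolytopeNoSS^3_3)$ is a full-dimensional polytope with exactly $9$ distinct extreme points, namely the vectors
\begin{align*}
  v^{(p,q)} := \left(e^{2\pi \imagUnit p/3},\; e^{2\pi \imagUnit (p+q)/3},\; e^{2\pi \imagUnit q/3}\right), \qquad (p,q)\in\{0,1,2\}^2,
\end{align*}
obtained as upper-triangular parts of $xx\conjTrans$ for $x\in\mathcal{B}^3_3$ after quotienting out the global-phase equivalence $x \sim \omega^k x$. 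A convex-hull computation on these $9$ points (by any standard double-description or reverse-search procedure) returns the full facet description; comparing the output with the $27$ inequalities of \eqref{eqn_facetDefining} and combining with the lower bound $F\geq 27$ already shown in \eqref{eqn_fLowerB} yields $F=27$ and hence equality.

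To make this verification humanly manageable, one exploits the symmetry group $G$ generated by the ROC action (\Cref{lemma_ROCresult}) and complex conjugation (\Cref{lemma_ROCresult2}), both of which preserve $\mathcal{V}(\cutPolytopeNoSS^3_3)$ and the full system of $27$ inequalities. Under $G$, the $9$ vertices form a single orbit, while the $27$ inequalities split into only four $G$-orbits: one orbit of size $3$ for each of the three half-space descriptions of $\ConvHull(\mathcal{B}_3)$ (acting on $x_1,x_2,x_3$ separately), together with a single orbit of size $18$ consisting of \eqref{eqn_cut33facet} and all its ROC and conjugate variants. It therefore suffices to verify the vertex-facet incidence pattern on a single representative per orbit and to extend by $G$-equivariance.

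The main obstacle is excluding the possibility of an additional facet orbit of $\mathcal{V}(\cutPolytopeNoSS^3_3)$ beyond the four identified above. Because the polytope has only $9$ vertices in dimension $6$, this is settled by a direct convex-hull computation: it terminates rapidly and returns precisely $27$ facets, which, together with the matching lower bound $F\geq 27$, completes the proof without further case analysis.
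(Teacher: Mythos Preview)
Your approach is correct in principle but rests on a direct convex-hull enumeration, whereas the paper avoids any computation by a short counting argument. The paper invokes McMullen's Upper Bound Theorem for polytopes, which for a $6$-dimensional polytope on $9$ vertices yields at most $f_5(9,6)=30$ facets; combined with the lower bound $F\geq 27$ from \eqref{eqn_fLowerB} this gives $27\leq F\leq 30$. The remaining gap is closed by the ROC orbit structure: any facet-defining inequality not already among the $27$ must involve at least two of the three coordinates (the single-coordinate case is exhausted by $\ConvHull(\mathcal{B}_3)$), and therefore carries an ROC orbit of size $9$, forcing $F\geq 27+9=36>30$, a contradiction.

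What each approach buys: your route is conceptually straightforward and would extend mechanically to other small cases (e.g.\ $\cutPolytopeNoSS^3_4$), but the argument is only as convincing as the machine computation it relies on, and your discussion of the symmetry group $G$ does not actually eliminate the need for that computation---you still defer the exclusion of further facet orbits to ``it terminates rapidly and returns precisely $27$ facets.'' The paper's argument, by contrast, is fully by hand and hinges on a single external ingredient (the Upper Bound Theorem), giving a proof that is both shorter and verifiable without software.
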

\begin{proof}
    The Upper-bound theorem for convex polytopes \cite{mcmullen1970maximum} states the following: for any convex $d$-dimensional polytope $P$ with $v$ vertices, the number of $j$-dimensional faces (see \Cref{def_convexFace} in the appendix) is upper bounded by some explicit number $f_j(v,d)$. For our purposes, we consider $\mathcal{V}(\cutPolytopeNoSS^3_3)$ as 6-dimensional real polytope. As its facets are 5-dimensional faces, the number of facets $F$ is upper bounded by
    \begin{align}
        F \leq f_5(9,6) = 30,
    \end{align}
    see, e.g., \cite[Section 1, Theorem 4]{gale1963neighborly}. Combined with \eqref{eqn_fLowerB}, this implies $27 \leq F \leq 30$. We prove now, by contradiction, that $F = 27$. Thus, assume that $27 < F \leq 30$. If that is the case, then there must exist some facet-defining inequality $\mathrm{Re}( \beta_1 x_1 + \beta_2 x_2 + \beta_3 x_3 ) \leq c$, which is missing from the right hand side of~\eqref{eqn_facetDefining}. Note that the vector $\beta \in \mathbb{C}^3$ contains at least two nonzero entries: if $\beta$ were to contain only a single nonzero entry, the inequality concerns only a single variable, say $x_1$. But the restriction $x_1 \in \ConvHull(\mathcal{B}_3)$ is already included in \eqref{eqn_facetDefining}, and clearly cannot be made tighter. 
    
    Thus, $\beta$ contains two or three nonzero entries. Now there must exist at least 8 other ROC equivalent inequalities, that are also facet-defining. This contradicts the result $F \leq 30$, which completes the proof.
\end{proof}
 {In \Cref{sectionAppendix_facetEnumeration}, we verify \Cref{lemma_strengthOfFacet} and \Cref{ThmCut33} by listing all 27 facets found using \texttt{SageMath}~\cite{sagemath}.}

We refer to the inequalities in \eqref{eqn_facetDefining}, induced by $\eta$, as \textit{the triangle facets (of  $\cutPolytopeNoSS_3^3$)}. 
One can strengthen the CSDP relaxation \ref{eqn_mimoSDPrelax}  by adding the triangle facets to the complex elliptope $\mathcal{E}^n_3$. Let us  denote the resulting feasible set by:
\begin{align}
    \label{eqn_triangleSetE}
    \complexTriSet = \left\{ X \in \mathcal{E}^n_3 
    \, | \, X_J \in \cutPolytopeNoSS^3_3, \, \forall J \subseteq [n], \, |J| = 3 \right\}.
\end{align}
Here, $X_J$ denotes the $|J| \times |J|$ principal submatrix of $X$, with rows and columns indicated by $J$.

\section{An efficient reformulation of a class of  CSDPs}
\label{sect:efficientReformulation}

It is well known that MAX-3-CUT can be modeled using $\cutPolytopeNoSS^n_3$, as demonstrated in \Cref{ex_max3cut}, and first shown by \citeauthor{goemans2001approximation} \cite{goemans2001approximation}. 
To approximate MAX-3-CUT, one can solve a CSDP over $\elliptopeNoSS^n_3$. 
 {On the other hand, \citeauthor{frieze1997improved} \cite{frieze1997improved} approximate MAX-3-CUT by a real SDP having matrix variables of order $n$, that is equivalent to the CSDP over $\elliptopeNoSS^n_3$. Here we specify a class of CSDPs that  can be efficiently reformulated  as  real SDPs.}

Modern SDP solvers solve CSDPs by representing $n \times n$ Hermitian matrices as $2n \times 2n$ symmetric matrices, via
\begin{align}
    \label{eqn_realComplexPSDmapping}
    X \in \mathcal{H}^n, \, X \succeq 0 \iff \widetilde{X} = \begin{bmatrix}
        \mathrm{Re}(X) & \mathrm{Im}(X) \\
        -\mathrm{Im}(X) & \mathrm{Re}(X)
    \end{bmatrix} \in \mathcal{S}^{2n}, \, \widetilde{X} \succeq 0,
\end{align}
 see also \cite{gilbert2017plea}. Consequently, CSDPs with matrix  order $n$,    {require doubling the order to $2n$, and are therefore} computationally more challenging than real SDPs with matrix order $n$. 
      \citeauthor{wang2023real} \cite{wang2023real} introduce a real moment-Hermitian-sum-of-squares hierarchy for complex polynomial optimization problems with real coefficients, without doubling the order.     Moreover, \citeauthor{wang2023real} show that their real hierarchy is equivalent to the complex hierarchy from the literature.
Here, we provide another class of problems for which a CSDP can be equivalently formulated as a real SDP of the same size.

\begin{proposition}
    \label{proposition_realReformulationCondition}   
    Let $U \subseteq \mathcal{H}^n_+$ be a  {closed convex set} that is closed under complex conjugation, and $W \in \mathcal{S}^n$. Then
    \begin{align}
        \max_{X \in U} \, \left\langle W, X \right\rangle = \max_{X \in \mathrm{Re}(U)} \langle W, X \rangle.
    \end{align}
\end{proposition}
 {
\begin{proof}
    As $U$ is closed under complex conjugation, $X \in U \iff \overline{X} \in U$. Additionally, by convexity of $U$, $\mathrm{Re}(X) = (X + \overline{X})/2 \in U$. Since $W$ is a real matrix, $\left\langle W, X \right\rangle = \left\langle W, \mathrm{Re}(X) \right\rangle$. This implies $\max_{X \in U} \left\langle W, X \right\rangle = \max_{X \in U}  \left\langle W, \mathrm{Re}(X) \right\rangle = \max_{X \in \mathrm{Re}\left(U\right)}  \left\langle W, X \right\rangle$.
\end{proof}
}

 {\subsection{MAX-3-CUT}

We investigate the implications of \Cref{proposition_realReformulationCondition}, for the case that the underlying (C)SDP corresponds to a relaxation of MAX-3-CUT, see \Cref{ex_max3cut}.
}
\label{section_max3cutCSDP}

Let us first formulate MAX-3-CUT as a real program. Without loss of generality, we assume that the graph underlying MAX-3-CUT is the complete graph on $n$ vertices, with edge weights $w_{ij} \in \mathbb{R}$, $i, \,j \in [n]$, $i < j$. Following \cite{frieze1997improved}, let $\mathbf{a}^1$, $\friezeVec^2$ and $\friezeVec^3$ be a set of unit vectors in $\mathbb{R}^3$ satisfying 
\begin{align}
    \label{eqn_friezeProp}
     (\friezeVec^i)^\top \friezeVec^j = \begin{cases}
        1, & \text{ if } i = j, \\
        -\frac{1}{2}, & \text{ else.}
    \end{cases}
\end{align}
\citeauthor{frieze1997improved} \cite{frieze1997improved}  model MAX-3-CUT as
\begin{alignat}{2}
\label{eqn_friezeMax3Cut}
    &\max_y \quad && \frac{2}{3} \sum_{i<j} w_{ij} (1-y_i^\top y_j) \\   
    &\text{s.t. } && y_i \in \left\{ \friezeVec^1, \, \friezeVec^2, \, \friezeVec^3 \right\} \quad \forall i \in [n].
\end{alignat}
We investigate the feasible set of this program in terms of matrices, denoted $\mathrm{Re}(\cutPolytopeNoSS^n_3)$. This set is given by
\begin{align}
    \mathrm{Re}(\cutPolytopeNoSS^n_3) & =  \left\{ \mathrm{Re}(Y) \, \mid \, Y \in \cutPolytopeNoSS^n_3 \right\} \\
    &= \ConvHull\left\{ Y \in {{\mathcal S}^n_+}  \, \middle| \, \exists y_1, \ldots, y_n \in \left\{ \friezeVec^1, \, \friezeVec^2, \, \friezeVec^3 \right\} \text{ s.t. } Y_{ij} = y^\top_i y_j \quad \forall i,j \in [n] \right\}.
\end{align}
To understand the second equality above, note that 
the objective in the \citeauthor{frieze1997improved} model and \eqref{eqn_complexCutValue} are similar.
That is, $\mathrm{Re}(\overline{x}_i x_j)$ is equal to the right-hand side of \eqref{eqn_friezeProp}, for $x_i, \, x_j \in \mathcal{B}_3$.

 {As $\cutPolytopeNoSS^n_3$ satisfies the conditions for $U$ in \Cref{proposition_realReformulationCondition}, we have, for $W \in \mathcal{S}^n$,}
\begin{align}
    \label{eqn_optImprovement}
       \max_{X \in \cutPolytopeNoSS^3_n} \left\langle W, X \right\rangle  = \max_{Y \in \mathrm{Re}(\cutPolytopeNoSS^3_n)} \langle W, Y \rangle.
\end{align}
Thus, $\mathrm{Re}(\cutPolytopeNoSS^n_3)$ is strictly smaller than $\cutPolytopeNoSS^n_3$, but  attains the same maxima of real linear forms, as it is the case for MAX-3-CUT.
The same principle holds for the sets
\begin{align}
    \mathrm{Re}(\mathcal{E}^n_3) := \left\{ \mathrm{Re}(X) \, \middle| \, X \in \mathcal{E}^n_3 \right\} = \left\{ X \in \mathcal{S}^n_+ \, \middle| \, \text{diag}(X) = \mathbf{1}_{n}, \, X_{ij} \geq -\frac{1}{2}, \, \forall i,j \in [n] \right\},
\end{align}
and $\mathcal{E}^n_3$, as it was already observed by \citeauthor{goemans2001approximation} \cite{goemans2001approximation}. Note that $\mathrm{Re}(\mathcal{E}^n_3)$ corresponds to the feasible set of the SDP relaxation for MAX-3-CUT by \citeauthor{frieze1997improved} \cite{frieze1997improved}.
However, if the objective matrix $W$ satisfies $\mathrm{Im}(W) \neq \mathbf{0}$, then the complex SDP cannot be reformulated to a real SDP with same size.

Let us now study a relation between $\triangleOp(\mathcal{E}^n_3)$, see \eqref{eqn_triangleSetE}, and 
\begin{align}
\label{eqn_defRealTriSet}
\realTriSet := \{ \mathrm{Re}(X) \, \mid \, X \in \triangleOp(\mathcal{E}^n_3) \}.
\end{align}
To do so, we determine the facets of $\mathrm{Re}\left( \mathcal{V}(\cutPolytopeNoSS^3_3) \right)$ in the following lemma.

\begin{lemma}
    \label{lemma_facetsRealCut3}
    The set $\mathrm{Re}\left( \mathcal{V}(\cutPolytopeNoSS^3_3) \right) := \{ \mathrm{Re}(x) \, | \, x \in  \mathcal{V}(\cutPolytopeNoSS^3_3) \}$, see \eqref{eqn_facetDefining}, is given by 
    \begin{align}
    \label{eqn_upperTriuReCut}
     \mathrm{Re}\left( \mathcal{V}(\cutPolytopeNoSS^3_3) \right) = \Set{ x \in \mathbb{R}^3 | \begin{array}{l}
        x_i \geq -\frac{1}{2}~~ \forall i \in [3], \quad x_1+x_2-x_3 \leq 1, \\[0.2cm]
        x_1-x_2+x_3 \leq 1,\quad -x_1+x_2+x_3 \leq 1
\end{array}}.
\end{align} 
\end{lemma}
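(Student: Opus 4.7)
The plan is to work directly from the extreme points. The set $\mathcal{V}(\cutPolytopeNoSS^3_3)$ is a polytope in $\mathbb{C}^3 \cong \mathbb{R}^6$, and its extreme points come from the rank-$1$ matrices $yy\conjTrans$ with $y \in \mathcal{B}_3^3$ whose upper-triangular entries define $\mathcal{V}(\cutPolytopeNoSS^3_3)$ via~\eqref{eqn_upperTriuPart}. Using the invariance $yy\conjTrans = (cy)(cy)\conjTrans$ for $c \in \mathcal{B}_3$, one may fix $y_1 = 1$ and parametrize the extreme points of $\mathcal{V}(\cutPolytopeNoSS^3_3)$ as the nine vectors $(a, b, \overline{a}b)$ with $a,b \in \mathcal{B}_3$.

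First I would take real parts of these nine vectors. Writing $\mathcal{B}_3 = \{1, \omega, \omega^2\}$ with $\mathrm{Re}(\omega) = \mathrm{Re}(\omega^2) = -1/2$, a short case check collapses the nine points to exactly the five distinct elements
\[
(1,1,1),\ (-\tfrac{1}{2},-\tfrac{1}{2},1),\ (1,-\tfrac{1}{2},-\tfrac{1}{2}),\ (-\tfrac{1}{2},1,-\tfrac{1}{2}),\ (-\tfrac{1}{2},-\tfrac{1}{2},-\tfrac{1}{2}).
\]
Since $\mathrm{Re}(\cdot)$ is linear, $\mathrm{Re}\left( \mathcal{V}(\cutPolytopeNoSS^3_3) \right)$ is the convex hull of these five points. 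Let $P$ denote this hull and $Q$ the polyhedron on the right-hand side of~\eqref{eqn_upperTriuReCut}.

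The inclusion $P \subseteq Q$ is routine: plug each of the five points into each of the six defining inequalities of $Q$ and observe that all of them are satisfied. For the reverse inclusion I would show $Q$ is a bounded polytope and enumerate its vertices. Boundedness follows from summing pairs of the triangle inequalities, e.g.\ $(x_1+x_2-x_3)+(x_1-x_2+x_3)\leq 2$ gives $x_1 \leq 1$, and symmetrically for $x_2,x_3$, so combined with $x_i \geq -\tfrac{1}{2}$ the set $Q$ is bounded. Vertices of $Q$ then arise from making three of the six inequalities tight; running through the $\binom{6}{3}=20$ cases (many of which produce the same vertex by symmetry, or are infeasible because a variable would fall below $-\tfrac{1}{2}$) yields exactly the same five points.

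The main obstacle is therefore the systematic vertex enumeration of $Q$, but the arithmetic is elementary because each $3\times 3$ system is triangular or easily reducible. Once both polytopes are shown to have the identical vertex set, equality $P = Q = \mathrm{Re}(\mathcal{V}(\cutPolytopeNoSS^3_3))$ follows from the Minkowski--Weyl theorem.
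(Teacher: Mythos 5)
Your proof is correct, but it follows a genuinely different path from the paper's. You work directly with vertex sets: you observe (correctly) that $\mathcal{V}(\cutPolytopeNoSS^3_3)$ is the convex hull of the nine vectors $(a,b,\overline{a}b)$, $a,b\in\mathcal{B}_3$, so by linearity of $\mathrm{Re}$ its real projection is the convex hull of their real parts, which collapse to five points; you then identify those five points with the vertices of the polyhedron $Q$ on the right-hand side of \eqref{eqn_upperTriuReCut} by enumeration. The paper instead starts from the complex facet description in \Cref{ThmCut33}: it picks an ROC-equivalent and a conjugate-equivalent copy of the triangle facet \eqref{eqn_cut33facet}, sums them so the imaginary parts cancel, and obtains each of the real inequalities $x_i+x_j-x_k\leq 1$; it then checks facet-definingness via three affinely independent tight points and appeals to the Upper-bound-theorem counting argument (as in \Cref{ThmCut33}) for completeness. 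Your route is more elementary and fully self-contained, requiring neither \Cref{ThmCut33} nor the ROC/conjugation machinery of \Cref{section_ROCequivalence}; the paper's route buys the observation---which the authors explicitly emphasize---that the known real facet description of the MAX-3-CUT polytope can be derived as a projection of the newly found complex facets, tying the real and complex pictures together. One cosmetic remark: you phrase the nine vectors as "the extreme points," but strictly you only need that $\mathcal{V}(\cutPolytopeNoSS^3_3)$ is their convex hull (some of the nine might a priori be redundant); your argument does not actually rely on all nine being extreme, so this is harmless.
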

\begin{proof}
    Starting from \eqref{eqn_facetDefining}, we consider the following three vectors: $\eta =  \left( e^{\pi \imagUnit/2}, e^{\pi \imagUnit/6},  e^{ \pi \imagUnit/2} \right)$,
\begin{align}
     \eta_1 = \left( e^{4 \pi \imagUnit/3} e^{\pi \imagUnit/2}, e^{\pi \imagUnit/6},  e^{-4\pi \imagUnit/3}e^{\pi \imagUnit/2} \right)
     \text{ and } \eta_2 = \left( e^{2\pi \imagUnit/3}e^{-\pi \imagUnit/2}, e^{-\pi \imagUnit/6},  e^{-2\pi \imagUnit/3}e^{-\pi \imagUnit/2} \right).
\end{align}
Note that $\eta_1$ can be obtained from $\eta$ by performing a rotation of coefficients with $(\alpha_1, \alpha_2) = \left( \exp{(4\pi \imagUnit/3)}, 1 \right)$. Similarly, $\eta_2$ can be obtained by taking $\overline{\eta}$, and then performing the rotation of coefficients with $(\alpha_1, \alpha_2) = \left( \exp{(2\pi \imagUnit/3)}, 1 \right)$.

Thus $\mathrm{Re}(\eta_1 x) \leq \sqrt{3}/2$, and $\mathrm{Re}(\eta_2 x) \leq \sqrt{3}/2$ are both valid inequalities for $\mathcal{V}(\cutPolytopeNoSS^3_3)$, see \Cref{section_ROCequivalence}. Consequently, also the sum of these inequalities is valid for $\mathcal{V}(\cutPolytopeNoSS^3_3)$. That is,
\begin{align}
    \label{eqn_derivedFacet}
    \mathrm{Re}\left( (\eta_1+\eta_2) x\right) = \mathrm{Re}\left( \sqrt{3} x_1 + \sqrt{3} x_2 - \sqrt{3} x_3 \right) \leq \sqrt{3} ~~\Rightarrow ~~\mathrm{Re}\left(  x_1 +  x_2 - x_3 \right) \leq 1,
\end{align}
which corresponds to one of the inequalities given in \eqref{eqn_upperTriuReCut}. The above inequality describes a facet of $\mathrm{Re}\left( \mathcal{V}(\cutPolytopeNoSS^3_3) \right)$, since the vectors $(1,1,1)^\top, \, \left(1, -\frac{1}{2} , -\frac{1}{2} \right)^\top, \, \left(-\frac{1}{2},1, -\frac{1}{2} \right)^\top$ are affinely independent, contained in $\mathrm{Re}\left( \mathcal{V}(\cutPolytopeNoSS^3_3) \right)$, and satisfy \eqref{eqn_derivedFacet} with equality. The other facets in \eqref{eqn_upperTriuReCut} can be found in a similar manner. 

Lastly, it can be shown that \eqref{eqn_upperTriuReCut} contains all facets via a similar argument as the one used in the proof of \Cref{ThmCut33}.
\end{proof}

The facets provided in \Cref{lemma_facetsRealCut3} are also given in \cite[Equation 1.3]{chopra1995facets} (they are stated in terms of $\{0,1\}$ variables rather than $\{ -\frac{1}{2},1 \}$ as in \eqref{eqn_friezeProp}).  However, our derivation from complex space is new.
Using facets from \eqref{eqn_upperTriuReCut}, one can optimize over $\realTriSet$.  {Note also that $\complexTriSet$ satisfies the conditions for $U$ in \Cref{proposition_realReformulationCondition}}. Hence, it is beneficial to optimize over $\realTriSet$ instead of $\complexTriSet$ if the matrix $W$ is real. 

\Cref{table_runTimeDiff} 
investigates the difference in solving times for optimization over $\realTriSet$ and $\complexTriSet$. For various values of $n$, we generate uniformly at random a real matrix $C \in \{-5,-4,\ldots,4,5\}^{n \times n}$, and solve the problem of maximizing $\langle C, X \rangle$ over $X \in \realTriSet$, and over $X \in \complexTriSet$. This maximization is repeated 5 times per value of $n$, and the average running time of those 5 runs is reported in \Cref{table_runTimeDiff}. As solver, we used MOSEK \cite{aps2023mosek}. Note that optimization over $\realTriSet$ and $\complexTriSet$ returns the same objective value since $C$ is real, see 
  \Cref{proposition_realReformulationCondition}. \Cref{table_runTimeDiff} clearly demonstrates that optimization over $\realTriSet$ is more efficient compared to optimization over $\complexTriSet$. The first reason for this is that solving real SDPs is computationally cheaper than solving complex SDPs, see \eqref{eqn_realComplexPSDmapping}. The other reason is that $\realTriSet$ contains less inequalities than $\complexTriSet$; compare  \eqref{eqn_facetDefining} with \eqref{eqn_upperTriuReCut}.

 {The CSDP reformulation approach by \citeauthor{wang2023real} \cite{wang2023real} mentioned in \Cref{section_max3cutCSDP}} does not apply to CSDPs over $\complexTriSet$, as the facets provided in \eqref{eqn_facetDefining} have non-real coefficients. Our generalization,  {\Cref{proposition_realReformulationCondition},} shows that a real reformulation of same size is possible when only the objective is real, and the feasible set is closed under complex conjugation (as is the case for $\complexTriSet$).  

\begin{table}[ht]
\centering
\begin{tabular}{cl|rrrrrrrrr}
\hline
\multicolumn{2}{c|}{Matrix size $n$} & 20 & 30 & 40 & 50 & 60 & 70 & 80 & 90 & 100 \\ \hline
\multicolumn{1}{c|}{\multirow{2}{*}{Solving time (s)}} & $\realTriSet$ & 0.03 & 0.10 & 0.33 & 0.84 & 1.83 & 3.67 & 6.73 & 12.73 & 22.74 \\
\multicolumn{1}{c|}{} & $\complexTriSet$ & 0.22 & 0.80 & 3.02 & 6.98 & 15.22 & 33.17 & 59.34 & 114.07 & 199.72 \\ \hline
\end{tabular}
\caption{Comparison of solving times of optimization over $\realTriSet$ and $\complexTriSet$. }
\label{table_runTimeDiff}
\end{table}

\section{Second semidefinite lifting of \texorpdfstring{$\cutPolytopeNoSS^n_\infty$}{CUT polytope}}
\label{section_secondLiftingCutInf}

In this section we study approximations of {$\cutPolytopeNoSS^n_\infty$}, see \eqref{eqn_infiniteCutPolytope}.
The approximation of $\cutPolytopeNoSS^4_\infty$ obtained from the second semidefinite lifting as proposed by \citeauthor{jarre2020set} \cite{jarre2020set} is denoted here by $\liftingSet{2}$. The matrices in set $\liftingSet{2}$ are obtained as projections of certain Hermitian PSD matrices of order seven.  We propose an approximation of $\cutPolytopeNoSS^4_\infty$ denoted by $\liftingSet{1}$, whose elements are the projections of certain Hermitian PSD matrices of order six.
Despite this difference in size of the lifted space, we show that $\liftingSet{1} = \liftingSet{2}$ (\Cref{lemma_equivalentSize6}).
 Additionally, we show that $\liftingSet{1}$ is also equivalent to the second  semidefinite  lifting of the complex Lasserre hierarchy proposed in \cite{josz2018lasserre} (\Cref{lemma_equivalentSecondDegLift}), whose elements are the projections of certain Hermitian PSD matrices of order ten.
 
 The results from this section imply that one may appropriatly decrease a size of matrices in an CSDP relaxation of {$\cutPolytopeNoSS^n_\infty$}, while keeping the strength of the  relaxation unchanged, see \Cref{lemma_degCompletionGeneral}.
We also show that $\liftingSet{1}$ excludes all the rank 2 extreme points of $\elliptopeNoSS^4_\infty$ (\Cref{thm_noR2EPinLE}). Lastly, we show that all elements of $\liftingSet{1}$ satisfy a valid inequality for $\cutPolytopeNoSS^4_\infty$, derived in \cite{jarre2020set} (\Cref{lemma_validCutForL}).

We begin our analysis with the  following well-known result on  a rank of extreme points of $\mathcal{E}^4_\infty$.  The extreme points of $\mathcal{E}^n_\infty$ have been widely studied, see e.g., \cite{christensen1979note,grone1990extremal,li1994note,loewy1980extreme}.
\begin{lemma}[\cite{loewy1980extreme}]
\label{lemma_extremePointsRootN}
    The extreme points of $\mathcal{E}^n_\infty$ have rank at most $\sqrt{n}$. Moreover, for every $k \leq \sqrt{n}$, the set $\mathcal{E}^n_\infty$ contains rank $k$ extreme points.
\end{lemma}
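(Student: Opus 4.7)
The plan is to exploit the standard face-theoretic characterization of extreme points of spectrahedra, applied to the affine slice of $\mathcal{H}^n_+$ cut out by the unit-diagonal constraint. The first assertion (rank upper bound) is a dimension count, while the "moreover" statement requires an explicit construction of matrices $V\in\mathbb{C}^{n\times k}$ with unit-norm rows realizing the bound.

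For the rank bound I would proceed as follows. Let $X\in\mathcal{E}^n_\infty$ be an extreme point of rank $r$, and factor $X=VV\conjTrans$ with $V\in\mathbb{C}^{n\times r}$ of full column rank; denote by $v_i\conjTrans\in\mathbb{C}^{1\times r}$ the rows of $V$, so that $X_{ii}=\|v_i\|^2=1$ for all $i\in[n]$. The face of $\mathcal{H}^n_+$ whose relative interior contains $X$ is well known to be linearly isomorphic to $\mathcal{H}^r_+$ via $M\mapsto VMV\conjTrans$, hence has real dimension $r^2$. Consider the affine subspace
\begin{align*}
\mathcal{A}:=\bigl\{\, V M V\conjTrans \;\bigm|\; M\in\mathcal{H}^r,\ v_i\conjTrans M v_i = 0 \ \forall i\in[n]\,\bigr\}.
\end{align*}
Every element of $\mathcal{A}$ lies in the linearization of the unit-diagonal constraint at $X$, so for small $\epsilon>0$ both $X\pm\epsilon V M V\conjTrans$ remain in the face and in $\mathcal{E}^n_\infty$. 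Extremality of $X$ forces $\mathcal{A}=\{0\}$, i.e.\ the $n$ real-linear functionals $M\mapsto v_i\conjTrans M v_i$ must span the real dual of $\mathcal{H}^r$. Since $\dim_\mathbb{R}\mathcal{H}^r=r^2$, this yields $n\geq r^2$ and hence $r\leq\sqrt{n}$.

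For the existence part, fix $k$ with $k^2\leq n$ and I would construct an explicit $V\in\mathbb{C}^{n\times k}$ whose rows $v_i$ have unit norm and whose rank-one Hermitians $v_i v_i\conjTrans$ span $\mathcal{H}^k$ as a real vector space; then reversing the linearization argument above shows that the resulting $X=VV\conjTrans$ is a rank-$k$ extreme point of $\mathcal{E}^n_\infty$. One convenient concrete choice: take any $k^2$ unit vectors $v_1,\ldots,v_{k^2}\in\mathbb{C}^k$ such that $\{v_i v_i\conjTrans\}$ is linearly independent over $\mathbb{R}$ (for example, scaled columns of a generic complex Hadamard-type matrix, or the $k^2$ vectors obtained from a symmetric informationally complete set), and then complete $V$ arbitrarily by appending $n-k^2$ additional unit-norm rows chosen from the span of the first $k^2$. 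Linear independence of $\{v_iv_i\conjTrans\}_{i\in[k^2]}$ already forces $\mathcal{A}=\{0\}$ in the argument above, so extremality follows regardless of the extra rows.

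The main obstacle is the existence half: the bound is easy once one recognizes the face of $\mathcal{H}^n_+$ through $X$ has dimension $r^2$, but producing $k^2$ unit vectors in $\mathbb{C}^k$ whose outer products span $\mathcal{H}^k$ over $\mathbb{R}$ requires an explicit construction (or a measure-theoretic genericity argument). I expect that quoting the existence of complex SIC-POVMs in small dimensions, or directly exhibiting a parametrized family and verifying linear independence of the Gram-type map by a determinant computation, is the cleanest route; an alternative is to invoke Loewy's original construction \cite{loewy1980extreme} and verify only that it lies in $\mathcal{E}^n_\infty$ after unit-norm normalization of its rows.
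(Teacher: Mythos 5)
The paper does not prove this lemma; it attributes it to \citeauthor{loewy1980extreme} and related works and gives no argument of its own. So there is nothing in the paper to compare against, and I assess your proof on its own terms.

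Your argument for the rank upper bound is correct and is the standard face-theoretic dimension count (it is also essentially what Loewy does). Factoring $X=VV\conjTrans$ with $V$ of full column rank $r$, the relevant facts are that any perturbation $B$ with $X\pm tB\succeq 0$ for small $t>0$ must have the form $B=VMV\conjTrans$ with $M\in\mathcal{H}^r$, and that the diagonal constraint forces $v_i\conjTrans M v_i=0$ for all $i$. Extremality then says there is no nonzero such $M$, i.e.\ the $n$ real-linear functionals $M\mapsto v_i\conjTrans M v_i=\langle v_iv_i\conjTrans,M\rangle$ span $(\mathcal H^r)^*\cong\mathbb R^{r^2}$, giving $n\geq r^2$. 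This is airtight.

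The existence part, as written, has a genuine gap that you yourself flag. Appealing to complex SIC-POVMs is not a valid route: their existence for every dimension $k$ is Zauner's conjecture and is not proven, so it cannot ground a statement asserted for all $k\leq\sqrt n$. The ``determinant computation'' and ``measure-theoretic genericity'' alternatives would work but are left unexecuted. In fact the needed construction is elementary and you should just give it: the $k+2\binom{k}{2}=k^2$ unit vectors $e_j$ (for $j\in[k]$), $(e_i+e_j)/\sqrt 2$, and $(e_i+\imagUnit e_j)/\sqrt 2$ (for $1\leq i<j\leq k$) have outer products whose real span is all of $\mathcal H^k$, since they recover respectively the diagonal, the symmetric off-diagonal, and the skew-Hermitian off-diagonal entries; being $k^2$ vectors in a $k^2$-dimensional space, they are linearly independent. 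Appending $n-k^2$ arbitrary unit rows (the first $k^2$ already span $\mathbb C^k$, so $V$ has rank $k$) and reversing your dimension-count argument shows $X=VV\conjTrans$ is a rank-$k$ extreme point of $\mathcal E^n_\infty$. With that substitution for the SIC-POVM appeal, your proof is complete.
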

In case $n \leq 3$, the extreme points of $\mathcal{E}^n_\infty$ have rank 1, and thus $\mathcal{E}^n_\infty = \cutPolytopeNoSS^n_\infty$ for $n \leq 3$. Therefore, in the sequel, we consider the smallest non-trivial case, that is $n = 4$. In this case, $\mathcal{E}^4_\infty$ contains rank~2 extreme points (see \eqref{eqn_extremalRank2Matrices} below), unlike $\cutPolytopeNoSS^4_\infty$, which shows that $\cutPolytopeNoSS^4_\infty$ is strictly contained in $\mathcal{E}^4_\infty$. This motivates the authors of \cite{jarre2020set} to investigate a second semidefinite lifting approximation to $\cutPolytopeNoSS^4_\infty$. To present their lifting, we first require some notation and definitions. 

 {For some $p \in \mathbb{N}$, let $\mathscr{B} \subseteq \mathbb{Z}^{p}$ be a finite basis satisfying $\mathbf{0}_{p} \in \mathscr{B}$. Consider a complex (truncated pseudo-moment) sequence}
\begin{align}
\label{eqn_sequenceProp}
(\mathsf{y}_{\alpha})_{\alpha \in \mathscr{B} - \mathscr{B}}, \text{ satisfying } \mathsf{y}_{\mathbf{0}} = 1 \text{ and } \mathsf{y}_\alpha = \overline{\mathsf{y}}_{-\alpha}, \text{ where } \mathscr{B} - \mathscr{B} := \{ \alpha - \beta \, | \, \alpha, \beta \in \mathscr{B} \}.
\end{align}
We define the \textit{complex moment matrix} $M_{\mathscr{B}}(\mathsf{y})$, indexed by the elements of $\mathscr{B}$,  satisfying
\begin{align}
    \label{eqn_momentMatrix}
    (M_{\mathscr{B}}(\mathsf{y}))_{\alpha, \beta} = \mathsf{y}_{\alpha - \beta}.
\end{align}
By the properties of $\mathsf{y}$, $M_{\mathscr{B}}(\mathsf{y}) \in \mathcal{H}^{|\mathscr{B}|}$ and $\text{diag}(M_{\mathscr{B}}(\mathsf{y})) = \mathbf{1}_{|\mathscr{B}|}$.
Let $\polySet$ be the space of polynomials defined by
\begin{align}
    \label{eqn_complexPolySet}
    \polySet := \left\{ \sum_{\alpha \in \mathbb{Z}^p} f_\alpha x^\alpha \, \middle| \, f_\alpha \in \mathbb{C} \,\, \forall \alpha \in \mathbb{Z}^p \right\}, \, \text{ for } x^\alpha := \prod x^{\alpha_i}_i, \text{ where } x^{\alpha_i}_i = \begin{cases}
        x^{\alpha_i}_i &\text{ if } \alpha_i \geq 0, \\
        (\overline{x_i})^{-\alpha_i} &\text{ if } \alpha_i < 0.
    \end{cases}
\end{align}
Note that $\mathrm{Re}(x) = (x + \overline{x})/2 \in \polySet$.
We set 
\begin{align}
    \label{eqn_posDefMomSet}
    \mathcal{F}(\mathscr{B}) := \left\{ M_\mathscr{B}(\mathsf{y}) \, | \, \mathsf{y}_{\mathbf{0}} = 1 \text{ and } \mathsf{y}_\alpha = \overline{\mathsf{y}}_{-\alpha} \right\} \cap \mathcal{H}^{|\mathscr{B}|}_+.
\end{align}
In this section, we study the sets
\begin{align}
    \label{eqn_twoLiftings}
    \liftingSet{i} := \left \{ X \in \mathcal{E}^4_\infty \mid \exists Z \in \mathcal{F}(\mathscr{B}_i)  \text{ satisfying } Z_{1:4,1:4} = X \right \} \text{ for } i \in [6],
\end{align}
which are defined in terms of the (ordered) bases
\begin{align}
    \label{eqn_explicitBases}
    \mathscr{B}_1 = \left\{ \mathbf{0}_3, \begin{bmatrix}
        1 \\ 0\\0
    \end{bmatrix}, \begin{bmatrix}
        0 \\1 \\0
    \end{bmatrix},\begin{bmatrix}
         0\\0 \\1
    \end{bmatrix}, \begin{bmatrix}
         -1\\1 \\0
    \end{bmatrix},
    \begin{bmatrix}
         -1\\0 \\1
    \end{bmatrix} \right\}, \quad \mathscr{B}_2 = \mathscr{B}_1 \cup \left\{  \begin{bmatrix}
         0\\-1 \\1
    \end{bmatrix} \right\},
\end{align}
and $\mathscr{B}_3$ up to $\mathscr{B}_6$, which will be given later. 

 {Observe that $\mathscr{B}_1$ and $\mathscr{B}_2$ do not contain monomial squares, i.e., $\alpha \in \mathscr{B}_k \Rightarrow |\alpha_i| < 2$ for all $i \in [3]$, $k\in\{1,2\}.$
\Cref{lemma_equivalentSecondDegLift} shows that by adding monomial squares to $\mathscr{B}_1$ does not lead to a tighter approximation  of $\cutPolytopeNoSS^4_\infty$. A similar result  follows for $\mathscr{B}_2$, see \Cref{corollary_equivalentLiftings}.}
An example that will be used throughout is the following:
\begin{align}
\label{eqn_big7Matrix} 
    \left(M_{\mathscr{B}_2}(\mathsf{y})\right)_{\alpha, \beta} = L_\mathsf{y} \left( X_{\alpha, \beta} \right), \text{ for } X =  
    \left [{\begin{smallmatrix}  1& \overline{x_1}& \overline{x_2}& \overline{x_3}& x_1\overline{x_2}& x_1\overline{x_3}& x_2\overline{x_3}\\[0.5 cm] x_1& 1& x_1\overline{x_2}& x_1\overline{x_3}& x_1^2\overline{x_2}& x_1^2\overline{x_3}& x_1x_2\overline{x_3}\\[0.5 cm] x_2& \overline{x_1}x_2& 1& x_2\overline{x_3}& x_1& x_1x_2\overline{x_3}& x_2^2\overline{x_3}\\[0.5 cm] x_3& \overline{x_1}x_3& \overline{x_2}x_3& 1& x_1\overline{x_2}x_3& x_1& x_2\\[0.5 cm] \overline{x_1}x_2& \overline{x_1^2}x_2& \overline{x_1}& \overline{x_1}x_2\overline{x_3}& 1& x_2\overline{x_3}& \overline{x_1}x_2^2\overline{x_3}\\[0.5 cm] \overline{x_1}x_3& \overline{x_1^2}x_3& \overline{x_1}\overline{x_2}x_3& \overline{x_1}& \overline{x_2}x_3& 1& \overline{x_1}x_2\\[0.5 cm] \overline{x_2}x_3& \overline{x_1}\overline{x_2}x_3& \overline{x_2^2}x_3& \overline{x_2}& x_1\overline{x_2^2}x_3& x_1\overline{x_2}& 1\end{smallmatrix}} \right ],
\end{align} 
where $L_\mathsf{y} : \polySet \to \mathbb{C}$ is the linear \textit{Riesz functional}, defined by 
\begin{align}
    \label{eqn_rieszFunctional}
    L_\mathsf{y}(f) = \sum_{\alpha \in \mathbb{Z}^p} f_\alpha \mathsf{y}_\alpha,
\end{align}
see \eqref{eqn_complexPolySet}. Observe also that $M_{\mathscr{B}_1}(\mathsf{y})$ is the upper left $6 \times 6$ block of $M_{\mathscr{B}_2}(\mathsf{y})$.

We refer to the sets $\liftingSet{i}$ as semidefinite liftings of $\cutPolytopeNoSS^4_\infty$, since
\begin{align}
    \label{eqn_chainInclusionCut}
    \cutPolytopeNoSS^4_\infty \subseteq \liftingSet{2} \subseteq \liftingSet{1} \subseteq \mathcal{E}^4_\infty.
\end{align}
\citeauthor{jarre2020set} \cite{jarre2020set} propose $\liftingSet{2} $ as a tighter approximation of $\cutPolytopeNoSS^4_\infty$ than $\elliptopeNoSS^4_\infty$.

\begin{remark}
    \label{remark_basisDiff}
    \citeauthor{jarre2020set} originally present their relaxation as $\liftingSet{3}$, see \eqref{eqn_twoLiftings}, for
    \begin{align} \mathscr{B}_3 =   \left\{ \mathbf{0}_4, \, \begin{bmatrix} 1\\-1\\0\\0\end{bmatrix}, \, \begin{bmatrix} 1\\0\\-1\\0\end{bmatrix}, \, \begin{bmatrix} 1\\0\\0\\-1\end{bmatrix}, \, \begin{bmatrix} 0\\1\\-1\\0\end{bmatrix}, \, \begin{bmatrix} 0\\1\\0\\-1\end{bmatrix}, \, \begin{bmatrix} 0\\0\\1\\-1\end{bmatrix}\right\}. \end{align} 
    The linear function
    \begin{align}
        g(x) = \begin{bmatrix}
            1 & 1 & 1 \\
            -1 &0&0 \\
            0&-1&0 \\
            0&0&-1
        \end{bmatrix} x
    \end{align}
    maps the elements of $\mathscr{B}_2$ to $\mathscr{B}_3$, while preserving equalities in $M_{\mathscr{B}}(\mathsf{y})$, i.e., 
    \begin{align}
        (M_{\mathscr{B}_2}(\mathsf{y}))_{\alpha_1, \alpha_2} = (M_{\mathscr{B}_2}(\mathsf{y}))_{\alpha_3, \alpha_4} \Rightarrow (M_{\mathscr{B}_3}(\mathsf{y}))_{g(\alpha_1), g(\alpha_2)} = (M_{\mathscr{B}_3}(\mathsf{y}))_{g(\alpha_3), g(\alpha_4)}.
    \end{align}
    Hence, $\liftingSet{3} = \liftingSet{2}$. In the sequel, we will use $\liftingSet{2}$ in favour of $\liftingSet{3}$, due to its more compact representation.
\end{remark}

We show now that, despite the smaller size of $\mathcal{F}(\mathscr{B}_1)$ compared to $\mathcal{F}(\mathscr{B}_2)$, see \eqref{eqn_posDefMomSet}, their induced approximations of $\cutPolytopeNoSS^4_\infty$ are equally strong. To do so, we define the following partial order.

\begin{definition}
    \label{def_completionDef}
    Let $\mathscr{B} \subseteq \mathbb{Z}^p$, and let $\widetilde{\mathscr{B}}$ be any subset of $\mathscr{B}$, with $k := |\widetilde{\mathscr{B}}|$. We say that $\mathscr{B}$ completes $\widetilde{\mathscr{B}}$, denoted $\widetilde{\mathscr{B}} \models \mathscr{B}$, if and only if, for each $\widetilde{X} \in \mathcal{F}\left(\widetilde{\mathscr{B}}\right)$, there exists an $X \in \mathcal{F}(\mathscr{B})$ satisfying $X_{1:k,1:k} = \widetilde{X}$. Here, it is implicitly assumed that bases $\widetilde{\mathscr{B}}$ and $\mathscr{B}$ are ordered, and that the first $k$ elements of $\mathscr{B}$ are the elements of $\widetilde{\mathscr{B}}$, in the same order.
\end{definition}

It is not difficult to show the following implication
\begin{align}
    \label{eqn_completionConsequence}
    \mathscr{B}_1 \models \mathscr{B}_2 ~~\Rightarrow~~ \liftingSet{1} = \liftingSet{2},
\end{align}
see \eqref{eqn_twoLiftings}. The condition in \Cref{def_completionDef} may be stated alternatively as: any $\widetilde{X} \in \mathcal{F}\left(\widetilde{\mathscr{B}}\right)$ is \textit{completable} to an $X \in \mathcal{F}(\mathscr{B})$. We provide more details on this in the proof of the following result.

\begin{lemma}
\label{lemma_equivalentSize6}
    $\liftingSet{1} = \liftingSet{2}$
\end{lemma}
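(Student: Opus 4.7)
The plan is to reduce to showing $\mathscr{B}_1 \models \mathscr{B}_2$ via implication~\eqref{eqn_completionConsequence}; the reverse inclusion $\liftingSet{2} \subseteq \liftingSet{1}$ is immediate from $\mathscr{B}_1 \subseteq \mathscr{B}_2$. Fix an arbitrary $\widetilde{X} = M_{\mathscr{B}_1}(\mathsf{y}) \in \mathcal{F}(\mathscr{B}_1)$. We must produce a PSD $7 \times 7$ extension $X = M_{\mathscr{B}_2}(\mathsf{y}') \in \mathcal{F}(\mathscr{B}_2)$ with $X_{1:6,1:6} = \widetilde{X}$; that is, we append one row and one column indexed by the unique new basis element $v_7 := (0,-1,1)^\top$, respecting both the Hankel-type moment structure of $\mathscr{B}_2$ and positive semidefiniteness.

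I would first catalog the entries of this new row/column. For $\alpha \in \mathscr{B}_1$, the entry at $(\alpha, v_7)$ is $\mathsf{y}_{\alpha - v_7}$, which is already determined by $\widetilde{X}$ whenever $\alpha - v_7 \in \mathscr{B}_1 - \mathscr{B}_1$. A direct check using~\eqref{eqn_explicitBases} shows this holds for $\alpha \in \{(0,0,0),(1,0,0),(0,0,1),(-1,0,1)\}$ but fails for $\alpha \in \{(0,1,0),(-1,1,0)\}$, where the entries are governed by two genuinely new moments $u := \mathsf{y}_{(0,2,-1)}$ and $w := \mathsf{y}_{(-1,2,-1)}$. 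Inspection of~\eqref{eqn_big7Matrix} further confirms that these two moments appear in no other entry of $M_{\mathscr{B}_2}(\mathsf{y}')$, so $u,w \in \mathbb{C}$ are free complex parameters constrained only by Hermiticity and PSD-ness of the $7 \times 7$ completion.

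The remaining task is thus a Hermitian partial PSD completion problem whose specified-entry graph $G$ on $\{v_1,\dots,v_7\}$ equals $K_7$ with only the two edges $\{v_3,v_7\}$ and $\{v_5,v_7\}$ removed. Since $N_G(v_7) = \{v_1,v_2,v_4,v_6\}$ is a clique of $G$ (it lies inside the $K_6$ spanned by $\{v_1,\ldots,v_6\}$), the vertex $v_7$ is simplicial, and after its removal we are left with $K_6$; hence $G$ is chordal. By the classical Grone--Johnson--S\'a--Wolkowicz Hermitian PSD completion theorem, the desired $u$ and $w$ exist provided every fully-specified principal submatrix corresponding to a maximal clique of $G$ is PSD. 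The $6 \times 6$ clique $\{v_1,\dots,v_6\}$ gives $\widetilde{X}$, PSD by assumption; the only remaining check is the $5 \times 5$ clique $\{v_1,v_2,v_4,v_6,v_7\}$, whose entries are entirely determined by $\widetilde{X}$ since $\{v_1,v_2,v_4,v_6,v_7\} - \{v_1,v_2,v_4,v_6,v_7\} \subseteq \mathscr{B}_1 - \mathscr{B}_1$.

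The hard step, and essentially the entire content of the lemma, is therefore showing PSD-ness of this $5 \times 5$ block from PSD-ness of $\widetilde{X}$. My preferred route is an atomic-representation argument: write $\widetilde{X}$ as a nonnegative combination $\sum_k \lambda_k \mathbf{z}^{(k)}(\mathbf{z}^{(k)})^*$ of rank-one character matrices with $\mathbf{z}^{(k)} = \bigl((x^{(k)})^\alpha\bigr)_{\alpha \in \mathscr{B}_1}$ for $x^{(k)} \in \mathbb{T}^3$ (via a Carath\'eodory-type argument on the finite-dimensional PSD cone, exploiting compactness of $\mathbb{T}^3$), and extend each rank-one term to the full $\mathscr{B}_2$ basis by the natural character $\bigl((x^{(k)})^\alpha\bigr)_{\alpha \in \mathscr{B}_2}$. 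Defining $u, w$ via the same convex combination makes the $5 \times 5$ block, and indeed the full $7 \times 7$ extension, a sum of rank-one Hermitian PSD matrices, hence PSD. Alternatively one may run a direct Schur-complement computation, writing the $5 \times 5$ block as $\bigl(\begin{smallmatrix} N & b \\ b^* & 1 \end{smallmatrix}\bigr)$ and noting that $b$ is (up to a permutation) the conjugate of column $v_3$ of $\widetilde{X}$ restricted to rows $\{v_1, v_2, v_4, v_6\}$, so that the inequality $N \succeq bb^*$ follows from the PSD-ness of the $5 \times 5$ principal submatrix of $\widetilde{X}$ on $\{v_1, v_2, v_3, v_4, v_6\}$ together with the moment-structure identities in $\mathscr{B}_1 - \mathscr{B}_1$. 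Either route, once in place, combined with chordal completion yields $\mathscr{B}_1 \models \mathscr{B}_2$ and hence $\liftingSet{1} = \liftingSet{2}$.
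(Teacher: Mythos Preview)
Your overall strategy---reduce to $\mathscr{B}_1 \models \mathscr{B}_2$, identify the two unspecified entries at positions $(v_3,v_7)$ and $(v_5,v_7)$, observe chordality of the specification graph, and invoke the Grone--Johnson--S\'a--Wolkowicz completion theorem so that the only nontrivial check is PSD-ness of the $5 \times 5$ clique on $\{v_1,v_2,v_4,v_6,v_7\}$---is exactly the paper's approach.

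Your Route~1, however, has a genuine gap. You assert that any $\widetilde{X} \in \mathcal{F}(\mathscr{B}_1)$ decomposes as $\sum_k \lambda_k \mathbf{z}^{(k)}(\mathbf{z}^{(k)})^*$ with each $\mathbf{z}^{(k)}=((x^{(k)})^\alpha)_{\alpha\in\mathscr{B}_1}$ a character vector for some $x^{(k)}\in\mathcal{B}^3_\infty$. This is precisely the statement that the truncated trigonometric moment problem for the index set $\mathscr{B}_1-\mathscr{B}_1$ is solvable whenever the moment matrix is PSD; no Carath\'eodory/compactness argument yields this, because Carath\'eodory only tells you that \emph{if} $\widetilde{X}$ lies in the convex hull of such rank-ones then it is a finite mixture of them, not that it lies there in the first place. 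Worse, if this decomposition were always available, projecting to the upper-left $4\times 4$ block would immediately give $\liftingSet{1}=\cutPolytopeNoSS^4_\infty$, which is exactly the paper's open \Cref{conjecture_exactnessOfLifting}. So Route~1 cannot work as written.

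Route~2 is correct and is essentially the paper's argument, though you leave the crucial compatibility implicit. Writing the $5\times 5$ block as $\bigl(\begin{smallmatrix} N & b \\ b^* & 1 \end{smallmatrix}\bigr)$ with $N=\widetilde X_{\{v_1,v_2,v_4,v_6\}}$, one checks that $b = P\bar c$ where $c$ is column $v_3$ of $\widetilde X$ restricted to rows $\{v_1,v_2,v_4,v_6\}$ and $P$ is the permutation $v_1\leftrightarrow v_4$, $v_2\leftrightarrow v_6$. The key additional fact (the ``moment-structure identities'' you allude to) is that the \emph{same} permutation satisfies $P^\top N P = \overline N$, because $v_1+v_4 = v_2+v_6 = (0,0,1)$; without this, knowing only that $b$ is a permuted conjugate of $c$ would not suffice. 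From $N\succeq cc^*$ (a Schur complement in $\widetilde X_{\{v_1,v_2,v_3,v_4,v_6\}}\succeq 0$) one then gets $N = \overline{P^\top N P} \succeq \overline{P^\top cc^* P} = bb^*$. The paper packages the same computation at the $5\times 5$ level, exhibiting a permutation $P$ with $P^\top Z_{\{v_1,v_2,v_4,v_6,v_7\}} P = \overline{\widetilde X_{\{v_1,v_2,v_3,v_4,v_6\}}}$.
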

\begin{proof}
By \eqref{eqn_completionConsequence}, it suffices to show that $\mathscr{B}_1 \models \mathscr{B}_2$. Thus, we need to verify that  all $X \in \mathcal{F}(\mathscr{B}_1)$ can be completed to a matrix in $Z \in \mathcal{F}(\mathscr{B}_2)$, see \eqref{eqn_big7Matrix}. That is, for given any $X \in \mathcal{F}(\mathscr{B}_1)$, and the corresponding partially specified matrix
    \begin{align}
    \label{eqn_lemmaProofZMat}
    Z =  {\begin{bmatrix}
            X & \begin{smallmatrix}
                X_{3,4} \\
                X_{3,6} \\
                \textit{?} \\
                X_{3,1} \\
                \textit{?} \\
                X_{3,2}
            \end{smallmatrix} \\
            \begin{smallmatrix}
                X_{4,3} &
                X_{6,3} &
                \textit{?} &
                X_{1,3} &
                \textit{?} &
                X_{2,3}
            \end{smallmatrix} & 1
        \end{bmatrix}},
    \end{align}
    can we find (possibly distinct) values for \textit{?} such that $Z \in \mathcal{F}(\mathscr{B}_2)$? Note that the only unspecified entries of $Z$ are at position $(3,7)$ and $(5,7)$ (ignoring the lower triangular part of $Z$). We associate to this pattern of unspecified entries a graph $\mathcal{G}$ of order 7, defined as
    \begin{equation}
    \label{eqn_patternGraph}
    \begin{aligned}
        &\mathcal{G} = (V,E), \, V = [7] \text{ and } \\
        &E = \{ \{i,j\} \, | \, i,j \in V, \,  Z_{ij} \neq \textit{?} \} = \{ \{i,j\} \, | \, 1 \leq i < j \leq 7 \} \setminus \left( \{3,7\} \cup \{5,7 \}\right)
    \end{aligned}
    \end{equation}
    Observe that $\mathcal{G}$ is chordal. Then, by \cite[Theorem 7]{grone1984positive},
    $Z$ can be completed if and only if, every fully specified submatrix (i.e., not containing any \textit{?} values) of $Z$ is positive semidefinite. To investigate this condition, we write $Z_{J}$, $J \subseteq [7]$, for the submatrix of $Z$, indexed by rows and columns in $J$. Before we consider all such fully specified submatrices $Z_J$, we consider first $Z_{\mathcal{J}}$, for $\mathcal{J} := \{1,2,4,6,7 \}$. Note that $Z_{\mathcal{J}}$ is fully specified, and given by 
\begin{equation}
\begin{aligned}
\label{eqn_sumOfSquaresComparison}
\left( Z_{\mathcal{J}} \right)_{ij} = L_{\mathsf{y}}(X_{ij}), \text{ for } X = \left [ \begin{smallmatrix}  1& \overline{x_1}& \overline{x_3}& x_1\overline{x_3}& x_2\overline{x_3}\\[0.5 cm] x_1& 1& x_1\overline{x_3}& x_1^2\overline{x_3}& x_1x_2\overline{x_3}\\[0.5 cm] x_3& \overline{x_1}x_3& 1& x_1& x_2\\[0.5 cm] \overline{x_1}x_3& \overline{x_1^2}x_3& \overline{x_1}& 1& \overline{x_1}x_2\\[0.5 cm] \overline{x_2}x_3& \overline{x_1}\overline{x_2}x_3& \overline{x_2}& x_1\overline{x_2}& 1\end{smallmatrix} \right ],
\end{aligned}
\end{equation}
and $L_{\mathsf{y}}$ as in \eqref{eqn_rieszFunctional}. Note that $P^\top Z_\mathcal{J} P = \overline{Z}_{J^\prime}$ for $P = E_{14}+E_{25}+ E_{31}+E_{42}+E_{53}$ and $J^\prime = \{1,2,3,4,6\}$.
Thus, matrix $Z_\mathcal{J}$ is similar to $\overline{Z}_{J^\prime}$. It follows that
 \begin{align}
     Z_{J^\prime} \text{ is a (fully specified) submatrix of } X \Rightarrow Z_{J^\prime} \succeq 0 \iff \overline{Z}_{J^\prime} \succeq 0 \iff Z_{\mathcal{J}} \succeq 0.
 \end{align}
 Let us now show that for any $J \subseteq [7]$ such that $Z_J$ is fully specified, $Z_J \succeq 0$. We distinguish two cases:
 \begin{enumerate}
     \item $J \subseteq \mathcal{J}$. Then $Z_J$ is a submatrix of $Z_{\mathcal{J}}$, and therefore $Z_J \succeq 0$.
     \item $J \not\subseteq \mathcal{J}$. Since $J \not\subseteq \mathcal{J}$, $3 \in J$ or $5 \in J$. As both $Z_{3,7}$ and $Z_{5,7}$ are unspecified, and $Z_J$ is fully specified, it follows that $7 \notin J$. Thus $J \subseteq [6]$. Consequently, $Z_J$ is a submatrix of $X$, and $X \in\mathcal{F}(\mathscr{B}_1)$ implies $X \succeq 0$, which shows $Z_J \succeq 0$.
 \end{enumerate}
To conclude, every fully specified submatrix of $Z$ is positive semidefinite, and the associated graph $\mathcal{G}$ is chordal. By \cite[Theorem 7]{grone1984positive}, $X \in \mathcal{F}(\mathscr{B}_1)$ can always be completed to a matrix in $Z \in \mathcal{F}(\mathscr{B}_2)$, which implies that $\mathscr{B}_1 \models \mathscr{B}_2$. By \eqref{eqn_completionConsequence}, this completes the proof.    
\end{proof}

We now relate $\liftingSet{1}$,  see \eqref{eqn_twoLiftings}, to the second semidefinite lifting proposed in \cite{josz2018lasserre}. This second lifting is given by $\liftingSet{4}$, where {$\mathscr{B}_4 = \{ \mathbf{0}_3 \} \cup \{ \alpha \in \mathbb{N}^3 \, | \, \sum_{i = 1}^3 \alpha_i \leq 2 \}$}. Note that $|\mathscr{B}_4| = 10 > | \mathscr{B}_1| = 6$. Despite this difference, the induced relaxations of $\cutPolytopeNoSS^4_\infty$ are equivalent, as shown in the following result.
\begin{theorem}    
\label{lemma_equivalentSecondDegLift}
    For $\mathscr{B}_4 =\{ \mathbf{0}_3 \} \cup \{ \alpha \in \mathbb{N}^3 \, | \, \sum_{i = 1}^3 \alpha_i \leq 2 \}$, we have $\liftingSet{4} = \liftingSet{1}$, see \eqref{eqn_twoLiftings}.
\end{theorem}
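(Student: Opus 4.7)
The plan is to prove both inclusions $\liftingSet{4}\subseteq\liftingSet{1}$ and $\liftingSet{1}\subseteq\liftingSet{4}$ separately, following the strategy of \Cref{lemma_equivalentSize6}. Writing $\mathbf{e}_i$ for the $i$-th standard basis vector of $\mathbb{Z}^3$, the first inclusion rests on a shift-invariance observation: since $(M_{\mathscr{B}}(\mathsf{y}))_{\alpha,\beta}=\mathsf{y}_{\alpha-\beta}$ depends only on the difference $\alpha-\beta$, translating an ordered basis by a fixed $\gamma \in \mathbb{Z}^3$ leaves the matrix entries unchanged up to row and column relabelling. A direct computation yields $\mathbf{e}_1+\mathscr{B}_1 = \{\mathbf{e}_1, 2\mathbf{e}_1, \mathbf{e}_2, \mathbf{e}_3, \mathbf{e}_1+\mathbf{e}_2, \mathbf{e}_1+\mathbf{e}_3\} \subseteq \mathscr{B}_4$. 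Hence, for any $\mathsf{y}$ with $M_{\mathscr{B}_4}(\mathsf{y}) \succeq 0$, the principal submatrix of $M_{\mathscr{B}_4}(\mathsf{y})$ at rows and columns $\mathbf{e}_1 + \mathscr{B}_1$ equals $M_{\mathscr{B}_1}(\mathsf{y})$, and is therefore PSD. Since the first four elements of the ordered bases $\mathscr{B}_1$ and $\mathscr{B}_4$ coincide, the projection $Z_{1:4,1:4}$ yields the same $X$, giving $\liftingSet{4}\subseteq\liftingSet{1}$.

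For the reverse inclusion $\liftingSet{1}\subseteq\liftingSet{4}$, the plan follows the PSD matrix completion strategy of \Cref{lemma_equivalentSize6}. Given $\mathsf{y}$ with $M_{\mathscr{B}_1}(\mathsf{y})\succeq 0$ and projection $X$, I would build a partially specified $10\times 10$ Hermitian matrix $Z$ on rows and columns indexed by $\mathscr{B}_4$, setting $Z_{\alpha,\beta} = \mathsf{y}_{\alpha-\beta}$ whenever $\alpha-\beta \in \mathscr{B}_1 - \mathscr{B}_1$ and leaving the remaining entries free. The conditions $\text{diag}(Z) = \mathbf{1}$ and $Z_{1:4,1:4} = X$ are then automatically encoded. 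Letting $\mathcal{G}$ be the graph on $\mathscr{B}_4$ whose edges mark the specified off-diagonal pairs, the goal is to invoke the chordal completion theorem \cite[Theorem 7]{grone1984positive}: if $\mathcal{G}$ is chordal and every fully specified principal submatrix of $Z$ is PSD, a PSD completion exists.

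The main obstacle is that, although $\mathcal{G}$ does admit a perfect elimination ordering starting from the low-degree vertices $\mathbf{0}_3$, $2\mathbf{e}_2$, $2\mathbf{e}_3$ (and continuing through $2\mathbf{e}_1$, $\mathbf{e}_2+\mathbf{e}_3$), and is therefore chordal, not every maximal clique of $\mathcal{G}$ is a single translate of a subset of $\mathscr{B}_1$. For instance, one checks that $K = \{\mathbf{e}_2, \mathbf{e}_3, \mathbf{e}_1+\mathbf{e}_2, \mathbf{e}_1+\mathbf{e}_3, \mathbf{e}_2+\mathbf{e}_3\}$ forms a clique in $\mathcal{G}$ (all ten pairwise differences lie in $\mathscr{B}_1 - \mathscr{B}_1$), yet no translate $\gamma$ makes $K - \gamma \subseteq \mathscr{B}_1$, so positive semidefiniteness of the corresponding fully specified submatrix of $Z$ cannot be obtained from a single shift argument. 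I therefore expect the proof to stage the completion through the intermediate bases $\mathscr{B}_5$ and $\mathscr{B}_6$ alluded to in the paper, each obtained from the previous by adding one or two carefully chosen elements (for example $\mathbf{e}_2+\mathbf{e}_3$ at the first stage), and to establish the chain $\mathscr{B}_1 \models \mathscr{B}_5 \models \mathscr{B}_6 \models \mathscr{B}_4$ of completions where, at every step, the newly appearing cliques reduce via the shift argument to principal submatrices of the previous PSD moment matrix. Combining these stepwise completions with the first inclusion then yields $\liftingSet{1}=\liftingSet{4}$.
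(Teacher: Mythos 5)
Your first inclusion $\liftingSet{4}\subseteq\liftingSet{1}$ is correct, and the translation argument---observing $\mathbf{e}_1+\mathscr{B}_1\subseteq\mathscr{B}_4$ and that the corresponding principal submatrix of $M_{\mathscr{B}_4}(\mathsf{y})$ \emph{is} $M_{\mathscr{B}_1}(\mathsf{y})$---is a clean, self-contained route the paper does not take explicitly (the paper obtains both inclusions at once from $\models$ plus the containment $\widetilde{\mathscr{B}}\subseteq\mathscr{B}$). You are also right that a one-shot completion from $\mathscr{B}_1$ to $\mathscr{B}_4$ is not available: the $5$-clique $K=\{\mathbf{e}_2,\mathbf{e}_3,\mathbf{e}_1+\mathbf{e}_2,\mathbf{e}_1+\mathbf{e}_3,\mathbf{e}_2+\mathbf{e}_3\}$ of $\mathcal{G}$ genuinely is not a translate of any $5$-subset of $\mathscr{B}_1$ (the differences $\mathbf{e}_2$ and $\mathbf{e}_3$ are each realized by a \emph{unique} pair in $\mathscr{B}_1$, both involving $\mathbf{0}$, forcing inconsistent shifts $\gamma=\mathbf{e}_3$ and $\gamma=\mathbf{e}_2$).

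However, the remedy you sketch for $\liftingSet{1}\subseteq\liftingSet{4}$ has a structural error and is also left incomplete. The relation $\widetilde{\mathscr{B}}\models\mathscr{B}$ requires $\widetilde{\mathscr{B}}\subseteq\mathscr{B}$ (\Cref{def_completionDef}), so the chain $\mathscr{B}_1\models\mathscr{B}_5\models\mathscr{B}_6\models\mathscr{B}_4$ cannot hold: $\mathscr{B}_1\not\subseteq\mathscr{B}_5$ (e.g.\ $-\mathbf{e}_1+\mathbf{e}_2\in\mathscr{B}_1\setminus\mathscr{B}_5$), and $\mathscr{B}_6\not\subseteq\mathscr{B}_4$ since $\mathscr{B}_6$ contains vectors with negative entries while $\mathscr{B}_4\subseteq\mathbb{N}^3\cup\{\mathbf{0}\}$. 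Indeed $\mathscr{B}_1\not\subseteq\mathscr{B}_4$, so no linear $\models$-chain from $\mathscr{B}_1$ to $\mathscr{B}_4$ can exist at all. The paper instead uses a ``tent'': it shows $\mathscr{B}_1\models\mathscr{B}_6$ (via \eqref{eqn_B1ToB6}) and $\mathscr{B}_5\models\mathscr{B}_6$ (via \eqref{eqn_B5ToB6}), giving $\liftingSet{1}=\liftingSet{6}=\liftingSet{5}$, and separately $\mathscr{B}_5\models\mathscr{B}_4$ (via \eqref{eqn_bigExtension}), giving $\liftingSet{5}=\liftingSet{4}$; passing through the common superset $\mathscr{B}_6$ is essential because it contains both $\mathscr{B}_1$ and $\mathscr{B}_5$. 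Finally, even setting aside the chain's well-formedness, your proposal states what you ``expect the proof to do'' rather than carrying out the one-vector-at-a-time completion steps, each of which requires identifying the clique $\mathcal{J}$ of \eqref{eqn_setJDef} and verifying it is (similar to) a principal submatrix of the given PSD moment matrix. Without those verifications the second inclusion is not established.
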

\begin{proof}
    We start by considering the proof of \Cref{lemma_equivalentSize6} more abstractly. Let $\widetilde{\mathscr{B}} \subseteq \mathscr{B} \subseteq \mathbb{Z}^p$, where $|\widetilde{\mathscr{B}}| = k = |\mathscr{B}|-1$ (Note that this is the case for $\mathscr{B}_1$ and $\mathscr{B}_2$, see \Cref{lemma_equivalentSize6}). Consider the problem of completing some $X \in \mathcal{F}\left(\widetilde{\mathscr{B}}\right)$ to some $Z \in \mathcal{F}(\mathscr{B})$. This $Z$ should be thought of as in \eqref{eqn_lemmaProofZMat}, possibly containing \textit{?} values. Let
    \begin{align}
        \label{eqn_setJDef}
     \mathcal{J} :=  \left \{ i \in \mathbb{N} \, | \, Z_{i,k+1} \neq \textit{?} \right \},
    \end{align}
    so that matrix $Z_\mathcal{J}$, the submatrix of $Z$, is fully specified by $X$. Note that the associated graph $\mathcal{G}$, see e.g.,~\eqref{eqn_patternGraph}, is chordal and we may apply again \cite[Theorem 7]{grone1984positive}. By similar reasoning as in \Cref{lemma_equivalentSize6}, the condition that $Z_\mathcal{J}$ is similar to a submatrix of $X$, is sufficient (although not necessary) for $\widetilde{\mathscr{B}} \models \mathscr{B}$ to hold.

    Following the above steps for specific sets $\widetilde{\mathscr{B}}$ and $\mathscr{B}$, we are able to prove the following relations.
    Starting from 
    \begin{align} 
    \label{eqn_liftingB5}
    \mathscr{B}_5 = \left\{ \mathbf{0}_3, \, \begin{bmatrix} 1\\0\\0\end{bmatrix}, \, \begin{bmatrix} 0\\1\\0\end{bmatrix}, \, \begin{bmatrix} 0\\0\\1\end{bmatrix}, \, \begin{bmatrix} 1\\1\\0\end{bmatrix}, \, \begin{bmatrix} 0\\1\\1\end{bmatrix}\right\}, \end{align}
    we have (details omitted)
    \begin{align}
     \label{eqn_bigExtension}
    \begin{split}
        \mathscr{B}_5 &\models \mathscr{B}_5 \cup \left\{ 
        \begin{bmatrix}
            1 \\ 0 \\ 1 
        \end{bmatrix} \right\} \models \mathscr{B}_5 \cup \left\{ 
        \begin{bmatrix}
            1 \\ 0 \\ 1 
        \end{bmatrix},           
        \begin{bmatrix}
            2\\0 \\0
        \end{bmatrix} \right \} \models \mathscr{B}_5 \cup \left\{ 
        \begin{bmatrix}
            1 \\ 0 \\ 1 
        \end{bmatrix},   
        \begin{bmatrix}
            2\\0 \\0
        \end{bmatrix},  
        \begin{bmatrix}
            0\\2 \\0
        \end{bmatrix}\right \} \\
        &\models \mathscr{B}_5 \cup \left\{ 
        \begin{bmatrix}
            1 \\ 0 \\ 1 
        \end{bmatrix},   
        \begin{bmatrix}
            2\\0 \\0
        \end{bmatrix},   
        \begin{bmatrix}
            0\\2 \\0
        \end{bmatrix},  
        \begin{bmatrix}
            0\\0 \\2
        \end{bmatrix}\right \} = \mathscr{B}_4,
    \end{split} \\
    \label{eqn_B5ToB6}
        \mathscr{B}_5 &\models \mathscr{B}_5 \cup \left\{ 
        \begin{bmatrix}
            -1 \\ 0 \\ 1 
        \end{bmatrix} \right\} \models \mathscr{B}_5 \cup \left\{ 
        \begin{bmatrix}
            -1 \\ 0 \\ 1 
        \end{bmatrix},   
        \begin{bmatrix}
            -1\\1 \\0
        \end{bmatrix} \right \} := \mathscr{B}_6,
    \end{align}
    and starting from $\mathscr{B}_1$ as in \eqref{eqn_explicitBases}, we have
    \begin{align}
        \label{eqn_B1ToB6}
        \mathscr{B}_1 &\models \mathscr{B}_1 \cup \left\{
        \begin{bmatrix}
            1 \\ 1 \\ 0 
        \end{bmatrix} \right\} \models \mathscr{B}_1 \cup \left\{ 
        \begin{bmatrix}
            1 \\ 1 \\ 0 
        \end{bmatrix},   
        \begin{bmatrix}
            0\\1 \\1
        \end{bmatrix} \right \} = \mathscr{B}_6.
    \end{align}
    Combining the implication \eqref{eqn_completionConsequence} (which holds more generally for $\mathscr{B}_i$ and $\mathscr{B}_j$), with equations \eqref{eqn_B5ToB6} and \eqref{eqn_B1ToB6} yields $\liftingSet{1} = \liftingSet{5}$. Since $\liftingSet{5} = \liftingSet{4}$ by \eqref{eqn_bigExtension}, the result follows.
\end{proof}
By combining results of \Cref{lemma_equivalentSize6} and \Cref{lemma_equivalentSecondDegLift}, we obtain the following corollary.
\begin{corollary}
    \label{corollary_equivalentLiftings}
    For all $i,j \in [6]$, $\liftingSet{i} = \liftingSet{j}$.
\end{corollary}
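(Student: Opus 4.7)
The plan is to show that the corollary follows by simple transitivity from the chain of equalities established earlier. Essentially, all six liftings have been tied to $\liftingSet{1}$ already, so no new work is required beyond bookkeeping.

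First, I would record the four equalities that are immediately available. From \Cref{lemma_equivalentSize6} we have $\liftingSet{1} = \liftingSet{2}$, and from \Cref{remark_basisDiff} we have $\liftingSet{2} = \liftingSet{3}$. From \Cref{lemma_equivalentSecondDegLift} we have $\liftingSet{4} = \liftingSet{1}$. Finally, inside the proof of \Cref{lemma_equivalentSecondDegLift}, the chain of completions \eqref{eqn_B1ToB6} yields $\mathscr{B}_1 \models \mathscr{B}_6$, so by \eqref{eqn_completionConsequence} we obtain $\liftingSet{1} = \liftingSet{6}$; and \eqref{eqn_B5ToB6} gives $\mathscr{B}_5 \models \mathscr{B}_6$, while the first step of \eqref{eqn_bigExtension} embedded in that same proof gives $\liftingSet{5} = \liftingSet{4} = \liftingSet{1}$.

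Transitivity of equality then gives $\liftingSet{i} = \liftingSet{j}$ for all $i,j \in [6]$. I would present the argument as a single display listing the chain
\[
  \liftingSet{3} = \liftingSet{2} = \liftingSet{1} = \liftingSet{4} = \liftingSet{5} = \liftingSet{6},
\]
with each equality annotated by the result that justifies it.

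There is no real obstacle here; the only mild subtlety is to make clear that $\liftingSet{5}$ and $\liftingSet{6}$ are genuinely part of the statement, and that the equalities involving them have indeed been established in passing during the proof of \Cref{lemma_equivalentSecondDegLift}. If one wanted a fully self-contained corollary proof, one could alternatively observe that the relation $\models$ is transitive, and that \eqref{eqn_bigExtension}, \eqref{eqn_B5ToB6}, \eqref{eqn_B1ToB6}, together with \Cref{lemma_equivalentSize6} and \Cref{remark_basisDiff}, supply completion relations connecting $\mathscr{B}_i$ to $\mathscr{B}_j$ for every pair $i,j \in [6]$, whence \eqref{eqn_completionConsequence} yields equality of the corresponding lifts.
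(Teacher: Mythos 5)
Your main argument is correct and matches what the paper intends: the chain $\liftingSet{3}=\liftingSet{2}=\liftingSet{1}=\liftingSet{4}=\liftingSet{5}=\liftingSet{6}$ follows directly from \Cref{remark_basisDiff}, \Cref{lemma_equivalentSize6}, \Cref{lemma_equivalentSecondDegLift}, and the intermediate equalities $\liftingSet{1}=\liftingSet{5}=\liftingSet{6}$ established via \eqref{eqn_B5ToB6}, \eqref{eqn_B1ToB6}, \eqref{eqn_bigExtension} and \eqref{eqn_completionConsequence} inside the proof of \Cref{lemma_equivalentSecondDegLift}; the paper itself leaves this bookkeeping implicit, so your write-up is essentially identical to the intended argument.

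One imprecision in your final remark: it is not true that the completion relation $\models$ connects every pair $\mathscr{B}_i$, $\mathscr{B}_j$, since $\models$ presupposes a containment $\mathscr{B}_i\subseteq\mathscr{B}_j$ (for instance $\mathscr{B}_2\not\subseteq\mathscr{B}_4$, as $\mathscr{B}_2$ contains vectors with negative entries and $\mathscr{B}_4$ does not). Transitivity of $\models$ does hold whenever the containments line up, but what makes the corollary go through is transitivity of equality among the sets $\liftingSet{i}$, not a web of $\models$ relations among all six bases. Your primary chain-of-equalities proof already avoids this issue, so the alternative framing should simply be dropped or rephrased.
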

In the sequel, we will only refer to $\liftingSet{1}$ for compactness.
Next, we show that $\liftingSet{1}$ does not contain any of the rank 2 extreme points of $\elliptopeNoSS^4_\infty$. Let us first characterize the set of rank 2 extreme points of $\mathcal{E}^4_\infty$. For this, we require the following definition, see matrix $F$ from \cite[Section 2.2]{li1994note}.
\begin{definition}
\label{def_EGF}
    We say that
    \begin{align}
    G = \begin{bmatrix}
            x_1 & u_{1} & w_{1} & v_{1} \\
            x_2& u_{2} & w_{2} & v_{2} 
        \end{bmatrix} \in \mathbb{C}^{2 \times 4}
        \end{align}
     is an Extremal Gram Factor (EGF) if and only if its columns $x$, $u$, $w$, $v$ have norm 1, and the matrix
    \begin{align}
        \label{eqn_matrixF}
        F := \begin{bmatrix}
            | x_{1} |^2 & x_{1} \overline{x_{2}} & \overline{x_{1}} x_{2} & | x_{2} |^2 \\
            | u_{1} |^2 & u_{1} \overline{u_{2}} & \overline{u_{1}} u_{2} & | u_{2} |^2 \\
            | w_{1} |^2 & w_{1} \overline{w_{2}} & \overline{w_{1}} w_{2} & | w_{2} |^2 \\
            | v_{1} |^2 & v_{1} \overline{v_{2}} & \overline{v_{1}} v_{2} & | v_{2} |^2 
        \end{bmatrix}
    \end{align}
    is non-singular.
\end{definition}
Now $P$, the set of rank 2 extreme points of $\mathcal{E}^4_\infty$, is given by the product of EGFs, i.e.,
\begin{align}
    \label{eqn_extremalRank2Matrices}
    P =\left \{ G\conjTrans G \, | \, G \text{ is an EGF, see  \Cref{def_EGF}} \right \},
\end{align}
as proven in \cite{li1994note} (note that EGFs are defined for general matrix sizes in \cite{li1994note}). Thus, if $A$ is a rank~2 extreme point of $\mathcal{E}^4_\infty$, it must be of the form $A = G\conjTrans G$, where $G$ is an EGF. Given such $A$, the corresponding matrix $G$ is unique up to unitary transformation of its columns. We will use MATLAB like notation for indexing submatrices of $G$, i.e., for some $J \subseteq [4]$, $G_{:,J} \in \mathbb{C}^{2 \times |J|}$ denotes the submatrix obtained by taking all rows of $G$, and columns of $G$ indexed by $J$.

Let us prove several results related to EFGs.
\begin{lemma}
    \label{lemma_invertMat}
    Let $G\in \mathbb{C}^{2 \times 4}$ be an EGF. Then for any  $J \subseteq [4]$, $|J|=2$ the matrix $G_{:,J}$ is invertible.    
\end{lemma}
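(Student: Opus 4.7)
The plan is to prove the contrapositive: if some $G_{:,J}$ is singular, then the matrix $F$ from \Cref{def_EGF} is also singular, contradicting the EGF assumption. Fix $J \subseteq [4]$ with $|J| = 2$, and without loss of generality write $J = \{1,2\}$, so that $G_{:,J}$ consists of the columns labeled $x$ and $u$ in \Cref{def_EGF}.

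If $G_{:,J}$ is singular, then its two columns $x, u \in \mathbb{C}^2$ are linearly dependent. Since both $x$ and $u$ have unit norm (by the EGF condition), this forces $u = \lambda x$ for some $\lambda \in \mathbb{C}$ with $|\lambda| = 1$. The key observation is that each of the four entries making up a row of $F$ is invariant under multiplication of the corresponding column of $G$ by a unit-modulus scalar: indeed,
\begin{align*}
|u_1|^2 &= |\lambda|^2 |x_1|^2 = |x_1|^2, \quad u_1 \overline{u_2} = \lambda \overline{\lambda}\, x_1 \overline{x_2} = x_1 \overline{x_2},
\end{align*}
and similarly for the remaining two entries. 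Hence rows 1 and 2 of $F$ coincide, so $F$ is singular, contradicting \Cref{def_EGF}.

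The argument is symmetric in the choice of $J$, so the same reasoning rules out the singularity of $G_{:,J}$ for any two-element subset $J$. The only step that requires care is the observation that the rows of $F$ depend on the columns of $G$ only through their ``outer-product'' data $gg\conjTrans$, which is precisely what is preserved under the unit-modulus rescaling $u = \lambda x$; this is what makes the contradiction work. I do not anticipate a substantive obstacle beyond this structural observation.
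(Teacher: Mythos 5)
Your proof is correct and follows essentially the same approach as the paper's: assume singularity, deduce that the two columns are unit-modulus scalar multiples of one another, observe that the corresponding rows of $F$ then coincide, and conclude that $F$ is singular. You spell out the unit-modulus invariance of the $F$-entries a bit more explicitly than the paper does, but the underlying argument is identical.
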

\begin{proof}
    Proof by contradiction: assume that $G$ is an EGF, and that for some $J \subseteq [n]$, $|J| = 2$, matrix $G_{:,J} \in \mathbb{C}^{2 \times 2}$ is singular. 
    A $2 \times 2$ matrix can only be singular if its second column equals its first column multiplied by some $r \in \mathbb{C}$. Since the columns of $G$ have norm 1, we find that $|r| = 1$. But this implies that $F$, see \eqref{eqn_matrixF}, has two identical rows, and is thus singular. This contradicts the assumption that $G$ is an EGF.
\end{proof}

\begin{lemma}
    \label{lemma_simplePara}
    Let $A \in P$, see \eqref{eqn_extremalRank2Matrices}, i.e., $A$ is an extreme point of $\mathcal{E}^4_\infty$ with $\rankOp(A) = 2$. Then, there exists an EGF $G \in \mathbb{C}^{2 \times 4}$, satisfying 
    \begin{align}
    \label{eqn_EGFniceForm}
        G = \begin{bmatrix}
            1 & u_{1} & w_{1} & v_{1} \\
            0 & u_{2} & w_{2} & v_{2}
        \end{bmatrix} = \begin{bmatrix}
            \mathbf{e} & u & w & v
        \end{bmatrix},
    \end{align}
    where $u_{2}$, $w_{2}$, and $v_{2}$ are nonzero and $\mathbf{e} = [1,0]^\top$.
\end{lemma}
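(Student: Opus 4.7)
The plan is to start from an arbitrary EGF that represents $A$, then use a unitary transformation to normalize its first column to $\mathbf{e}=[1,0]^\top$, and finally invoke \Cref{lemma_invertMat} to force the remaining entries $u_2,w_2,v_2$ to be nonzero.

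Since $A\in P$ has rank 2, by \eqref{eqn_extremalRank2Matrices} there exists an EGF $G^0=[x^0,u^0,w^0,v^0]\in\mathbb{C}^{2\times 4}$ with $A=(G^0)\conjTrans G^0$. The factor $G^0$ is only determined up to left multiplication by a $2\times 2$ unitary matrix $U$, because $(UG^0)\conjTrans (UG^0)=(G^0)\conjTrans U\conjTrans U G^0=(G^0)\conjTrans G^0=A$. Because $\|x^0\|=1$, I can choose a unitary $U$ such that $Ux^0=\mathbf{e}$ (for instance, complete $x^0$ to an orthonormal basis $\{x^0,y^0\}$ of $\mathbb{C}^2$ and set $U:=\bigl[\,x^0\;y^0\,\bigr]\conjTrans$). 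Define $G:=UG^0=[\mathbf{e},u,w,v]$, which has the required shape in \eqref{eqn_EGFniceForm}, and whose remaining columns still have unit norm since $U$ is an isometry.

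Next I need to verify that $G$ is still an EGF, i.e., that the matrix $F$ from \eqref{eqn_matrixF} associated with $G$ is nonsingular. Each row of $F$ is, up to reordering of entries, the vectorization of the rank-1 matrix $\xi\xi\conjTrans$ associated with a column $\xi$ of $G$. Under the transformation $\xi\mapsto U\xi$, one has $\xi\xi\conjTrans\mapsto U\xi\xi\conjTrans U\conjTrans$, which acts linearly on $\mathrm{vec}(\xi\xi\conjTrans)$ via the invertible matrix $U\otimes\overline{U}$ (or, after the appropriate permutation of coordinates, a similar invertible linear map). Consequently $F$ (for $G$) and $F^0$ (for $G^0$) differ by right multiplication with an invertible matrix, so nonsingularity is preserved and $G$ is an EGF.

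Finally, assume for contradiction that one of $u_2,w_2,v_2$ vanishes, say $u_2=0$. Then the submatrix $G_{:,\{1,2\}}=\begin{bmatrix}1 & u_1\\ 0 & 0\end{bmatrix}$ is singular, contradicting \Cref{lemma_invertMat}. The same argument rules out $w_2=0$ and $v_2=0$, completing the proof. The only nontrivial step is verifying that the EGF condition is preserved under unitary change of basis, which is a short linear-algebra computation as indicated above.
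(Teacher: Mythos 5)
Your proof follows essentially the same route as the paper's: start from an arbitrary EGF factorization $A=(G^0)\conjTrans G^0$, left-multiply by a unitary $U$ sending the first column to $\mathbf{e}$, and then invoke \Cref{lemma_invertMat} to get $u_2,w_2,v_2\neq 0$. The only difference is cosmetic---you build $U$ by completing to an orthonormal basis where the paper writes an explicit $2\times 2$ unitary, and you spell out (via the $\overline{U}\otimes U$ vectorization argument) why $F$ remains nonsingular, a step the paper leaves implicit.
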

\begin{proof}
    Since $A \in P$, there exists an EGF $\widetilde{G}$ such that $A = \widetilde{G}\conjTrans \widetilde{G}$. Let $z := \widetilde{G}_{:,1} \in \mathbb{C}^2$, and consider the matrix
       $Q := \begin{bmatrix}
            \overline{z}_1 & \overline{z}_2 \\
            -z_2 & z_1
        \end{bmatrix}.$
    It is easy to see that $Q$ is unitary, and $Qz = [1,0]^\top$. Then $G := Q \widetilde{G}$ is an EGF satisfying the properties of the lemma. Note that the entries $u_{2}$,  $w_{2}$ and $v_{2}$ are nonzero, because each 2 by 2 submatrix of $G$ must be invertible (\Cref{lemma_invertMat}).
\end{proof}
In the sequel, we will thus only consider EGFs of the form \eqref{eqn_EGFniceForm}. Note that this simplifies matrix $F$ from~\eqref{eqn_matrixF}. We are now ready to prove the following.
\begin{theorem}
    \label{thm_noR2EPinLE}
    For $P$ as in \eqref{eqn_extremalRank2Matrices}, we have $          \liftingSet{1} \cap P = \emptyset.$   
    \end{theorem}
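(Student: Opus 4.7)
The strategy is to suppose for contradiction that some $X$ lies in $\liftingSet{1} \cap P$, and to derive that the EGF determinant $\det F$ must vanish, contradicting the definition of $P$. By \Cref{lemma_simplePara}, write $X = G\conjTrans G$ with $G = [\mathbf{e}, u, w, v]$ and $u_2, w_2, v_2 \ne 0$, and pick $Z \in \mathcal{F}(\mathscr{B}_1)$ satisfying $Z_{1:4,1:4} = X$. Factor $Z = H\conjTrans H$ with $H \in \mathbb{C}^{r \times 6}$, $r = \rankOp(Z)$. Since $X$ has rank $2$, the columns $h_1,\ldots,h_4$ of $H$ span a $2$-dimensional subspace, and the unitary freedom in $H$ lets us align the first four columns to have $G$ in the top two coordinates and zeros below. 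Decompose $h_k = (g_k\,;\,h_k^\perp)$ for $k \in \{5,6\}$, with $g_k \in \mathbb{C}^2$.

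The moment equalities $Z_{5,1} = \overline{Z_{2,3}}$ and $Z_{5,3} = Z_{1,2}$ depend only on $g_5$ (since the $\perp$-components of $h_1, h_3$ vanish), and using $w_2 \ne 0$ they uniquely determine both of its coordinates as explicit functions of $u$ and $w$. Lagrange's identity in $\mathbb{C}^2$, $|u\conjTrans w|^2 + |\det[u,w]|^2 = \|u\|^2\|w\|^2 = 1$, then yields $|g_5|^2 = 1$, which combined with $\|h_5\|^2 = 1$ forces $h_5^\perp = 0$. An analogous analysis of $Z_{6,1}$ and $Z_{6,4}$ pins down $g_6 \in \mathbb{C}^2$ and shows $h_6^\perp = 0$; in particular $\rankOp(Z) = 2$.

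The only remaining non-tautological moment equality is $Z_{5,6} = Z_{3,4}$, i.e.,
\begin{equation*}
g_5\conjTrans g_6 \;=\; \bar{w}_1 v_1 + \bar{w}_2 v_2.
\end{equation*}
Substituting the explicit formulas for $g_5, g_6$ and collecting terms reduces this, after clearing denominators by $w_2 \bar{v}_2$, to
\begin{equation*}
|w_2|^2 \phi(u,v) - |v_2|^2 \phi(u,w) - |u_2|^2 \phi(w,v) \;=\; 0, \qquad \phi(a,b) := \mathrm{Im}(a_1 \bar{a}_2 \bar{b}_1 b_2).
\end{equation*}
The critical identification is that the same expression arises in $\det F$: expanding along the first row of $F$ (which has only one nonzero entry since $G_{:,1} = \mathbf{e}$) and then along the third column of the resulting $3 \times 3$ minor yields
\begin{equation*}
\det F \;=\; -2\imagUnit\bigl[|w_2|^2 \phi(u,v) - |v_2|^2 \phi(u,w) - |u_2|^2 \phi(w,v)\bigr].
\end{equation*}
Hence the constraint forces $\det F = 0$, contradicting that $G$ is an EGF and hence that $X \in P$. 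The main obstacle in executing this plan is the algebraic identification of the lifting constraint with $\det F$; the reduction to $\rankOp(Z) = 2$ is essentially automatic once Lagrange's identity is applied.
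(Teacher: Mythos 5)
Your proposal follows the same strategy as the paper's proof: parametrize $X = G\conjTrans G$ via \Cref{lemma_simplePara}, use the moment equalities to uniquely determine the fifth and sixth columns of the Gram factor, show they have unit norm in the $G$-span (forcing $\rankOp(Z)=2$), and then derive $\det F = 0$ from the remaining constraint $Z_{5,6}=Z_{3,4}$, contradicting the EGF property. Your use of Lagrange's identity $|u\conjTrans w|^2 + |\det[u,w]|^2 = 1$ (after noting $|g_{52}| = |\det[u,w]|$) is a cleaner route to $\|z_5\|=1$ than the paper's direct expansion, but the argument is otherwise the same.
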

\begin{proof}
Let $A \in P$. Then, without loss of generality, $A = G\conjTrans G$, where $G$ is an EGF  of the form \eqref{eqn_EGFniceForm}. Proof by contradiction: suppose $A \in \liftingSet{1}$. Then $\exists Z \in \mathcal{F}(\mathscr{B}_1)$, see \eqref{eqn_posDefMomSet}, satisfying $Z_{1:4,1:4} = A$. Let $\ell \in \{5,6\}$ and denote by $Z_{\ell}$ the $5 \times 5$ principal submatrix of $Z$, with rows and columns indexed by $[4] \cup \ell$. Since $Z_{\ell} \succeq 0$, there exists a matrix $G_{\ell}$ such that $Z_{\ell} = G\conjTrans_{\ell} G_{\ell}$. We may assume that $G_{\ell}$ is of the form
    \begin{align}
        \label{eqn_vEllNorm}
        G_{\ell} = \begin{bmatrix}
            G & z_{\ell} \\
            \mathbf{0}^\top_4 & \alpha_{\ell}
        \end{bmatrix}, \text{ with } z_{\ell} \in \mathbb{C}^2, \, \alpha_{\ell} \in \mathbb{C} \text{ and } z_{\ell}\conjTrans z_{\ell} + |\alpha_{\ell}|^2 = 1.
    \end{align}
    Note that the last column of $Z_{\ell}$ is then given by $\begin{bmatrix}z_{\ell}\conjTrans G & 1 \end{bmatrix}\conjTrans$. Moreover, for each $\ell \in \{5,6\}$, precisely two of the entries in $G\conjTrans z_{\ell}$ are determined by $A$. For example, if $\ell = 5$, then we have
    \begin{align}
    \label{eqn_v5system}
        G\conjTrans_{:,\{1,3\}}z_{5} = \begin{bmatrix}
            u\conjTrans w \\ u\conjTrans \mathbf{e}
        \end{bmatrix}, \text{ with } G\conjTrans_{:,\{1,3\}} = \begin{bmatrix}
            \mathbf{e} & w
        \end{bmatrix}\conjTrans = \begin{bmatrix}
            1 & 0 \\
            \overline{w}_{1} & \overline{w}_{2} 
        \end{bmatrix} .
    \end{align}
    The above equations follow from the pattern of equalities in \eqref{eqn_big7Matrix}. In particular, $(G\conjTrans z_{5} )_1 = L_{\mathsf{y}}( x_1 \overline{x_2}) = A_{2,3} = (G\conjTrans G)_{2,3} = u\conjTrans w$.
    
    By \Cref{lemma_invertMat}, $G\conjTrans_{:,\{1,3\}}$ is invertible, hence $z_5$ is uniquely determined by this equation, and thus
    \begin{align}
        z_5 &=  
        \begin{bmatrix}
              u\conjTrans w \\ 
              \big(
              \overline{u}_{1}-\overline{w}_{1}\,
               u\conjTrans w \big)  / \overline{w}_{2} \end{bmatrix}
    \end{align}
        We now claim that $\| z_5 \| = 1$, in which case $\alpha_5 = 0$, by \eqref{eqn_vEllNorm}. To verify this claim, we compute first
        \begin{align}
        \label{eqn_intermediateRealTerm}
            \text{ Re}(u_{1} \overline{w}_{1} u\conjTrans w) = |u_{1}|^2 |w_{1}|^2 + \text{ Re}(u_{1} \overline{w}_{1} \overline{u}_{2} w_{2}),
        \end{align}
        which is a term appearing in the computation of $\| z_5 \|^2 = z_5\conjTrans z_5$. Thus, using \eqref{eqn_intermediateRealTerm}, we find
    \begin{align}
        z\conjTrans_5 z_5 &= |u\conjTrans w |^2 + \frac{|w_{2,1}|^2  +|w_{1}|^2 |u\conjTrans w |^2 - 2 \text{ Re}(u_{1} \overline{w}_{1} u\conjTrans w)}{|w_{2}|^2 } \\
        &= \frac{|u_{1}|^2  +|u\conjTrans w |^2 - 2 \text{ Re}(u_{1} \overline{w}_{1} u\conjTrans w)}{|w_{2}|^2 } \\
        &= \frac{|u_{1}|^2  + |u_{1}|^2 |w_{1}|^2 + |u_{2}|^2 |w_{2}|^2 + 2 \text{ Re}( \overline{u}_{1} w_{1} u_{2} \overline{w}_{2} )- 2 \text{ Re}(u_{1} \overline{w}_{1} u\conjTrans w)}{|w_{2}|^2 } \\
        &= \frac{|u_{1}|^2 - |u_{1}|^2 |w_{1}|^2 + (1-|u_{1}|^2)(1-|w_{1}|^2)}{|w_{2}|^2 } = 1.
    \end{align}
    Vector $z_6$ satisfies the system
    \begin{align}
       G\conjTrans_{:,\{1,4\}}  z_6 =  \begin{bmatrix}
            u\conjTrans v \\
            u\conjTrans \mathbf{e}
        \end{bmatrix},
    \end{align}
    which is similar to \eqref{eqn_v5system}. It is therefore also straightforward to show that $\| z_6 \| = 1$. This implies that $Z$ is of the form
    \begin{align}
        \label{eqn_Zfactorization}
        Z = V\conjTrans V, \text{ for } V = \begin{bmatrix}
            \mathbf{e} & u & w & v & z_5 & z_6
        \end{bmatrix}.
    \end{align}
    Now $Z \in  \mathcal{F}(\mathscr{B}_1) \Rightarrow Z_{5,6} = Z_{3,4} = w\conjTrans v$, see \eqref{eqn_big7Matrix}, while \eqref{eqn_Zfactorization} implies that $Z_{5,6} = z_5\conjTrans z_6$. Thus, it must hold that $z_5\conjTrans z_6$ equals $w\conjTrans v$. However, we have  
        $z_5\conjTrans z_6 = w\conjTrans v + {\text{det}(F)}/{ w_{2} \overline{v}_{2}}.$    
    Since $G$ is an EGF, $\text{det}(F) \neq 0$ (\Cref{def_EGF}) which provides the desired contradiction.
\end{proof}

We now provide a result on $\cutPolytopeNoSS^4_\infty$, showing that it contains all rank 2 points of $\mathcal{E}^4_\infty$, if these are not extreme. For this result, we require the notion of a \textit{perturbation}, see \cite{li1994note}. We say that a nonzero Hermitian matrix $B$ is a perturbation of some $A \in \mathcal{E}^n_\infty$, if there exists some $t > 0$ such that $A \pm t B \in \mathcal{E}^n_\infty$. Thus, if $A$ admits some perturbation $B$, it is not an extreme point of $\mathcal{E}^n_\infty$. Additionally, if $A = G\conjTrans G$, then the perturbation is of the form $B = G\conjTrans R G$ \cite[Theorem 1(a)]{li1994note}), with $\text{diag}(B) = \mathbf{0}$ and $R$ Hermitian.

The following result is also given in \cite{li1994note}, in the proof of the sufficiency part of Corollary 4.
\begin{lemma}
    \label{lemma_rank2PointsInCut}
    Let $A \in \mathcal{E}^4_\infty$, $\mathrm{rk}(A) = 2$. If $A$ is not an extreme point of $\mathcal{E}^4_\infty$ (i.e., $A \notin P$), then $A \in \cutPolytopeNoSS^4_\infty$.
\end{lemma}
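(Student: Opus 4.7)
The plan is to exploit the fact that a non-extreme rank 2 point of $\mathcal{E}^4_\infty$ admits a nonzero perturbation of a very restrictive form, which forces the Gram factor to lie on a real $2$-torus and yields an explicit decomposition of $A$ as a convex combination of two rank 1 cut matrices.

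Concretely, I would write $A = G\conjTrans G$ with $G \in \mathbb{C}^{2\times 4}$, whose columns $g_1,\ldots,g_4$ have norm $1$ (since $A_{ii}=1$). Since $A \notin P$, the perturbation characterization recalled just before the lemma (cf.\ \cite[Theorem 1(a)]{li1994note}) provides a nonzero Hermitian $R \in \mathcal{H}^2$ with $g_i\conjTrans R g_i = 0$ for every $i \in [4]$. The first key step is to show that $R$ must be \emph{indefinite}. If $R$ were positive or negative semidefinite, then $R = \pm vv\conjTrans$ for some nonzero $v \in \mathbb{C}^2$, so $g_i\conjTrans R g_i = 0$ would force $v\conjTrans g_i = 0$ for every $i$; but $\rankOp(A) = 2$ implies that $g_1,\ldots,g_4$ span $\mathbb{C}^2$, a contradiction. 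Rescaling $R$ to have eigenvalues $\pm 1$ and diagonalising it by a unitary $U$, I would then replace $G$ by $U\conjTrans G$: this leaves $A = G\conjTrans G$ unchanged and brings $R$ into the canonical form $\mathrm{diag}(1,-1)$.

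In these new coordinates the orthogonality condition $g_i\conjTrans R g_i = 0$ becomes $|g_{i,1}|^2 = |g_{i,2}|^2$, and combined with $\|g_i\|^2 = 1$ it yields $|g_{i,1}|^2 = |g_{i,2}|^2 = \tfrac12$ for every $i$. Hence $g_{i,k} = \tfrac{1}{\sqrt{2}} e^{\imagUnit \theta_{i,k}}$ for some angles $\theta_{i,k} \in \mathbb{R}$, $k \in \{1,2\}$. Define $u, w \in \mathbb{C}^4$ by $u_i := e^{-\imagUnit \theta_{i,1}}$ and $w_i := e^{-\imagUnit \theta_{i,2}}$, both vectors with unit-modulus entries. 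A direct computation gives $A_{ij} = g_i\conjTrans g_j = \tfrac12 e^{\imagUnit(\theta_{j,1}-\theta_{i,1})} + \tfrac12 e^{\imagUnit(\theta_{j,2}-\theta_{i,2})} = \tfrac12 (uu\conjTrans)_{ij} + \tfrac12 (ww\conjTrans)_{ij}$, so $A = \tfrac12 uu\conjTrans + \tfrac12 ww\conjTrans \in \cutPolytopeNoSS^4_\infty$ as required.

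The main obstacle is establishing the indefinite normal form for $R$, since that is what collapses the problem from a rank 2 Gram factor in $\mathbb{C}^2$ to a convex combination of two phase-vectors in $\mathbb{C}$. The rank hypothesis enters precisely here, as it is what makes $g_1,\ldots,g_4$ span $\mathbb{C}^2$ and thereby rules out $R$ being semidefinite; once $R$ is diagonalised as $\mathrm{diag}(1,-1)$, the remaining calculation is essentially algebraic bookkeeping with phases.
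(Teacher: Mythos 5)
Your proposal is correct and takes essentially the same route as the paper: write $A = G\conjTrans G$, use the perturbation $R$, observe that $R$ must be indefinite, and split $A$ as a convex combination of two rank~1 cut matrices obtained by moving in the $\pm B$ directions. The paper argues directly with $I + t_1 R$ and $I - t_2 R$ rank~1, while you first diagonalize $R$ to $\mathrm{diag}(1,-1)$ and read off the phase decomposition explicitly; this is the same argument with $t_1 = t_2 = 1$ made concrete. One small slip: ``$R$ semidefinite $\Rightarrow R = \pm vv\conjTrans$'' is false for a positive definite $R$; you should instead argue that $R \succeq 0$ together with $g_i\conjTrans R g_i = 0$ forces every $g_i$ into $\ker R$, which, since the $g_i$ span $\mathbb{C}^2$, gives $R = 0$ — but this does not change the conclusion.
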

\begin{proof}
    We write $A = G\conjTrans G$, where $G \in \mathbb{C}^{2 \times 4}$. Since $A$ is not extreme, it admits a nonzero perturbation $B~=~G\conjTrans R G$, for some $R \in \mathcal{H}^2$. Note that $R$ must be indefinite. Then, there exist strictly positive numbers $t_1, t_2 \in \mathbb{R}$ such that both $I + t_1 R$ and $I - t_2 R$ are rank 1. It follows that the matrix $A + t_1 B = G\conjTrans (I + t_1 R) G$, is also rank 1, and thus contained in $\cutPolytopeNoSS^4_\infty$. Similarly, also $A - t_2 B \in \cutPolytopeNoSS^4_\infty$. Then
    \begin{align}
        \label{eqn_rank2ConvexDecomp}
        A = \frac{t_2}{t_1+t_2}\underbrace{(A+t_1 B)}_{\in \cutPolytopeNoSS^4_\infty} + \frac{t_1}{t_1+t_2}\underbrace{(A-t_2 B)}_{\in \cutPolytopeNoSS^4_\infty} \in \cutPolytopeNoSS^4_\infty.
        \\[-35pt]
    \end{align}
\end{proof}
Equation \eqref{eqn_rank2ConvexDecomp} also shows that $A$ can be written as the convex combination of two extreme points of $\cutPolytopeNoSS^4_\infty$. More generally, it is known that any $A \in \cutPolytopeNoSS^n_\infty$, can be written as a convex combination of at most $n^2 -n+1$ extreme points of $\cutPolytopeNoSS^n_\infty$ \cite[Lemma 3]{jarre2020set}, which follows from Carath\'eodory’s theorem. It is stated in \cite{jarre2020set} that `a smaller bound would help in reducing
the size of the problem for finding a nearest matrix in $\cutPolytopeNoSS^n_\infty$'. We provide such a smaller (optimal) bound in the following result, for general $n$.
\begin{theorem}
    For any $A \in \cutPolytopeNoSS^n_\infty$, there exist $r := \rankOp(A)$ rank one matrices $A_1,\dots, A_r \in \cutPolytopeNoSS^n_\infty$,  such that $A \in \ConvHull\{ A_1, \ldots, A_r \}$.
\end{theorem}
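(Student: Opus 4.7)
I proceed by induction on $r := \rankOp(A)$. The base case $r=1$ is immediate: $A \succeq 0$ with $\rankOp(A)=1$ and $A_{ii}=1$ forces $A = xx\conjTrans$ with $|x_i|=1$, so $A$ is trivially a convex combination of one rank-one matrix in $\cutPolytopeNoSS^n_\infty$.

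For $r \geq 2$ it suffices to produce $x \in \mathbb{C}^n$ with $|x_i|=1$ and $t \in (0,1)$ such that $A = t\,xx\conjTrans + (1-t)A'$ with $A' \in \cutPolytopeNoSS^n_\infty$ of rank $r-1$, for then the induction hypothesis decomposes $A'$ into $r-1$ rank-one matrices in $\cutPolytopeNoSS^n_\infty$ and yields an $r$-term representation of $A$. To construct $x$, I start from any finite convex representation $A = \sum_{k=1}^m \lambda_k y^{(k)}(y^{(k)})\conjTrans$, with $\lambda_k>0$ and $|y^{(k)}_i|=1$, that exists by $A \in \cutPolytopeNoSS^n_\infty$ together with Carath\'eodory. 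Each $y^{(k)}$ lies in $\text{range}(A)$, so the Moore--Penrose pseudoinverse satisfies
\begin{align*}
    \sum_{k=1}^m \lambda_k\, (y^{(k)})\conjTrans A^{+} y^{(k)} \;=\; \text{tr}(A^{+}A) \;=\; r.
\end{align*}
Once the decomposition has been shortened so that $m \leq r$, an averaging argument produces an index $k_0$ with $\lambda_{k_0} \geq t^{*} := 1/((y^{(k_0)})\conjTrans A^{+} y^{(k_0)})$. Setting $x := y^{(k_0)}$ and $t := t^{*}$, maximality of $t^{*}$ for positive semidefiniteness gives $\rankOp(A - t\,xx\conjTrans) = r-1$, while the nonnegative rank-one expansion
\begin{align*}
    A - t\,xx\conjTrans \;=\; (\lambda_{k_0}-t)\,xx\conjTrans \;+\; \sum_{k \neq k_0} \lambda_k\, y^{(k)}(y^{(k)})\conjTrans,
\end{align*}
combined with the diagonal $(1-t)\mathbf{1}_n$ of the left-hand side, shows that $A' := (A - t\,xx\conjTrans)/(1-t) \in \cutPolytopeNoSS^n_\infty$.

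The main obstacle is the preliminary reduction of the convex decomposition to at most $r$ terms. Naive Carath\'eodory applied to the minimal face of $\cutPolytopeNoSS^n_\infty$ containing $A$ only yields the weaker bound $m \leq r^2$, and the analogous statement in the real setting is genuinely false: for example $I_3 \in \cutPolytopeNoSS^3_2$ cannot be written as a convex combination of fewer than four rank-one $\pm 1$ matrices. Closing the gap must therefore exploit the continuous phase freedom available to complex unit-modulus vectors that is absent in $\{\pm 1\}^n$, for instance by combining the rotational and conjugation invariances of \Cref{lemma_ROCresult,lemma_ROCresult2} with an iterative perturbation in the spirit of the proof of \Cref{lemma_rank2PointsInCut}, trimming the support of $\lambda$ one entry at a time along directions in the fiber of decompositions of $A$ that preserve both the sum-to-$A$ and unit-modulus constraints.
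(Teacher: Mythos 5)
Your proposal has a fatal circularity. The step you flag as the ``main obstacle''---shortening the convex representation $A = \sum_{k=1}^m \lambda_k\, y^{(k)}(y^{(k)})\conjTrans$ so that $m \leq r$---\emph{is} the theorem: a convex representation of $A$ using at most $r$ rank-one matrices from $\cutPolytopeNoSS^n_\infty$ is exactly the statement $A \in \ConvHull\{A_1,\ldots,A_r\}$. Once you have $m \leq r$, the Moore--Penrose trace identity, the averaging bound, and the peeling step are all superfluous (you can simply stop); conversely, without it, the averaging inequality $r/m \geq 1$ never launches, so the rest of the argument has nothing to stand on. Your diagnosis of the difficulty is correct and useful---the observation that $I_3 \in \cutPolytopeNoSS^3_2$ needs four rank-one $\pm 1$ matrices shows the real analogue is false, so any proof must exploit the continuous phase freedom of unit-modulus complex vectors---but the sketch for closing the gap (combining \Cref{lemma_ROCresult,lemma_ROCresult2} with an ``iterative perturbation in the spirit of \Cref{lemma_rank2PointsInCut}'') is not an argument for the one thing the proof actually needs.

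The paper's proof is also an induction on $\rankOp(A)$, but it never has to bound the number of terms in a Carath\'eodory representation. Given $A$ with $\rankOp(A) = r \geq 2$ (hence not extreme), it picks an extreme point $A_1$ of $\cutPolytopeNoSS^n_\infty$ such that the segment from $A_1$ through $A$ extends strictly past $A$ inside the polytope, and lets $C$ be its far endpoint, so that $A$ is a strict convex combination of $A_1$ and $C$. Because a positive combination of PSD matrices has range equal to the sum of the ranges, $\rankOp(C) \in \{r-1, r\}$. If $\rankOp(C) = r-1$, the induction hypothesis applied to $C$ combined with $A \in \ConvHull\{A_1,C\}$ finishes the step; if $\rankOp(C) = r$, the equality of ranks forces $\mathrm{range}(A_1) \subseteq \mathrm{range}(C)$, and the same decomposition applied to $C$ produces a $\widetilde{C}$ of rank $r-1$ with $C \in \ConvHull\{A_1,\widetilde{C}\}$, to which the induction hypothesis applies. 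This geometric route entirely bypasses the question of how few rank-one terms a generic decomposition of $A$ can have a priori---which is exactly where your version gets stuck.
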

\begin{proof}
    We fix some $n \in \mathbb{N}$, and prove the result by induction. The base case $r = 1$ clearly holds. We assume the induction hypothesis and consider the case $\rankOp(A) = r+1$. Let $A_1$ be any extreme point of $\cutPolytopeNoSS^n_\infty$ such that
    \begin{align}
        \lambda^* := \max \left\{ \lambda  \, \middle| \, (1-\lambda) A_1 + \lambda A \in \cutPolytopeNoSS^n_\infty \right\} > 1,
    \end{align}
    and define $C := (1-\lambda^*) A_1 + \lambda^* A$. Such a matrix $A_1$ exists, since $A$ is not an extreme point of $\cutPolytopeNoSS^n_\infty$ (due to its rank being strictly greater than 1). The matrices $A_1$ and $C$ are the endpoints of a line segment in $\cutPolytopeNoSS^n_\infty$, through $A$. By construction, $C \in \cutPolytopeNoSS^n_\infty$ and $\lambda^* > 1$. Hence,
    \begin{align}
        \label{eqn_AsmallConvHull}
        A = \frac{\lambda^*-1}{ \lambda^*} A_1 + \frac{1}{\lambda^*} C   \Rightarrow A \in \ConvHull\{ A_1, C \}.
    \end{align}
Since $\rankOp(A) = r+1$ and $\rankOp(A_1) = 1$, the rank of $C$ is either $r$ or $r+1$. If $\rankOp(C) = r$, the result follows trivially from \eqref{eqn_AsmallConvHull} and the induction hypothesis. In the case that $\rankOp(C) = r+1$, we have
\begin{align}
    \rankOp\left(\frac{\lambda^*-1}{ \lambda^*} A_1 + \frac{1}{\lambda^*} C\right) = \rankOp(C) \Rightarrow C \in \ConvHull\{ A_1, \widetilde{C} \},
\end{align}
for some $\widetilde{C} \in \cutPolytopeNoSS^n_\infty$ with $\rankOp(\widetilde{C}) = r$. Applying the induction hypothesis on $\widetilde{C}$ proves the result.
\end{proof}
Let us now return to the case $n = 4$, specifically the relation between $\cutPolytopeNoSS^4_\infty$ and $\liftingSet{1}$. We have the following result.

\begin{restatable}{lemma}{finalRankThreeException}
\label{lemma_finalRankThreeExcept}
    The set $\, \cutPolytopeNoSS^4_\infty$ is strictly contained in $\liftingSet{1}$ if and only if there exists a matrix $Y$ satisfying the following: $\rankOp(Y) = 3$, $Y \in  \liftingSet{1} \setminus \cutPolytopeNoSS^4_\infty$, $Y \in \partial \mathcal{E}^4_\infty$, and $Y = \lambda J_4 + (1-\lambda) A$, for some $\lambda \in (0,1)$ and $A \in P$, see \eqref{eqn_extremalRank2Matrices}.
\end{restatable}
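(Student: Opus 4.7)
The $(\Leftarrow)$ direction is immediate: any $Y$ with the listed properties lies in $\liftingSet{1}\setminus \cutPolytopeNoSS^4_\infty$, witnessing strict containment.

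For $(\Rightarrow)$, assume $\cutPolytopeNoSS^4_\infty\subsetneq \liftingSet{1}$ and fix any $Z\in \liftingSet{1}\setminus \cutPolytopeNoSS^4_\infty$. I first narrow $\rankOp(Z)$: rank 1 members of $\mathcal{E}^4_\infty$ lie in $\cutPolytopeNoSS^4_\infty$ by definition, \Cref{thm_noR2EPinLE} excludes $Z\in P$, and \Cref{lemma_rank2PointsInCut} places the remaining rank 2 elements of $\mathcal{E}^4_\infty$ in $\cutPolytopeNoSS^4_\infty$. Hence $\rankOp(Z)\in\{3,4\}$.

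The construction of $Y$ is parametric over $P$. For each $A\in P$, define the segment $L_A(\lambda):=\lambda J_4+(1-\lambda)A$, $\lambda\in[0,1]$. Since $A\notin \liftingSet{1}$ by \Cref{thm_noR2EPinLE} while $J_4\in \cutPolytopeNoSS^4_\infty\subseteq \liftingSet{1}$, convexity and closedness of $\liftingSet{1}$ and $\cutPolytopeNoSS^4_\infty$ yield closed intervals $[\lambda^*(A),1]$ and $[\mu^*(A),1]$ on which $L_A$ lies in $\liftingSet{1}$ and $\cutPolytopeNoSS^4_\infty$, respectively, with $0<\lambda^*(A)\leq\mu^*(A)$. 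When some $A\in P$ satisfies $\lambda^*(A)<\mu^*(A)$, the matrix $Y:=L_A(\lambda^*(A))$ lies in $\liftingSet{1}\setminus \cutPolytopeNoSS^4_\infty$ and has the prescribed form. The rank and boundary requirements then follow: $\rankOp(Y)\in\{2,3\}$ because $\rankOp(J_4)=1$ and $\rankOp(A)=2$; if $\rankOp(Y)=2$, then $Y$ lies in the relative interior of the segment $[A,J_4]$ and is therefore not extreme in $\mathcal{E}^4_\infty$, so \Cref{lemma_rank2PointsInCut} would place $Y$ in $\cutPolytopeNoSS^4_\infty$, contradicting $\lambda^*(A)<\mu^*(A)$; hence $\rankOp(Y)=3$ and $Y\in\partial\mathcal{E}^4_\infty$.

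The main obstacle is exhibiting some $A\in P$ with $\lambda^*(A)<\mu^*(A)$, and my plan is to extract $A$ from the witness $Z$ in two stages. First, reduce to the case $\rankOp(Z)=3$: if $Z$ has rank 4, move along the line $(1+t)Z-tJ_4$, which has unit diagonal throughout; the computation $\mathbf{1}\conjTrans((1+t)Z-tJ_4)\mathbf{1}/4=(1+t)(\mathbf{1}\conjTrans Z\mathbf{1}/4)-4t$, together with the fact that $Z-J_4$ is indefinite whenever $Z\neq J_4$ (since a PSD matrix with zero diagonal is zero), ensures a zero eigenvalue develops at some finite $t^*>0$, and the direction $Z-J_4$ keeps the trajectory outside $\cutPolytopeNoSS^4_\infty$. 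Second, when $\rankOp(Z)=3$, the kernel of $Z$ determines a proper face $F$ of $\mathcal{E}^4_\infty$ containing $Z$ in its relative interior; any extreme decomposition of $Z$ in $F$ must involve at least one rank 2 extreme point $A\in P\cap F$, for otherwise $Z$ would itself lie in $\cutPolytopeNoSS^4_\infty$, and from this decomposition one reads off the pair $(A,\lambda^*(A))$ certifying $\lambda^*(A)<\mu^*(A)$. The hardest part of the argument is guaranteeing that the rank 4-to-3 reduction can be chosen to remain inside $\liftingSet{1}$ rather than prematurely exiting at a boundary of the lift that is not in $\partial\mathcal{E}^4_\infty$; making this precise requires a direct analysis of the lifted matrix in $\mathcal{F}(\mathscr{B}_1)$, invoking \Cref{thm_noR2EPinLE} to preclude rank 2 extreme detours on the way to $\partial\mathcal{E}^4_\infty$.
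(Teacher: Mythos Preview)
Your proposal contains a genuine gap at the core step. Granting a rank~3 witness $Z\in\liftingSet{1}\setminus\cutPolytopeNoSS^4_\infty$, you write $Z$ as a convex combination of extreme points of the face $F$ of $\mathcal{E}^4_\infty$ containing $Z$, correctly observe that at least one such extreme point $A$ lies in $P$, and then assert that ``from this decomposition one reads off the pair $(A,\lambda^*(A))$ certifying $\lambda^*(A)<\mu^*(A)$''. This does not follow: the segment $[A,J_4]$ need not pass through $Z$ (indeed $J_4$ need not even lie in $F$), and nothing in the decomposition forces the boundary of $\liftingSet{1}$ along $[A,J_4]$ to occur strictly before that of $\cutPolytopeNoSS^4_\infty$. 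You have also not ruled out that every such segment satisfies $\lambda^*(A)=\mu^*(A)$ while $\liftingSet{1}$ still strictly contains $\cutPolytopeNoSS^4_\infty$ elsewhere, since the union of these segments does not cover $\liftingSet{1}$. The rank~4 to rank~3 reduction you sketch has the analogous problem you already flag: the ray $(1+t)Z-tJ_4$ may exit $\liftingSet{1}$ before reaching $\partial\mathcal{E}^4_\infty$, and invoking \Cref{thm_noR2EPinLE} to ``preclude rank~2 extreme detours'' does not address exits through higher-rank boundary faces of the lift.

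The paper proceeds along entirely different lines. It first takes $Y$ to be an \emph{extreme point} of $\liftingSet{1}$ outside $\cutPolytopeNoSS^4_\infty$, and shows that the fibre $\{Z\in\mathcal{F}(\mathscr{B}_1):Z_{1:4,1:4}=Y\}$ is a face of the spectrahedron $\mathcal{F}(\mathscr{B}_1)$; since every face of a spectrahedron is exposed and extreme points of $\mathcal{F}(\mathscr{B}_1)$ have rank at most $\lfloor\sqrt{11}\rfloor=3$, one gets $\rankOp(Y)\le 3$ directly, bypassing any rank-reduction manoeuvre. The representation $Y=\lambda J_4+(1-\lambda)A$ is then obtained not by a face decomposition in $\mathcal{E}^4_\infty$ but by factoring $Y=B\conjTrans B$ with $B\in\mathbb{C}^{3\times 4}$ and producing, via a small CSDP feasibility problem and a rank-one result of Ai and Zhang, a unit vector $z\in\mathbb{C}^3$ with $|z\conjTrans x|$ constant over the four columns of $B$; after a unitary rotation and a diagonal automorphism of $\cutPolytopeNoSS^4_\infty$ (and of $\liftingSet{1}$), the Gram factor has constant real first row, which is exactly the form $\lambda J_4+(1-\lambda)A$. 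Neither of these two ideas appears in your outline.
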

\begin{proof}
    See \Cref{section_replacedProofs}.
\end{proof}

Unfortunately, we are not able to prove or disprove the existence of such rank 3 points. Numerical tests, see also \cite{jarre2020set}, lead us to the following conjecture:
\begin{conjecture}
    \label{conjecture_exactnessOfLifting}
    The second semidefinite lifting is exact for $\cutPolytopeNoSS^4_\infty$, i.e., $\liftingSet{1} = \cutPolytopeNoSS^4_\infty$.
\end{conjecture}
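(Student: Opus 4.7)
The plan is to combine the reduction of \Cref{lemma_finalRankThreeExcept} with a moment-matrix factorization argument in the spirit of \Cref{thm_noR2EPinLE}. By that lemma, it suffices to rule out the existence of a rank-$3$ matrix of the form $Y = \lambda J_4 + (1-\lambda) A$ with $\lambda \in (0,1)$, $A \in P$, satisfying $Y \in \liftingSet{1} \setminus \cutPolytopeNoSS^4_\infty$ and $Y \in \partial \elliptopeNoSS^4_\infty$. First, I would parametrize $A = G\conjTrans G$ via the normalized EGF form \eqref{eqn_EGFniceForm}; this, together with $\lambda$, yields an explicit low-dimensional real-algebraic family of candidate witnesses. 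The boundary condition $Y \in \partial \elliptopeNoSS^4_\infty$ translates into $\det(Y) = 0$, which removes one degree of freedom and pins down an algebraic relation between $\lambda$ and the columns $u, w, v$ of the EGF.

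Second, for any such $Y$, I would write $Y = H\conjTrans H$ with $H \in \mathbb{C}^{3 \times 4}$ and analyze potential completions $Z \in \mathcal{F}(\mathscr{B}_1)$ with $Z_{1:4,1:4} = Y$. Factoring $Z = V\conjTrans V$ with $V = \begin{bmatrix} H & z_5 & z_6 \end{bmatrix}$, the pattern of equalities encoded in $M_{\mathscr{B}_1}(\mathsf{y})$ (compare \eqref{eqn_big7Matrix} restricted to its $6 \times 6$ upper-left block) imposes two linear equations on each $z_\ell \in \mathbb{C}^3$, analogous to \eqref{eqn_v5system}. In contrast to \Cref{thm_noR2EPinLE}, where the relevant submatrix of $H$ was square and invertible, it is now a rectangular $3 \times 2$ matrix of full column rank, so each $z_\ell$ is determined only up to one complex degree of freedom along the kernel of the coefficient matrix. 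The remaining identity $Z_{5,6} = Z_{3,4}$, namely $z_5\conjTrans z_6 = Y_{3,4}$, together with $\|z_5\| = \|z_6\| = 1$, must then be shown to be inconsistent for every parameter choice.

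Concretely, I would write each $z_\ell = z_\ell^{\star} + t_\ell n_\ell$, where $z_\ell^{\star}$ is the minimum-norm particular solution, $n_\ell$ spans the one-dimensional null space, and $t_\ell \in \mathbb{C}$. The unit-norm constraints cut out a circle in each $t_\ell$-plane, and the cross-constraint becomes a single complex-algebraic equation in $(t_5, t_6)$. The hope is that the EGF non-degeneracy $\det(F) \neq 0$ together with the boundary condition $\det(Y) = 0$ forces this equation to be inconsistent on the product of circles, thereby yielding the contradiction in a manner mirroring the closing computation of \Cref{thm_noR2EPinLE}.

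The principal obstacle is precisely the extra complex freedom in each completion vector. In the rank-$2$ case of \Cref{thm_noR2EPinLE}, the uniqueness of $z_5, z_6$ made the contradiction immediate; here the parameters $t_5, t_6$ could in principle absorb the consistency requirement, in which case $\liftingSet{1}$ would strictly contain $\cutPolytopeNoSS^4_\infty$ and the conjecture would fail. For this reason the plan should be pursued in tandem with exact symbolic manipulation on the parametric polynomial system, both to guide the analytic argument and to test whether the numerical evidence in \cite{jarre2020set} might be refined into an actual counterexample.
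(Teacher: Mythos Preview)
This statement is presented in the paper as an open \emph{conjecture}, not a theorem: the authors explicitly write that they ``are not able to prove or disprove the existence of such rank 3 points'' and leave \Cref{conjecture_exactnessOfLifting} unresolved, supported only by numerical evidence. There is therefore no proof in the paper to compare your attempt against.

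Your proposal is not a proof either, and to your credit you say so yourself. What you have written is a sensible research plan that correctly extends the machinery of \Cref{thm_noR2EPinLE} to the rank-$3$ setting isolated by \Cref{lemma_finalRankThreeExcept}: factor $Y = H\conjTrans H$ with $H \in \mathbb{C}^{3 \times 4}$, impose the moment-matrix equalities on the completion columns $z_5, z_6$, and try to derive a contradiction from $z_5\conjTrans z_6 = Y_{3,4}$. You also identify exactly the obstruction the paper runs into: in the rank-$2$ case the $2 \times 2$ coefficient matrices were invertible and forced $z_5, z_6$ uniquely, whereas here the $3 \times 2$ systems leave one complex degree of freedom in each $z_\ell$, so the final consistency equation lives on a product of circles rather than at a single point. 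Whether that equation is always insoluble, or whether some choice of $(t_5, t_6)$ rescues the completion, is precisely the question the paper could not settle.

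In short: there is no gap relative to the paper, because the paper proves nothing here; but your sketch stops at the same place the authors did, and the ``hope'' you articulate is the content of the conjecture, not a step toward its resolution.
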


 {\begin{remark}
    \Cref{conjecture_exactnessOfLifting} can be connected to the notion of a flat extension from the literature, see e.g.,~\cite{curto1998flat}. The term extension is related to the definition of completion as given in \Cref{def_completionDef}. In that definition, the larger matrix $X$ is considered an extension of the smaller matrix $\widetilde{X}$. Matrix $X$ is considered a flat extension of $\widetilde{X}$ if, along with the properties outlined in \Cref{def_completionDef}, we also have $\rankOp(X) = \rankOp(\widetilde{X})$. In \cite{josz2018lasserre}, there are several results related to such flat extensions, specifically for the bases $\mathbb{N}^p_r := \left\{ \alpha \in \mathbb{N}^p \, | \, \sum_{i=1}^p \alpha_i \leq r \right\}$. 
    Now \cite[Theorem 5.2]{josz2018lasserre} leads to an equivalent reformulation of \Cref{conjecture_exactnessOfLifting}. Namely, \Cref{conjecture_exactnessOfLifting} is true if and only if all $\widetilde{X} \in \liftingSet{1} \subseteq \elliptopeNoSS^n_\infty$ admit a flat extension $X \in \mathcal{F}(\mathbb{N}^3_{3})$, for $\mathcal{F}(\cdot)$ as in \eqref{eqn_posDefMomSet}.
\end{remark}}

We show now that all $X \in \liftingSet{1}$ satisfy a valid inequality for $\cutPolytopeNoSS^4_\infty$, found by the authors of \cite{jarre2020set}. This inequality is given as follows:
\begin{align}
    \label{eqn_Hcut}
    \langle H, X \rangle \leq 6 \quad \forall X \in \cutPolytopeNoSS^4_\infty, \text{ where } H = 
    \begin{bmatrix}
        0 & -\imagUnit & \imagUnit & 1 \\
        \imagUnit & 0 & -\imagUnit & 1 \\
         -\imagUnit & \imagUnit & 0 & 1 \\
          1&1&1&0 
    \end{bmatrix}.
\end{align}
The validity of this cut is proven in \cite{jarre2020set}, and we provide an alternative proof in \Cref{lemma_strengthOfH}.

It is shown in \cite{jarre2020set} that the inequality $\langle H, X \rangle \leq 6$ is not satisfied for all $X \in \mathcal{E}^4_\infty$. We show here that matrices in $\liftingSet{1}$ do satisfy this inequality.
\begin{lemma}
    \label{lemma_validCutForL}
    Let $X \in \liftingSet{1}$. Then $\langle H, X \rangle \leq 6$ for $H$ as in \eqref{eqn_Hcut}. Additionally, for all integers $m \geq 3$  or $m=\infty$, 
    \begin{align}
            \strFunc(H,m)   = \frac{2}{\sqrt{3}} \approx 1.15470.
    \end{align}
\end{lemma}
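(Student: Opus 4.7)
The lemma makes two independent claims: the validity of $\langle H,X\rangle\le 6$ on the lifting $\liftingSet{1}$, and the equality $\strFunc(H,m)=2/\sqrt 3$ for every integer $m\ge 3$ and for $m=\infty$. I treat them separately.

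\emph{Validity of the cut.} My first move is to invoke \Cref{lemma_equivalentSize6}, identifying $\liftingSet{1}$ with Jarre's lifting $\liftingSet{2}$; since the cut depends only on the upper-left $4\times 4$ block $Z_{1:4,1:4}$, which is common to both liftings, any dual SDP certificate on $\liftingSet{2}$ transfers verbatim to $\liftingSet{1}$. For a self-contained argument I exhibit a Hermitian positive semidefinite matrix $A\in\mathcal H^6_+$ on the basis $\mathscr B_1$ such that $\langle A,Z\rangle = 6 - \langle H, Z_{1:4,1:4}\rangle$ for every $Z\in\mathcal F(\mathscr B_1)$. Such an $A$ is found by matching coefficients of the finitely many independent moments $y_\delta$ appearing in $Z$, and it can be produced as the optimal dual variable of the SDP $\max\{\langle H,X\rangle\colon X\in\liftingSet{1}\}$; the inequality then follows from $\langle A,Z\rangle\ge 0$.

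\emph{Strength of the cut.} I compute the two maxima entering \eqref{eqn_strFunctionDef} separately. For the denominator $\max_{X\in\cutPolytope}\langle H,X\rangle=6$: the upper bound is the cut just proved, using $\cutPolytope\subseteq\cutPolytopeNoSS^4_\infty\subseteq\liftingSet{1}$, while the lower bound is attained at $x=\mathbf 1_4\in\mathcal B^4_m$ (valid for every $m\ge 2$, as $1\in\mathcal B_m$), with $x\conjTrans H x=\sum_{i,j}H_{ij}=6$ by direct calculation. For the numerator $\max_{X\in\mathcal E^4_m}\langle H,X\rangle=4\sqrt 3$: writing $H=\begin{pmatrix}T & \mathbf 1\\ \mathbf 1^\top & 0\end{pmatrix}$ with $T\mathbf 1=\mathbf 0$ and $T$ of eigenvalues $\{-\sqrt 3,0,\sqrt 3\}$, a short block-diagonalization shows that the eigenvalues of $H$ are $\pm\sqrt 3$, each of multiplicity two. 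Therefore $\sqrt 3\,I_4 - H\succeq 0$, and the dual SDP $\min\{\mathbf 1^\top\mu\colon\text{Diag}(\mu)\succeq H\}$ attains the value $4\sqrt 3$ at $\mu=\sqrt 3\,\mathbf 1_4$, bounding the numerator via $\mathcal E^4_m\subseteq\mathcal E^4_\infty$. A matching witness is $X^*=\tfrac 13 v_1 v_1\conjTrans + \tfrac 23 v_2 v_2\conjTrans$ with $v_1=(1,1,1,\sqrt 3)\conjTrans$ and $v_2=(1,e^{2\pi\imagUnit/3},e^{4\pi\imagUnit/3},0)\conjTrans$ both in the $+\sqrt 3$-eigenspace of $H$; direct calculation gives $\text{diag}(X^*)=\mathbf 1$, $\langle H,X^*\rangle=4\sqrt 3$, and off-diagonal entries $\sqrt 3/3$ and $\pm\imagUnit\sqrt 3/3$. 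An elementary geometric check places the imaginary entries on the boundary of $\ConvHull(\mathcal B_3)$, and strictly inside $\ConvHull(\mathcal B_m)$ for $m\ge 4$, so $X^*\in\mathcal E^4_m$ for every $m\ge 3$. The ratio equals $4\sqrt 3/6 = 2/\sqrt 3$.

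\emph{Main obstacle.} The principal technical difficulty lies in producing the explicit PSD dual certificate in the first part (the shortcut via \Cref{lemma_equivalentSize6} shifts this burden to \cite{jarre2020set}); the remainder of the proof reduces to a routine $4\times 4$ spectral and SDP-duality computation together with a short boundary inspection for $\ConvHull(\mathcal B_3)$.
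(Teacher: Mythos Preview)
Your treatment of the strength computation is correct and essentially identical to the paper's (Lemma~A2 in the appendix): the same upper bound $\sqrt 3\,I_4-H\succeq 0$, the same witness for the lower bound (your $X^*$ coincides entry-by-entry with the paper's matrix $Y$, as one checks from $(X^*)_{ij}\in\{1/\sqrt 3,\pm\imagUnit/\sqrt 3\}$), and the same $J_4$ for the denominator. Your derivation of $X^*$ as a convex combination of rank-one projections onto the $+\sqrt 3$-eigenspace of $H$ is in fact more illuminating than the paper's, which writes $Y$ down without motivation.

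The validity part, however, has a genuine gap. You write that you ``exhibit'' a matrix $A\in\mathcal H^6_+$ with $\langle A,Z\rangle=6-\langle H,Z_{1:4,1:4}\rangle$, but you do not exhibit it; you only describe how one might search for it (coefficient matching, or as a dual optimum). The existence of a \emph{positive semidefinite} $A$ satisfying that identity is precisely the non-trivial content of the claim---it is not guaranteed a priori, and invoking the dual optimum is circular, since you would first need to know that the primal optimum is $6$. The paper supplies this missing ingredient explicitly: it writes down a concrete $6\times 6$ Hermitian matrix $Q$ and verifies $Q\succeq 0$ via a Schur-complement calculation that reduces to a single rank-one $5\times 5$ block.

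Your proposed shortcut through \Cref{lemma_equivalentSize6} and \cite{jarre2020set} does not rescue the argument: as the paper states just before the lemma, \cite{jarre2020set} establishes $\langle H,X\rangle\le 6$ only for $X\in\cutPolytopeNoSS^4_\infty$, and shows it \emph{fails} on $\mathcal E^4_\infty$; the validity on the intermediate lifting $\liftingSet{2}$ is exactly what the present lemma contributes. So there is nothing in \cite{jarre2020set} to which the burden can be shifted.
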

\begin{proof}
    Let $X \in \liftingSet{1}$, and $Z \in \mathcal{F}(\mathscr{B}_1)$ be the matrix satisfying $Z_{1:4,1:4} = X$, see \eqref{eqn_twoLiftings}. We have
    \begin{align}
        \langle H, X \rangle = 6 - \langle Q, Z \rangle, \text{ where } Q = \frac{1}{2} {
            \begin{bmatrix} 4&0&-2\imagUnit&-2&2\imagUnit&-2\\
            0&0&0&0&0&0\\
            2\imagUnit&0&2&-1-\imagUnit&-1-\imagUnit&0\\
            -2&0&-1+\imagUnit&2&0&1-\imagUnit\\
            -2\imagUnit& 0   &-1+\imagUnit&0&2&-1+\imagUnit\\
            -2&0&0&1+\imagUnit&-1-\imagUnit&2
            \end{bmatrix}.}
    \end{align}
    We claim that $Q \succeq 0$. Then, since also $Z \succeq 0$, we have $6 - \langle Q, Z \rangle \leq 6$, which proves the lemma. To show that $Q \succeq 0$, we compute the Schur complement of $Q$ with respect to $Q_{11} = 2$. The resulting matrix is given by
    \begin{align}%
         \frac{1}{2}\begin{bmatrix}
0 & 0 & 0 & 0 & 0 \\
0 & 1 & -1 & -\imagUnit & \imagUnit \\
0 & -1 & 1 & \imagUnit & -\imagUnit \\
0 & \imagUnit & -\imagUnit & 1 & -1 \\
0 & -\imagUnit & \imagUnit & -1 & 1 
\end{bmatrix} = \frac{1}{2}\begin{bmatrix}
0 \\
\imagUnit \\
-\imagUnit \\
-1 \\
1 
\end{bmatrix} \begin{bmatrix}
0 \\
\imagUnit \\
-\imagUnit \\
-1 \\
1 
\end{bmatrix}\conjTrans \succeq 0.
    \end{align}
    Computing the strength of the inequality $\langle H, X \rangle \leq 6$ is left to the appendix, \Cref{lemma_strengthOfH}.   
\end{proof}
Additionally, elements of $\liftingSet{1}$ also satisfy all the infinite ROC equivalent cuts induced by $H$, see \Cref{lemma_ROCresult}.

To conclude this section, we provide a generalization of \Cref{lemma_equivalentSecondDegLift} for any $n \geq 4$. We define, for $p \geq 3$, bases
\begin{align}
    \label{eqn_secondLiftA}
    \widetilde{\mathscr{A}}^p := \left\{ \alpha \in \{0,1\}^p \, \middle| \, \sum_{i = 1}^p \alpha_i \leq 2 \right\} \subsetneq \mathscr{A}^p := \left\{ \alpha \in \{0,1,2\}^p \, \middle| \, \sum_{i = 1}^p \alpha_i \leq 2 \right\},
\end{align}
where the first $p+1$ elements are $\{ \mathbf{0}_{p} \}$ and the $p$ unit vectors.  Sets $\mathcal{F}(\widetilde{\mathscr{A}}^p)$ and $\mathcal{F}(\mathscr{A}^p)$ are defined analogously to \eqref{eqn_posDefMomSet}. Note that $\mathscr{A}^3 = \mathscr{B}_4$, for $\mathscr{B}_4$ as in \Cref{lemma_equivalentSecondDegLift}. The above bases can be used to approximate $\cutPolytopeNoSS^n_\infty$. If we define sets, for $n \geq 4$,
\begin{align}
    \label{eqn_secondLiftFull}
    \mathbf{L}^n(\widetilde{\mathscr{A}}) = \left\{ X \in \elliptopeNoSS^n_\infty \, \middle| \, \exists Z \in \mathcal{F}(\widetilde{\mathscr{A}}^{n-1}) \text{ satisfying } Z_{1:n,1:n} = X \right\},
\end{align}
and similarly $\mathbf{L}^n(\mathscr{A})$, then $\cutPolytopeNoSS^n_\infty \subseteq \mathbf{L}^n(\mathscr{A}) \subseteq \mathbf{L}^n(\widetilde{\mathscr{A}}) \subseteq \elliptopeNoSS^n_\infty$. We are now ready to present the following result.
\begin{restatable}{lemma}{degCompletionGeneral}
\label{lemma_degCompletionGeneral}
    $\mathbf{L}^n(\widetilde{\mathscr{A}}) = \mathbf{L}^n(\mathscr{A})$
\end{restatable}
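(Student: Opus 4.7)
The plan is to mimic the incremental completion technique developed in \Cref{lemma_equivalentSize6} and \Cref{lemma_equivalentSecondDegLift}, proving $\widetilde{\mathscr{A}}^{n-1} \models \mathscr{A}^{n-1}$ in the sense of \Cref{def_completionDef}. The inclusion $\mathbf{L}^n(\mathscr{A}) \subseteq \mathbf{L}^n(\widetilde{\mathscr{A}})$ is free because $\widetilde{\mathscr{A}}^{n-1} \subseteq \mathscr{A}^{n-1}$ and any $Z \in \mathcal{F}(\mathscr{A}^{n-1})$ restricts to an element of $\mathcal{F}(\widetilde{\mathscr{A}}^{n-1})$. For the reverse, set $p = n-1$ and define the chain $\mathscr{A}^p_k := \widetilde{\mathscr{A}}^p \cup \{2e_1,\ldots,2e_k\}$, $k = 0,\ldots,p$, so $\mathscr{A}^p_0 = \widetilde{\mathscr{A}}^p$ and $\mathscr{A}^p_p = \mathscr{A}^p$. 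I will show, by induction on $k$, that $\mathscr{A}^p_k \models \mathscr{A}^p_{k+1}$; transitivity then yields the result.

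In the inductive step I append one row/column indexed by $2e_{k+1}$ to $Z^{(k)} \in \mathcal{F}(\mathscr{A}^p_k)$. The moment-matrix identity $(Z)_{\alpha,\beta} = \mathsf{y}_{\alpha-\beta}$ forces the new entries only at positions where the difference $2e_{k+1}-\beta$ already appears elsewhere in $\mathscr{A}^p_k - \mathscr{A}^p_k$; a short case check shows the forced positions are exactly $\beta \in \{2e_{k+1}, e_{k+1}\} \cup \{e_{k+1}+e_j : j \neq k+1\}$, with values $1$, $(Z^{(k)})_{e_{k+1},\mathbf{0}}$, and $(Z^{(k)})_{e_{k+1},e_j}$, respectively. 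The remaining entries (those at $\beta = \mathbf{0}$, $\beta = e_j$ for $j\neq k+1$, $\beta = e_i+e_j$ with $i,j \neq k+1$, and $\beta = 2e_j$ with $j \leq k$) are genuinely free. Invoking \cite[Theorem 7]{grone1984positive}, it suffices to verify that the specification graph $\mathcal{G}$ on $\mathscr{A}^p_{k+1}$ is chordal and that every fully specified principal submatrix is PSD. Chordality is immediate: the subgraph on $\mathscr{A}^p_k$ is a clique, so any cycle of length $\geq 4$ either lies inside this clique (trivially chorded) or passes through the single new vertex $2e_{k+1}$, whose two cycle-neighbors lie in the clique and hence share an edge that serves as a chord.

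For the PSD check, submatrices avoiding $2e_{k+1}$ are principal submatrices of $Z^{(k)} \succeq 0$; the nontrivial case is a fully specified submatrix on $\{2e_{k+1}\} \cup T'$ with $T' \subseteq T := \{e_{k+1}\} \cup \{e_{k+1}+e_j : j \neq k+1\}$. Define the index bijection $f:2e_{k+1} \mapsto e_{k+1}$, $e_{k+1} \mapsto \mathbf{0}$, $e_{k+1}+e_j \mapsto e_j$. A direct verification using $\mathsf{y}_{\alpha-\beta} = \mathsf{y}_{f(\alpha)-f(\beta)}$ (true because the bijection subtracts $e_{k+1}$ from each index, leaving differences invariant) shows the submatrix at $\{2e_{k+1}\} \cup T'$ equals, up to simultaneous row/column permutation, the principal submatrix of $\widetilde{X} := Z^{(k)}_{\widetilde{\mathscr{A}}^p,\widetilde{\mathscr{A}}^p}$ on $\{\mathbf{0}\} \cup \{e_j\}_{j \in [p]} \cap (\{e_{k+1}\} \cup f(T'))$, which sits inside $\widetilde{\mathscr{A}}^p$ and is therefore PSD. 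Hence the Grone--Johnson--Sá--Wolkowicz completion applies, yielding $Z^{(k+1)} \in \mathcal{F}(\mathscr{A}^p_{k+1})$.

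The main obstacle I foresee is the bookkeeping in the first two steps: (i) enumerating precisely which entries of the new row $2e_{k+1}$ are forced by the moment pattern and confirming that the rest are unconstrained by other positions in $\mathscr{A}^p_{k+1} - \mathscr{A}^p_{k+1}$, and (ii) writing out the bijection $f$ cleanly so that the correspondence between the candidate fully specified submatrix and a principal submatrix of $\widetilde{X}$ is entry-wise exact. Both tasks are routine but must be executed carefully because the free entries at $\beta = 2e_j$ with $j \leq k$ introduce unspecified edges among previously added vertices that do not violate chordality only due to the convenient structure of $T$.
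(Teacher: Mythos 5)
Your proof is correct and takes essentially the same approach as the paper: both build the chain $\widetilde{\mathscr{A}}^p \models \widetilde{\mathscr{A}}^p \cup \{2\mathbf{e}_1\} \models \cdots \models \mathscr{A}^p$ by adjoining one vector $2\mathbf{e}_{k+1}$ at a time, invoke the chordal PSD-completion theorem of Grone--Johnson--S\'a--Wolkowicz, and observe that the only nontrivial fully specified submatrix is (via the shift $\alpha \mapsto \alpha - \mathbf{e}_{k+1}$) a principal submatrix of the existing moment matrix supported inside $\widetilde{\mathscr{A}}^p$. Your version merely makes the shift bijection and the chordality check a bit more explicit than the paper does.
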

\begin{proof}
    See \Cref{section_replacedProofs}.
\end{proof}

 {\begin{remark}
    \Cref{lemma_degCompletionGeneral} allows us to choose a smaller monomial basis, namely a basis without squared variables, without weakening the corresponding CSDP relaxation. There are several results in the literature on choosing a (smaller) monomial basis, though they are limited to real variables. For unconstrained polynomial optimization, the Newton polytope (see e.g., \cite[Section III.A]{lofberg2009pre}) offers a monomial basis which is guaranteed to find sum of squares decompositions of sum of squares polynomials. Another, more broadly applicable, method is proposed in \cite[Algorithm 4.1]{chordalTSSOS}. In contrast to our work, it is not proven in~\cite{chordalTSSOS} that the basis returned by that algorithm preserves the relaxation strength, in comparison with the standard monomials basis.
\end{remark}}

\section{Extreme points of \texorpdfstring{$\mathcal{E}^3_m$}{E3m}} 
\label{section_liftingsOfE3m}

In this section we derive necessary and sufficient conditions for a matrix to be an extreme rank 2 point of $\mathcal{E}^3_m$,  $m > 2$ finite. For any such $m$, we provide an explicit rank 2 extreme point  of $\mathcal{E}^3_m$ (\Cref{lemma_generalRank2EPofE}).
Further, we extend this result for any finite $n$ and $m$, which proves the strict inclusion of $\cutPolytope$ in $\elliptope$ (\Cref{lemma_firstLiftingNeverTight}). 

For $m > 2$, we consider a general rank 2 matrix, parameterized as
\begin{align}
    \label{eqn_rank2Parametrization}
    N = \begin{bmatrix}
        1 & N_{12} & N_{13} \\
        \overline{N}_{12} & 1 & N_{23} \\
        \overline{N}_{13} & \overline{N}_{23} & 1
    \end{bmatrix} = G\conjTrans G \in \mathcal{E}^3_m, ~\text{  for  }~ G= \begin{bmatrix}
        \mathbf{e} & u & v
    \end{bmatrix} = \begin{bmatrix}
        1 & u_{1} & v_{1} \\
        0 & u_{2} & v_{2}
    \end{bmatrix},
\end{align}
where $\| u \| = \| v \| = 1$. We assume that at least one of $u_{2}$ and $v_{2}$ is nonzero (to ensure $\rankOp(N) = 2$).  Note that the above parametrization always exists, see e.g.,~\Cref{lemma_simplePara} and \cite{li1994note}.  We investigate under what conditions $N$ is an extreme point.

A  perturbation of $N \in \mathcal{E}^3_m$  {(with respect to $\elliptopeNoSS^3_m$)} is a  {nonzero Hermitian} matrix $B = G\conjTrans R G$, satisfying $\text{diag}(B) = \mathbf{0}_3$, $R\in {\mathcal H}^2$,  {and for which there exists a $t > 0 $ such that $N \pm tB \in \elliptopeNoSS^3_m$,} see~\cite{li1994note} and also \Cref{section_secondLiftingCutInf}.
The constraint $\text{diag}(B) = \mathbf{0}_3$ implies $\mathbf{e}\conjTrans R \mathbf{e} = R_{11} = 0$, and $u\conjTrans R u = v\conjTrans R v = 0$. The latter system may be written in the following form:
\begin{align}
    \label{eqn_RmatrixEq}
    R=\begin{bmatrix}
        0 & \overline{\alpha} \\
        \alpha & c
    \end{bmatrix}, \qquad
    \begin{bmatrix}
        u_{1} \overline{u}_{2} & \overline{u}_{1} u_{2} & | u_{2} |^2 \\
        v_{1} \overline{v}_{2} & \overline{v}_{1} v_{2} & | v_{2} |^2
    \end{bmatrix} \begin{bmatrix}
        \alpha \\ \overline{\alpha} \\ c
    \end{bmatrix} = 0 \,\, \text{ for } \,\, c \in \mathbb{R}. 
\end{align}
Note the similarity with \eqref{eqn_matrixF}. Any possible perturbation $B$  {of $N$} is of the following form
\begin{align}
    \label{eqn_BmatrixDef}
    B = \begin{bmatrix}
        0 & b_{12} & b_{13} \\
        \overline{b}_{12} & 0 & b_{23} \\
        \overline{b}_{13} & \overline{b}_{23} & 0
    \end{bmatrix} = G\conjTrans
        R
     G = G\conjTrans \begin{bmatrix}
        0 & \overline{\alpha}  \\
        \alpha & c
    \end{bmatrix} G.
\end{align} 
 Recall that $N$ is not an extreme point of $\mathcal{E}^3_m$  {if and only if it admits a perturbation. There exist however simple sufficient conditions that show that a matrix $N$ is not an extreme point, which we provide below.}

\begin{lemma} \label{lemmaRank2N}
    Let  $m > 2$,  and $N \in \mathcal{E}^3_m$ such that $\rankOp(N) = 2$. If all the off-diagonal elements of $N$ are interior points of $\ConvHull{(\mathcal{B}_m)}$, or  {any} off-diagonal element of $N$ is contained in $\mathcal{B}_m$, then $N$ is not an extreme point of $\mathcal{E}^3_m$.
\end{lemma}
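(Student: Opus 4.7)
The plan is to construct a nonzero Hermitian perturbation $B$ of $N$, i.e., with $\mathrm{diag}(B) = \mathbf{0}_3$, such that $N \pm tB \in \mathcal{E}^3_m$ for all sufficiently small $t > 0$; this witnesses that $N$ is not extreme. Following the parametrization $N = G\conjTrans G$ and the perturbation calculus set up before the lemma, I would write $B = G\conjTrans R G$ and reduce the zero-diagonal requirement to finding a nontrivial pair $(\alpha, c)$ solving the homogeneous system \eqref{eqn_RmatrixEq}. Viewed over the reals, this is at most two linear equations in the three unknowns $(\mathrm{Re}(\alpha), \mathrm{Im}(\alpha), c)$, so a nonzero real solution exists; since $G$ has rank $2$, the resulting nonzero $R$ produces a nonzero $B$. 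Positive semidefiniteness of $N \pm tB = G\conjTrans(I \pm tR)G$ is automatic for $t$ small enough that $I \pm tR \succeq 0$, so only the complex-convex-hull constraints $(N \pm tB)_{ij} \in \ConvHull(\mathcal{B}_m)$ remain to be verified.

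For the first hypothesis (all off-diagonals interior), any nonzero $R$ as above suffices: each $N_{ij}$ sits in an open neighborhood inside $\ConvHull(\mathcal{B}_m)$, and small enough $t$ keeps $N_{ij} \pm tB_{ij}$ inside by continuity. For the second hypothesis, I would assume without loss of generality that $N_{12} \in \mathcal{B}_m$; the other two choices are symmetric. Since $N_{12} = \mathbf{e}\conjTrans u = u_{1}$ and $\| u \| = 1$, unit modulus forces $u_{2} = 0$. Two simplifications follow: the $u$-equation in \eqref{eqn_RmatrixEq} collapses to the trivial identity $0 = 0$, and $B_{12} = \bar{\alpha} u_{2} = 0$ automatically, so the perturbation fixes the vertex entry $N_{12}$. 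Choosing $\alpha \neq 0$ arbitrarily and solving the remaining $v$-equation for $c$ (nondegenerate because $\rankOp(N) = 2$ together with $u_{2} = 0$ forces $v_{2} \neq 0$) produces a nonzero perturbation that moves only $N_{13}$ and $N_{23}$; by the hypothesis of the lemma, neither of these lies in $\mathcal{B}_m$.

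The principal technical obstacle is to verify the convex-hull constraint at $N_{13}$ and $N_{23}$ in the second case when one of them lies on an edge of $\ConvHull(\mathcal{B}_m)$ rather than strictly inside it. Since $B_{13} = \bar{\alpha} v_{2}$ and $B_{23} = \bar{u}_{1} B_{13}$ are linked by the fixed rotation $\bar{u}_{1}$, they cannot be steered independently; however, I retain one real degree of freedom in $\arg(\alpha)$ after fixing $c$ from the $v$-equation. The plan is to use this freedom to align $B_{13}$ with the edge direction at $N_{13}$, and then exploit the norm constraints $\| u \| = \| v \| = 1$ together with the regular geometry of the $m$-gon $\ConvHull(\mathcal{B}_m)$ to check that the induced direction of $B_{23}$ is compatible with the position of $N_{23}$, so that $N \pm tB$ remains in $\mathcal{E}^3_m$ for small $t$.
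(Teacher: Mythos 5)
Your construction matches the paper's for the first hypothesis and for the interior sub-case of the second. In the second hypothesis the paper also reduces to $u_2 = 0$, $B_{12} = 0$, but then bifurcates: when $N_{13}$ and $N_{23}$ are interior, a small perturbation works by continuity (your sub-case); when they lie on an edge of $\ConvHull(\mathcal{B}_m)$, however, the paper drops perturbation theory entirely. It writes $N_{13} = u_1 = \lambda\delta + (1-\lambda)\eta$ for distinct $\delta,\eta \in \mathcal{B}_m$ and $\lambda \in (0,1)$, and checks directly that $N = \lambda z z\conjTrans + (1-\lambda) w w\conjTrans$ with $z = (1,\overline{\kappa},\overline{\delta})^\top$, $w = (1,\overline{\kappa},\overline{\eta})^\top$, where $\kappa = N_{12}\in\mathcal{B}_m$. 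Since $z,w\in\mathcal{B}^3_m$ are distinct rank-one generators, this exhibits $N$ as a nontrivial convex combination and is both shorter and more informative (it shows $N \in \cutPolytopeNoSS^3_m$, not merely non-extremity).

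Your alignment argument in the edge sub-case can be made rigorous, but the step you defer is the crux, and your stated plan to lean on the norm constraints $\|u\|=\|v\|=1$ points at the wrong mechanism. What actually makes it go through is that $N_{23} = \overline{u}_1 N_{13}$ and $B_{23} = \overline{u}_1 B_{13}$ with $u_1 \in \mathcal{B}_m$: multiplication by $\overline{u}_1$ is a rotation mapping $\ConvHull(\mathcal{B}_m)$ onto itself, carrying the edge through $N_{13}$ (normal $\nu_k$) to the edge through $N_{23}$ (normal $\mu_k := \overline{u}_1\nu_k$). Hence $\mathrm{Re}(\overline{\nu}_k B_{13}) = 0$ forces $\mathrm{Re}(\overline{\mu}_k B_{23}) = \mathrm{Re}(u_1\overline{\nu}_k\cdot\overline{u}_1 B_{13}) = \mathrm{Re}(\overline{\nu}_k B_{13}) = 0$, so aligning $B_{13}$ automatically aligns $B_{23}$. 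Two cleanup points for the same reason: rotation preserves interior and boundary, so $N_{13}$ and $N_{23}$ are always either both interior or both on edges (your phrasing ``one of them lies on an edge'' misses this symmetry); and $N_{13}$ cannot be at a vertex (only $N_{12}$ is), which is needed so a tangential move for small $t$ stays on the closed edge.
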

\begin{proof}
    By \Cref{lemma_extremePointsRootN}, $N$ is not an extreme point of $\mathcal{E}^3_\infty$. Thus, $N$ admits some perturbation matrix $B$  {with respect to $\elliptopeNoSS^n_\infty$, i.e., there exists some $t^* > 0$ such that $N \pm t B \in \elliptopeNoSS^3_\infty$ for all $t \in [0, t^*]$.}     
    Assuming all off-diagonal elements of $N$ are interior points of $\ConvHull{(\mathcal{B}_m)}$, there exists some $t \in [0, t^*]$ small enough such that $N \pm t B \in \elliptopeNoSS^3_m$, and the result follows.
    
    Let us now assume that $N$ has exactly one  {upper-triangular} off-diagonal element contained in $\mathcal{B}_m$. Then, without loss of generality, we have
    \begin{align}
        \label{eqn_rank2DecompG}
        N = G\conjTrans G, ~\text{ for }~ G  = \begin{bmatrix}
            1 & \kappa & u_{1} \\
            0 & 0 & u_{2}
        \end{bmatrix},
    \end{align}
    where $\kappa$ is one of the $m$ roots of unity, and $u_{2} \neq 0$. The off-diagonal elements of $N$ are given by $\kappa, \, u_1$, and $\overline{\kappa} u_1$ and their complex conjugates.  {Therefore, $N$  cannot have more than one upper-triangular off-diagonal element in $\mathcal{B}_m$.} We  distinguish two cases:
    \begin{enumerate}
        \item The complex number $u_{1}$ is an interior point of $\ConvHull{(\mathcal{B}_m)}$. Again, there exists a perturbation matrix $B$ and $t^* > 0 $ such that $N \pm t B \in \elliptopeNoSS^3_\infty$ for all $t \in [0,t^*]$. Note that, since $N_{12} = \kappa \in \mathcal{B}_\infty$, $B_{12} = 0$. Note that the other off-diagonal elements of $N$ are all interior points of $\ConvHull(\mathcal{B}_m)$. Thus, there exists some small enough $t \in [0, t^*]$ such that $N \pm t B \in \elliptopeNoSS^3_3$, and hence, $N$ is not an extreme point of $\elliptopeNoSS^3_m$.
        
        \item The complex number $u_{1} \in \partial \ConvHull{(\mathcal{B}_m)} \setminus \mathcal{B}_m$. Then $u_{1}$ can be written as $u_{1} = \lambda \delta + (1-\lambda) \eta$, where $\lambda \in (0,1)$ and $\delta, \, \eta$ are distinct $m$-roots of unity.
        \begin{align}
            N &= \begin{bmatrix}
                1 & \kappa & \lambda \delta + (1-\lambda) \eta \\
                \overline{\kappa} &1 & \lambda \overline{\kappa}\delta + (1-\lambda) \overline{\kappa}\eta \\
                \lambda \overline{\delta} + (1-\lambda) \overline{\eta} & \lambda \kappa \overline{\delta} + (1-\lambda) \kappa \overline{\eta} & 1 
            \end{bmatrix} 
            =\lambda \begin{bmatrix}
                1 \\ \overline{\kappa} \\ \overline{\delta}
            \end{bmatrix} \begin{bmatrix}
                1 \\ \overline{\kappa} \\ \overline{\delta}
            \end{bmatrix}\conjTrans + (1-\lambda) \begin{bmatrix}
                1 \\ \overline{\kappa} \\ \overline{\eta}
            \end{bmatrix}
            \begin{bmatrix}
                1 \\ \overline{\kappa} \\ \overline{\eta}
            \end{bmatrix}\conjTrans,
        \end{align}
        so that clearly, $N$ is not an extreme point of $\mathcal{E}^3_m$.
    \end{enumerate}
\end{proof}

Let us denote the  boundary of $\ConvHull{(\mathcal{B}_m)}$ by $\partial \ConvHull{(\mathcal{B}_m)}$.  
Then,  the set that contains the elements from  $\partial \ConvHull{(\mathcal{B}_m)}$ without the elements in $\mathcal{B}_m$ is denoted by
\begin{align}
    \label{eqn_partialBoundarySet}
    \partial \ConvHull{(\mathcal{B}_m)} \setminus \mathcal{B}_m.
\end{align}

 {It follows from \Cref{lemmaRank2N} that any rank 2 extreme point of $\mathcal{E}^3_m$ must have at least one element which is contained in the set \eqref{eqn_partialBoundarySet}, and its off-diagonal elements cannot be contained in $\mathcal{B}_m$. This allows us to characterize rank 2 extreme points of $\elliptopeNoSS^3_m$. We first require the following preparatory lemma.

\begin{lemma}
    \label{lemma_prepatoryLemmaEP2}
    Let $m > 2$ and $N \in \elliptopeNoSS^3_m$ be a rank 2 matrix. Let $K := \{ \{i,j\} \in [3] \times [3] \,\, | \,\,  N_{ij} \in \partial \ConvHull{(\mathcal{B}_m)} \setminus \mathcal{B}_m \}$ and  $f : K \to [m]$ be the function that satisfies $\mathrm{Re}(\overline{\nu}_{f(ij)}N_{ij}) = \cos{(\pi /m )}$, for $\nu$ as in \eqref{eqn_convBineqElement}. If $K \neq \emptyset$, then any {possible} perturbation  $B$ of $N$ must satisfy $\mathrm{Re}(\overline{\nu}_{f(ij)}b_{ij}) = 0$ for all $\{i,j\} \in K$.
\end{lemma}
\begin{proof}
    Suppose $B$ is a perturbation of $N$ (with respect to $\mathcal{E}^3_m$) and $\{i,j\} \in K$. Then by definition of a perturbation, we must have $N \pm tB \in \mathcal{E}^3_m$ for some $t > 0$. In particular, $(N \pm tB)_{ij} \in \ConvHull\left( \mathcal{B}_m \right)$. Considering \eqref{eqn_convBineqElement}, this implies that
    \begin{align}
        \mathrm{Re}\left( \overline{\nu}_{f(ij)}(N \pm tB)_{ij}  \right) \leq \cos{\left( \frac{\pi}{m} \right)} \Rightarrow \cos{\left( \frac{\pi}{m} \right)} \pm t \, \mathrm{Re}\left( \overline{\nu}_{f(ij)} b_{ij} \right) \leq \cos{\left( \frac{\pi}{m}\right)} \Rightarrow \mathrm{Re}\left( \overline{w}_{f(ij)} b_{ij} \right) = 0.
    \end{align}
\end{proof}
}

We now present the characterization of rank 2 extreme points of $\elliptopeNoSS^3_m$.

\begin{proposition}
    \label{lemma_elliptopeExtPointsR2}
   Let  $m > 2$, and $N \in {\mathcal E}^3_m$ be a rank 2 matrix. Further, let $K := \{ \{i,j\} \in [3] \times [3] \,\, | \,\,  N_{ij} \in \partial \ConvHull{(\mathcal{B}_m)} \setminus \mathcal{B}_m \}$ and  $f : K \to [m]$ be the function that satisfies $\mathrm{Re}(\overline{\nu}_{f(ij)}N_{ij}) = \cos{(\pi /m )}$, for $\nu$ as in~\eqref{eqn_convBineqElement}. Matrix $N$ is an extreme point of $\mathcal{E}^3_m$, if and only if the following hold:
\begin{enumerate}
    \item \label{thm_item1} $K \neq \emptyset$;  
    \item \label{thm_item3}  {There does not exist a perturbation $B$ of $N$ satisfying  $\mathrm{Re}(\overline{\nu}_{f(ij)} b_{ij}) = 0$ for all $\{i,j\} \in K$.}
\end{enumerate}
\end{proposition}
\begin{proof}
    ($\Rightarrow$) Let $N$ be a rank 2 extreme point of $\mathcal{E}^3_m$. By \Cref{lemmaRank2N}, not all off-diagonal elements of $N$ can be in the interior of  $\ConvHull(\mathcal{B}_m)$,  {and none of the off-diagonal elements can be contained in $\mathcal{B}_m$.} Thus $K \neq \emptyset$, satisfying \Cref{thm_item1}. Since $N$ is an extreme point, it does not admit a perturbation.     In particular, it does not admit a perturbation that satisfies $\mathrm{Re}(\overline{\nu}_{f(ij)} b_{ij}) = 0$ $\forall \{i,j\} \in K$, so \Cref{thm_item3} is satisfied.

     {($\Leftarrow$) Let $N \in \elliptopeNoSS^3_m$ be a rank 2 matrix and $K\neq 0$. \Cref{lemma_prepatoryLemmaEP2} states that any possible perturbation of a rank 2 matrix must satisfy $\mathrm{Re}(\overline{\nu}_{f(ij)} b_{ij}) = 0$ $\forall \{i,j\} \in K$ when $K\neq \emptyset$. Because $N$ satisfies \Cref{thm_item3}, such a perturbation cannot exist.  Thus, $N$ admits no perturbation, and hence, is an extreme point.}  
    \end{proof} 
Using \Cref{lemma_elliptopeExtPointsR2}, we determine a rank 2 extreme point of $\mathcal{E}^3_m$, for any $m > 2$.
\begin{restatable}{lemma}{generalRankTwoEPofE}
\label{lemma_generalRank2EPofE}
    Fix some integer $m > 2$, and set 
    $$
    G = \begin{bmatrix}
    1 & \frac{1}{2} + \frac{1}{2} \exp(2 \pi \imagUnit / m) & {\sin{\left( \pi/m \right)}} \\
    0 & {\sin{\left( \pi/m \right)}} & \frac{1}{2} + \frac{1}{2} \exp(2 \pi \imagUnit / m)
\end{bmatrix}.$$
Then $N = G\conjTrans G$ is a rank 2 extreme point of $\mathcal{E}^3_m$.
\end{restatable}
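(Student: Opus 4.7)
My plan is to apply \Cref{lemma_elliptopeExtPointsR2} after recognizing that the given $G$ already puts $N = G\conjTrans G$ into the parametrization \eqref{eqn_rank2Parametrization}. Abbreviating $k := \cos(\pi/m)$ and $s := \sin(\pi/m)$, I would first use the identities $\tfrac{1}{2}(1+e^{2\pi \imagUnit/m}) = k\,e^{\pi \imagUnit/m}$ and $\tfrac{1-\cos(2\pi/m)}{2} = s^2$ to rewrite the columns of $G$ as $\mathbf{e} = (1,0)\conjTrans$, $u = (k\,e^{\pi \imagUnit/m}, s)\conjTrans$, and $v = (s, k\,e^{\pi \imagUnit/m})\conjTrans$. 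Since $k^2 + s^2 = 1$, each column has unit norm; since $s>0$ for $m>2$, the vectors $\mathbf{e}$ and $u$ are linearly independent, so $\rankOp(N) = 2$.

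I would next compute the off-diagonal entries of $N$:
\begin{align*}
N_{12} = u_1 = k\,e^{\pi\imagUnit/m},\qquad N_{13} = v_1 = s,\qquad N_{23} = \overline{u_1}v_1 + \overline{u_2}v_2 = 2sk^2.
\end{align*}
All three have modulus strictly less than $1$, so no off-diagonal entry is a root of unity; this verifies item~(\ref{thm_item2}) of \Cref{lemma_elliptopeExtPointsR2} and confirms that all off-diagonals lie in $\ConvHull(\mathcal{B}_m)$. The key geometric observation is that $N_{12} = k\,\nu_1$ with $\nu_1 := e^{\pi\imagUnit/m}$, so $N_{12}$ is the midpoint of the edge of $\ConvHull(\mathcal{B}_m)$ joining $1$ and $e^{2\pi \imagUnit/m}$; in particular $\mathrm{Re}(\overline{\nu}_1 N_{12}) = k = \cos(\pi/m)$, so $\{1,2\}\in K$ with $f(\{1,2\}) = 1$. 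For $m \geq 3$, $N_{13}$ and $N_{23}$ are strictly inside $(0,1)$ on the real line, hence interior points of the polygon, giving $K = \{\{1,2\}\}$ and item~(\ref{thm_item1}).

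The main obstacle is checking item~(\ref{thm_item3}): that the essentially unique perturbation $B = G\conjTrans R G$ satisfies $\mathrm{Re}(\overline{\nu}_1 b_{12}) \neq 0$. Writing $R = \begin{bmatrix} 0 & \overline{\alpha} \\ \alpha & c \end{bmatrix}$ with $\alpha = p + q\imagUnit$ and using $u_1\overline{u}_2 = sk\,e^{\pi \imagUnit/m}$, $v_1\overline{v}_2 = sk\,e^{-\pi \imagUnit/m}$, the system \eqref{eqn_RmatrixEq} reduces to the two real equations
\begin{align*}
2k(kp - sq) + sc = 0,\qquad 2s(kp + sq) + kc = 0.
\end{align*}
Eliminating $c$ I would obtain $q = pk\cos(2\pi/m)/s$ and $c = -4pk^2 s$, exhibiting a one-parameter family of nonzero perturbation directions indexed by $p\in\mathbb{R}$. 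Then $b_{12} = \mathbf{e}\conjTrans R u = \overline{\alpha}\,u_2 = (p - q\imagUnit)s$, and a short computation yields
\begin{align*}
\mathrm{Re}(\overline{\nu}_1 b_{12}) = s(kp - qs) = pks(1-\cos(2\pi/m)) = 2pks^3,
\end{align*}
which is nonzero for any $p\neq 0$ because $k,s>0$ when $m\geq 3$. All three conditions of \Cref{lemma_elliptopeExtPointsR2} are therefore satisfied, and $N$ is a rank~$2$ extreme point of $\mathcal{E}^3_m$.
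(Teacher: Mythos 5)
Your proof is correct and takes essentially the same approach as the paper: both verify the three conditions of \Cref{lemma_elliptopeExtPointsR2}, with the only cosmetic differences being that you parametrize the perturbation by the free real variable $p$ (equivalently, $\alpha$) whereas the paper normalizes $c=1$, and that you additionally note $N_{13},N_{23}$ are interior to $\ConvHull(\mathcal B_m)$ so $K=\{\{1,2\}\}$. Your abbreviations $k=\cos(\pi/m)$, $s=\sin(\pi/m)$ make the algebra cleaner than the paper's direct $2\times2$ matrix inversion, and your final expression $\mathrm{Re}(\overline{\nu}_1 b_{12}) = 2pks^3$ agrees with the paper's $-s^2/(2k)$ after substituting $p=-1/(4sk^2)$ from the normalization $c=1$.
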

\begin{proof}
    See \Cref{section_replacedProofs}.
\end{proof}
Now we can directly show the following.
\begin{corollary}
    \label{lemma_firstLiftingNeverTight}
    For finite $m$ and $n$, $m \geq 2$ and $n \geq 3$, we have $\cutPolytope \subsetneq \mathcal{E}^n_m$.  
\end{corollary}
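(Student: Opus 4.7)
The plan is to reduce to the case $n = 3$ and then lift the separation to any $n \geq 4$. For $n = 3$ and $m \geq 3$, \Cref{lemma_generalRank2EPofE} provides a rank $2$ extreme point $N$ of $\mathcal{E}^3_m$. Any member of $\cutPolytopeNoSS^3_m$ is a convex combination of the rank $1$ matrices $xx\conjTrans$ with $x \in \mathcal{B}^3_m$, and these all lie in $\mathcal{E}^3_m$; since $N$ is extreme in the larger set $\mathcal{E}^3_m$, membership of $N$ in $\cutPolytopeNoSS^3_m$ would force $N = xx\conjTrans$ for some $x \in \mathcal{B}^3_m$, contradicting $\rankOp(N) = 2$. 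Thus $N \in \mathcal{E}^3_m \setminus \cutPolytopeNoSS^3_m$. For the remaining base case $n = 3, m = 2$, which is not covered by \Cref{lemma_generalRank2EPofE}, the matrix $N$ with $N_{ii} = 1$ and $N_{ij} = -1/2$ for $i \neq j$ is positive semidefinite (eigenvalues $0, 3/2, 3/2$), hence lies in $\mathcal{E}^3_2$, while it violates the triangle inequality $X_{12} + X_{13} + X_{23} \geq -1$, which is valid on $\cutPolytopeNoSS^3_2$ by \eqref{eqn_realTriangleIneq}.

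To lift to $n \geq 4$, I would factor $N = G\conjTrans G$ with $G \in \mathbb{C}^{2 \times 3}$ and form $\widetilde{G} \in \mathbb{C}^{2 \times n}$ by appending $n - 3$ copies of the first column of $G$. Then $\widetilde{N} := \widetilde{G}\conjTrans \widetilde{G}$ is Hermitian PSD, has $\widetilde{N}_{\{1,2,3\}} = N$, diagonal $\mathbf{1}_n$, and for $j \geq 4$ satisfies $\widetilde{N}_{1j} = 1$, $\widetilde{N}_{ij} = N_{i1}$ for $i \in \{2,3\}$, and $\widetilde{N}_{ij} = 1$ for $i \geq 4$. All off-diagonal entries of $\widetilde{N}$ are therefore either off-diagonal entries of $N$ (hence in $\ConvHull(\mathcal{B}_m)$) or equal to $1$, so $\widetilde{N} \in \mathcal{E}^n_m$.

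The closing step is a pinning argument that rules out $\widetilde{N} \in \cutPolytope$. If one had $\widetilde{N} = \sum_k \lambda_k x^{(k)}(x^{(k)})\conjTrans$ with $\lambda_k > 0$, $\sum_k \lambda_k = 1$, and $x^{(k)} \in \mathcal{B}^n_m$, then $\widetilde{N}_{1j} = 1$ for $j \geq 4$ would yield $\sum_k \lambda_k \overline{x^{(k)}_1} x^{(k)}_j = 1$; since each summand lies on the unit circle, strict convexity of the closed unit disk forces $x^{(k)}_j = x^{(k)}_1$ for every $k$ and every $j \geq 4$. Restricting the representation to rows and columns $\{1,2,3\}$ then expresses $N$ as a convex combination of matrices $y^{(k)}(y^{(k)})\conjTrans$ with $y^{(k)} \in \mathcal{B}^3_m$, contradicting $N \notin \cutPolytopeNoSS^3_m$. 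Once \Cref{lemma_generalRank2EPofE} is in hand, the only substantive step is this pinning observation, which is immediate from the boundary behaviour of convex combinations of points on the unit circle; this is where I would expect the mildest difficulty, as one must keep the indexing of the decomposition consistent between $\widetilde{N}$ and $N$.
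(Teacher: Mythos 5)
Your proof is correct, and the base case $n = 3$ is essentially the argument the paper relies on (an extreme point of $\mathcal{E}^3_m$ of rank $2$ cannot lie in $\cutPolytopeNoSS^3_m$, since otherwise it would have to coincide with a rank~$1$ generator; the $m=2$ case is handled by the same triangle-inequality example as the paper). Where you diverge is the extension to $n \geq 4$: the paper extends $N$ to $\widetilde{N} = \begin{bmatrix} N & \mathbf{0} \\ \mathbf{0} & I_{n-3} \end{bmatrix}$, relying on $0 \in \ConvHull(\mathcal{B}_m)$, whereas you instead append $n - 3$ copies of the first column of $G$, producing a $\widetilde{N}$ with $1$'s in the new entries. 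Both extensions are valid. However, your final ``pinning'' step is more work than necessary. For \emph{any} $\widetilde{N} \in \mathcal{E}^n_m$ whose leading $3 \times 3$ principal submatrix equals $N$, membership $\widetilde{N} \in \cutPolytope$ immediately implies $N \in \cutPolytopeNoSS^3_m$, because every principal submatrix of an element of $\cutPolytope$ lies in the corresponding lower-order complex cut polytope: if $\widetilde{N} = \sum_k \lambda_k x^{(k)}(x^{(k)})\conjTrans$ with $x^{(k)} \in \mathcal{B}^n_m$, then restricting to rows and columns $\{1,2,3\}$ gives $N = \sum_k \lambda_k y^{(k)}(y^{(k)})\conjTrans$ with $y^{(k)} = (x^{(k)}_1, x^{(k)}_2, x^{(k)}_3)^\top \in \mathcal{B}^3_m$, no matter what the remaining coordinates of $x^{(k)}$ do. The strict-convexity observation that forces $x^{(k)}_j = x^{(k)}_1$ for $j \geq 4$ is true but superfluous, and once one notices the principal-submatrix closure, the block-diagonal extension of the paper works just as cleanly with less bookkeeping.
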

\begin{proof}
 For the case $m = 2$ and $n = 3$, we take $N = \frac{3}{2}I_3 - \frac{1}{2} J_{3} \succeq 0 \Rightarrow N \in \mathcal{E}^3_2$. Since this $N$ does not satisfy the triangle inequality $N_{12}+ N_{13} + N_{23} \geq -1$, see \eqref{eqn_realTriangleIneq}, $N \notin \cutPolytopeNoSS^3_2$. 
 
\Cref{lemma_generalRank2EPofE} proves that $\cutPolytopeNoSS^n_m \subsetneq \mathcal{E}^n_m$ for all finite $m > 2$ and $n = 3$. The case  $m > 2$ and $n > 3$ follows by considering
\begin{align}
    \label{eqn_extensionN}
    \widetilde{N} = \begin{bmatrix}
        N & \mathbf{0}_{3 \times (n-3)} \\
        \mathbf{0}_{ (n-3) \times 3} & I_{n-3}
    \end{bmatrix} \in \mathcal{E}^n_m \text{, but not in } \cutPolytope,
\end{align}
for $N$ as in \Cref{lemma_generalRank2EPofE}. The same extension as \eqref{eqn_extensionN} for $N = \frac{3}{2}I_3 - \frac{1}{2} J_{3}$ shows that $\cutPolytopeNoSS^n_2 \subsetneq \mathcal{E}^n_2$ for $n > 3$.
\end{proof}

\section{Numerical results}
\label{section_numericalResults}

In this section, we provide some computational results  related to the previous sections.
All CSDPs are first reformulated to equivalent real SDPs and then solved using MOSEK \cite{aps2023mosek} with default settings  {on a server with Intel Xeon Gold 6126 CPU, running at 2.60GHz, with 512 GB RAM and using 8 cores.}

\subsection{Strength of cuts}
\label{section:StrengthOfCursNumeric}
We provide the numerical values of $\strFunc$ for the valid inequalities  stated in  \Cref{lemma_completeGraphFourCut,lemma_strengthOfFacet}, and \Cref{lemma_validCutForL}.
To provide a fair comparison, we have ensured that each matrix $Q$ satisfies $\langle I, Q \rangle = 0$, see also \Cref{remark_fairScaling}. 

Results are provided in \Cref{table_strTable}. Strength values that have not been analytically computed in the previous sections, have now been computed using MOSEK \cite{aps2023mosek}. The strength of the cuts in \Cref{lemma_completeGraphFourCut} tend to $1$ as $m \to \infty$. For $m = 3$, the strongest cut is given by the facet-defining inequalities from \Cref{lemma_strengthOfFacet}. 
\begin{table}[ht]
\centering
\begin{tabular}{l|lllllllll}
\hline
\multirow{2}{*}{Cut   as in:} & \multicolumn{9}{c}{$m$} \\ \cline{2-10} 
 & 2 & 3 & 4 & 5 & 6 & 7 & 8 & 9 & $\infty$ \\ \hline
\Cref{lemma_completeGraphFourCut}, $n=3$ & 1.500 & 1 & 1.500 & 1.146 & 1 & 1.114 & 1.061 & 1 & 1 \\
\Cref{lemma_completeGraphFourCut}, $n=4$ & 1 & 1.333 & 1 & 1.038 & 1 & 1.010 & 1 & 1.004 & 1 \\
\Cref{lemma_strengthOfFacet}, $n=3$ & 1 & 1.815 & 1.169 & 1.077 & 1.075 & 1.011 & 1 & 1 & 1 \\
\Cref{lemma_validCutForL}, $n=4$ & 1 & 1.155 & 1.155 & 1.155 & 1.155 & 1.155 & 1.155 & 1.155 & 1.155 \\ \hline
\end{tabular}
\caption{Numerical values of the strength of various cuts.}
\label{table_strTable}
\end{table}

\subsection{Random objective functions}
\label{Random optimization problems}
We consider the following optimization problem
\begin{align}
\label{RandomQ_example}
\max_{X \in K_m} \langle Q, X \rangle,
\end{align}
for $K_m = \elliptopeNoSS^{n}_m$ or $K_m = \triangleOp(\elliptopeNoSS^{n}_m)$, and $m \in \{ 3,4\}$. 
Here $Q\in  \mathcal{H}^n$, ${\rm Diag}(Q)=\mathbf{0}$, and $\mathrm{Im}(Q)\neq \mathbf{0}$.
The complex elliptope  $\elliptopeNoSS^{n}_m$  is defined in
\eqref{eqn_ElliptopeSDP}, and  $\triangleOp(\elliptopeNoSS^n_3)$ in \eqref{eqn_triangleSetE}.
The set $\triangleOp(\elliptopeNoSS^{n}_4)$  is defined as the set of matrices in $\elliptopeNoSS^{n}_4$ for which each $3 \times 3$ submatrix satisfies \eqref{eqn_ROC_triangleIneq}, the  {16} (ROC equivalent) facet defining inequalities from \Cref{lemma_completeGraphFourCut}, see \Cref{remark_facetCut4}.

We set $n = 100$, and generate 250 matrices $Q$ per value of $m$ in the following way. 
Upper triangular entries of a matrix $Q$ are of the form $a + b \imagUnit$, where $a$ and $b$ are independent random integer variables, drawn uniformly from the set $\{-10, -9,\ldots, 9,10 \}$.
For each such $Q$, we solve \eqref{RandomQ_example} for $K_m = \elliptopeNoSS^{100}_m$ and $K_m = \triangleOp(\elliptopeNoSS^{100}_m)$.
We perform a simple rounding procedure (see e.g., \cite{zhang2006complex}) on the optimal value of the corresponding optimization problem to obtain a lower bound on \eqref{RandomQ_example}, denoted $\texttt{LB}$. The resulting upper and lower bounds for fixed $m$ are averaged over the 250 runs and presented in \Cref{table_randomObjData}.  {The columns `Avg. time (s)' report the average computation time per relaxation in seconds.}
We observe that optimization over $\triangleOp(\elliptopeNoSS^{100}_m)$ provides significantly stronger bounds than optimization over $\elliptopeNoSS^{100}_m$, for both values of $m$. Thus, $\triangleOp(\elliptopeNoSS^{n}_m)$ approximates $\cutPolytopeNoSS^{n}_m$ better than the complex elliptope  $\elliptopeNoSS^{n}_m$.  {To compute bounds over $\triangleOp(\elliptopeNoSS^{100}_m)$, $m\in\{3,4\}$ we add all triangle facets to $\elliptopeNoSS^{100}_m$ at once.}

 {\Cref{table_randomObjData} also reports the total number of (in)equality constraints in the corresponding CSDP in the columns `\# (in)equality cons.'. These numbers can be computed as follows. For $\mathcal{E}^n_m$, we have $n$ equality constraints for the unit diagonal, and $m$ inequalities for each of the $n(n-1)/2$ upper triangular entries, to ensure $X_{ij} \in \ConvHull(\mathcal{B}_m)$. For $\mathbf{T}(\mathcal{E}^n_m)$ we have again $n$ unit diagonal constraints, and $m$ constraints for each of the $n(n-1)/2$ upper triangular entries. Moreover, for each of the $\binom{n}{3}$ principal $3 \times 3$ submatrices, we require an additional number of facets. In case $m = 3$, we need 18 additional facets that define $\cutPolytopeNoSS^3_3$, according to \Cref{ThmCut33} (note that the other 9 facets are already included to ensure $X_{ij} \in \ConvHull(\mathcal{B}_3)$). In case $m = 4$, we add the 16 ROC equivalent inequalities given by \Cref{lemma_completeGraphFourCut}, see \Cref{remark_facetCut4}, for each of the $\binom{n}{3}$ principal $3 \times 3$ submatrices. }

\begin{table}[ht]
\centering
\begin{tabular}{cr|c|c|r|cc|cc}
\hline
\multirow{2}{*}{} &  & \multicolumn{1}{c|}{\multirow{2}{*}{$\elliptopeNoSS^{100}_m$~~}} & \multicolumn{1}{c|}{\multirow{2}{*}{$\triangleOp(\elliptopeNoSS^{100}_m)$~~}} & \multicolumn{1}{c|}{\multirow{2}{*}{\texttt{LB}~~}} & \multicolumn{2}{c|}{ {Avg. time (s)}} & \multicolumn{2}{c}{ {\# (in)equality cons.}} \\
 &  & \multicolumn{1}{c|}{} & \multicolumn{1}{c|}{} & \multicolumn{1}{c|}{} &  {$\scriptstyle \elliptopeNoSS^{100}_m$} &  {$\scriptstyle \triangleOp(\elliptopeNoSS^{100}_m)$} &  {$\scriptstyle \elliptopeNoSS^{100}_m$} &  {$\scriptstyle \triangleOp(\elliptopeNoSS^{100}_m)$} \\ \hline
\multicolumn{1}{c}{\multirow{2}{*}{$m$}} & 3 & 14337.7 & 13290.2 & 9939.7 & 35.4 & 173.0 & 14950 & 2925550 \\
\multicolumn{1}{c}{} & 4 & 14849.3 & 14018.0 & 11509.3 & 40.5 & 169.4 & 19900 & 2607100 \\ \hline
\end{tabular}
\caption{Bounds, computation times and number of (in)equalities for \eqref{RandomQ_example} where  $K_m = \elliptopeNoSS^{n}_m$ or $K_m = \triangleOp(\elliptopeNoSS^{n}_m)$, and $m \in \{ 3,4\}$.
Results are averaged over 250 runs.}
\label{table_randomObjData}
\end{table}

\subsection{MIMO}
\label{section_mimoNumerics} 
The multiple-input multiple-output detection problem is a fundamental problem  in digital communications. The multiple-input multiple-output channel can be modelled as follows: given a complex channel matrix $D \in \mathbb{C}^{k \times n}$, we observe the vector of received signals $$r := Dc + \sigma v,$$ where $\sigma > 0$, $c \in \mathcal{B}^n_m$ is the unobserved sent signal and $v$ is an unobserved vector of noise. The parameter $\sigma$ governs the so-called signal to noise ratio, see~\cite{jiang2021tightness}. Observing only $D$ and $r$,  MIMO is to retrieve the original signal $c$. We refer to e.g.,~\cite{jiang2021tightness,lu2019tightness,Yang2015FiftyYO} for more details on  MIMO. The maximum likelihood estimator (MLE) of $c$ is
\begin{align}
    \label{eqn_mle1NpHard}
    \argmin_{x \in \mathcal{B}^n_m} \left\| Dx - r \right\|^2.
\end{align}
The above can be approximated by solving instead
\begin{align}
    \label{eqn_MLEsdp1}
    \min_{X \in K_m} \left\langle \begin{bmatrix}
        r\conjTrans r & -r\conjTrans D \\
        -D\conjTrans r & D\conjTrans D
    \end{bmatrix}, X \right\rangle,
\end{align}
for $K_m = \elliptopeNoSS^{n+1}_m$ or $K_m = \triangleOp(\elliptopeNoSS^{n+1}_m)$. 
The complex elliptope  $\elliptopeNoSS^{n+1}_m$  is defined in \eqref{eqn_ElliptopeSDP},  $\triangleOp(\elliptopeNoSS^{n+1}_3)$ in \eqref{eqn_triangleSetE}, and $\triangleOp(\elliptopeNoSS^{n+1}_4)$ in \Cref{Random optimization problems}. 

 We investigate  tightness of our new relaxations numerically. We consider $m \in \{3,4\}$ and solve \eqref{eqn_MLEsdp1} for different choices of $K_m$.
Specifically, we set $n = 99$, and let $\sigma \in \{1,2,3\}$. For each combination of $m$ and $\sigma$, we generate \numRunsMimo{} matrices $D \in \mathbb{C}^{109 \times n}$ and vectors $v \in \mathbb{C}^{109}$; these are generated by drawing independent standard complex Gaussians\footnote{The number $a + b \imagUnit$ is said to be a standard complex Gaussian if $a$ and $b$ are independent, normally distributed random variables with mean $0$ and variance $1/2$ \cite[Definition 24.2.1]{lapidoth2017foundation}.}. For each such instance, we solve \eqref{eqn_MLEsdp1} for the different choices of $K_m$, and track the rate at which these CSDP relaxations return a (numerical) rank 1 solution. A returned solution matrix is deemed numerically rank 1 if its second largest eigenvalue is strictly smaller than $\zeroPrecisionMimo{}$. If a CSDP relaxation returns a rank 1 solution, the CSDP is said to be tight, since the optimal rank 1 solution can be used to obtain a provably optimal solution to \eqref{eqn_mle1NpHard}.  {Recall that \Cref{table_randomObjData} contains the number of (in)equality constraints for each $K_m$.}

The results are presented in \Cref{table_mimoTable} for $m = 3$, and \Cref{table_mimoTableM4} for $m = 4$.  {Both tables report the average computation time in seconds, for each relaxation  and each $\sigma$.} We see that adding the facet-defining inequalities of $\cutPolytopeNoSS^3_3$, see \eqref{eqn_facetDefining}, for $m=3$ ensures that the CSDP relaxation is tight at a reasonable rate. A similar observation can be made for $m = 4$, see \Cref{table_mimoTableM4}.  As expected, for increasing values of $\sigma$, the CSDP with facet inequalities is tight less often.
However, without the facet inequalities, the CSDP is observed to be tight only once out of the 1200 trials.  {The computation times for $\mathbf{T}\left(\mathcal{E}^{100}_m\right)$ are significantly longer than for $\mathcal{E}^{100}_m$. Additionally, we observe that the computation times depend on the rank 1 rates. Indeed, for $\mathcal{E}^{100}_m$, $m \in \{3,4\}$, both the rank 1 rates and the computation times are approximately constant for varying $\sigma$. 
In contrast, for $\mathbf{T}\left(\mathcal{E}^{100}_m\right)$, $m \in \{3,4\}$,  the computation times differ for varying $\sigma$. In particular, $\mathbf{T}\left(\mathcal{E}^{100}_4\right)$, $\sigma =3$, attains the lowest rank 1 rate, and highest average computation time.}

\begin{table}[ht]
\centering
\begin{tabular}{l|rrr|rrr}
\hline
& & & & & & \\[-8pt]
\multicolumn{1}{c|}{\multirow{2}{*}{$K_m$}} & \multicolumn{3}{c|}{$\sigma$} & \multicolumn{3}{c}{ {Avg. time (s) per   $\sigma$}} \\
\multicolumn{1}{c|}{} & 1~~~ & 2~~~ & 3~~~ & 1~~~ & 2~~~ & 3~~~ \\ \hline\rule{0pt}{2.6ex}
$\mathcal{E}^{100}_3$ & 0.2\% & 0.0\% & 0.0\% & 56.8 & 52.7  & 51.0  \\
$\mathbf{T}\left(\mathcal{E}^{100}_3\right)$ & 50.8\% & 54.5\% & 51.3\% & 331.7 & 320.1 & 310.5 \\ \hline
\end{tabular}
\caption{Average rate  {and computation time} (over \numRunsMimo{} runs) at which \eqref{eqn_MLEsdp1}, the CSDP relaxation of  MIMO  for $m=3$ returns a rank 1 solution. 
}
\label{table_mimoTable}
\end{table}

\begin{table}[ht]
\centering
\begin{tabular}{l|rrr|rrr}
\hline
& & & & & & \\[-8pt]
\multicolumn{1}{c|}{\multirow{2}{*}{$K_m$}} & \multicolumn{3}{c|}{$\sigma$} & \multicolumn{3}{c}{ {Avg. time (s) per   $\sigma$}} \\
\multicolumn{1}{c|}{} & 1~~~ & 2~~~ & 3~~~ & 1~~~ & 2~~~ & 3~~~ \\ \hline\rule{0pt}{2.6ex}
$\mathcal{E}^{100}_4$ & 0.0\% & 0.0\% & 0.0\% & 65.0 & 60.0 & 59.3 \\
$\mathbf{T}\left(\mathcal{E}^{100}_4\right)$ & 49.7\% & 38.7\% & 2.5\% & 271.6 & 290.5 & 342.4 \\ \hline
\end{tabular}
\caption{Average rate  {and computation time} (over \numRunsMimo{} runs) at which \eqref{eqn_MLEsdp1}, the CSDP relaxation of MIMO  for $m = 4$ returns a rank 1 solution. 
}
\label{table_mimoTableM4}
\end{table}

\subsection{Angular synchronization}
In the angular synchronization problem \cite{bandeira2017tightness}, one is given a matrix $C := cc\conjTrans + \sigma W \in \mathbb{C}^{n \times n}$, where $c \in \mathcal{B}^n_{\infty}$ is an unobserved signal, $\sigma > 0$, and $W \in \mathcal{H}^n$ models noise in receiving the signal $c$, which one attempts to retrieve. The maximum likelihood estimator  of $c$ is given by $\argmax_{x \in \mathcal{B}^n_{\infty}}x \conjTrans C x$, which may be approximated by
\begin{align}
    \label{eqn_angSynchroExample}
    \argmax_{X \in K} \left\langle cc\conjTrans + \sigma W, X \right\rangle,
\end{align}
  for $K = \elliptopeNoSS^n_\infty$, or some  second lifting of $\cutPolytopeNoSS^n_\infty$ such as \eqref{eqn_secondLiftFull}.

We investigate, for various $\sigma$, the rate at which the above CSDP returns a rank~1 solution for different choices of $K$. Specifically, we investigate the strength of a parametrized relaxation of $\cutPolytopeNoSS^n_\infty$, induced by basis $\mathscr{C}_p$, for $p \in [0,1]$. This basis contains all $n$ vectors $\alpha_i \in \{0,1\}^{n-1}$ satisfying $\sum_{i = 1}^{n-1} \alpha_i \leq 1$, plus the fraction $p$ of vectors $\alpha \in \{0,1\}^n$ satisfying $\sum_{i = 1}^{n-1} \alpha_i = 2$, chosen uniformly at random (and rounded to nearest integer). The number of elements in this basis can therefore be computed as
\begin{align}
    |\mathscr{C}_p | = n+ \left \lfloor p \binom{n-1}{2} \right\rceil.
\end{align}
The induced relaxation of $\cutPolytopeNoSS^n_\infty$ is denoted $\mathbf{L}^n(\mathscr{C}_p)$, defined analogously to \eqref{eqn_secondLiftFull}. This relaxation is closely related to the relaxations considered in \Cref{section_liftingsOfE3m}; note that 
\begin{align}
    \cutPolytopeNoSS^n_\infty \subseteq \mathbf{L}^n(\mathscr{C}_1) = \mathbf{L}^n(\widetilde{\mathscr{A}}) \subseteq \mathbf{L}^n(\mathscr{C}_p) \subseteq \mathbf{L}^n(\mathscr{C}_0) = \elliptopeNoSS^n_\infty \quad \forall p \in [0,1].
\end{align}

We fix $n = 25$, $c = \mathbf{1}_n$, and vary the level of noise $\sigma \in \left\{ (2/3)\sqrt{n},\sqrt{n},(4/3)\sqrt{n} \, \right\}$. The chosen levels of $\sigma$ are in line with \cite[Figure 2]{bandeira2017tightness}, where it is empirically shown that for $\sigma = (1/3)\sqrt{n}$ and $K = \elliptopeNoSS^n_\infty$, \eqref{eqn_angSynchroExample} very often admits an optimal rank 1 solution. Since we test stronger relaxations than $\elliptopeNoSS^n_\infty$, we have therefore chosen larger values of $\sigma$. We generate 100 instances of Hermitian matrices $W$, for which the upper triangular entries are independent standard complex Gaussians, and track the rate at which the different relaxations, and varying levels of $\sigma$, return rank 1 solutions (with the same zero precision of $\zeroPrecisionMimo{}$ as in \Cref{section_mimoNumerics}). Results are presented in \Cref{table_angularSynchro}. There, `\#cons.' denotes the number of (complex) equality constraints appearing in the CSDP, and `Avg.~T.~(s)' stands for the average computation time per relaxation in seconds. Note also that $|\mathscr{C}_p|$ denotes the size of the corresponding CSDP, which is equivalent to a real SDP of size $2|\mathscr{C}_p|$.  

At the tested levels of $\sigma$ it can be observed that increasing the relaxation size (i.e., $p \to 1$) provides significantly more accurate solutions. For $p = 1$, the CSDP is always observed to return a rank 1 solution, and already $p = 0.75$ offers near-perfect rank 1 rates. The drawback is that the running times also greatly increase. However, in practice, if one is interested in computing the MLE of the unobserved signal $c$, one should not start by solving the $\mathbf{L}^n(\mathscr{C}_1)$ or $\mathbf{L}^n(\mathscr{C}_{0.75})$ relaxation; it is more efficient to solve a smaller relaxation first, say $\mathbf{L}^n(\mathscr{C}_{0.25})$, and inspect the optimal solution. If the optimal solution is rank 1, it provides the MLE of $c$. If it is not rank 1, one can increase the value of $p$ and try again, continuing so until an optimal rank 1 matrix is observed.

\begin{table}[ht]
\centering
\begin{tabular}{lrrr|rrr}
\hline
&&&&&& \\[-8pt]
\multicolumn{1}{c}{\multirow{2}{*}{$p$}} & \multicolumn{1}{c}{\multirow{2}{*}{$|\mathscr{C}_p|$}} & \multicolumn{1}{c}{\multirow{2}{*}{Avg. time (s)}} & \multicolumn{1}{c|}{\multirow{2}{*}{$\#$cons.}} & \multicolumn{3}{c}{$\sigma / \sqrt{n}$} \\
\multicolumn{1}{c}{}& \multicolumn{1}{c}{} & \multicolumn{1}{c}{} & \multicolumn{1}{c|}{} & $2/3$ & $1$ & $4/3$ \\ \hline
0 & 25& 0.16 &25& 18\% & 4\% & 1\% \\
0.25 & 94& 29 &612& 61\% & 34\% & 27\% \\
0.5 & 163& 316&1947& 96\% & 88\% & 80\% \\
0.75 & 232 & 1975&4063& 99\% & 98\% & 99\% \\
1 & 301& 7946 &6925& 100\% & 100\% & 100\% \\ \hline
\end{tabular}
\caption{Average rate (over 100 runs) at which the CSDP relaxation of the angular synchronization problem \eqref{eqn_angSynchroExample}, over feasible sets $\mathbf{L}^n(\mathscr{C}_p)$ returns a rank 1 solution.}
\label{table_angularSynchro}
\end{table}

\section{Conclusions and future work}
\label{section_conclusions}
In this paper we study the complex cut polytope $\cutPolytope$, and its approximations  by semidefinite liftings. The considered approximations of  $\cutPolytope$ are in general not exact, but we investigate under what conditions they are, see  \Cref{ThmCut33}.

Our first approximation of $\cutPolytope$ is the complex elliptope $\elliptope$. To strengthen it, we add valid inequalities. In \Cref{section_strFramework} we introduce a framework for numerically comparing valid inequalities, and derive a number of cuts. In \Cref{section_exactDescriptionCut33} we determine some facet defining inequalities of  $\cutPolytopeNoSS^3_3$, and prove that these facets lead to an exact description of $\cutPolytopeNoSS^3_3$ (\Cref{ThmCut33}).  {In \Cref{sect:efficientReformulation} we show} that a CSDP whose feasible set is closed under complex conjugation and objective  function contains only real coefficients, can be equivalently reformulated as a real SDP of the same size (\Cref{proposition_realReformulationCondition}).

In \Cref{section_secondLiftingCutInf}, we consider the complex cut polytope $\cutPolytopeNoSS^n_\infty$. We derive several   new results for $n = 4$, the smallest value for which $\mathcal{E}^n_\infty$ is not exact (\Cref{lemma_equivalentSecondDegLift,thm_noR2EPinLE}). For general $n$ we provide a method for reducing the size of a second semidefinite lifting
without weakening the approximation of $\cutPolytopeNoSS^n_\infty$ (\Cref{lemma_degCompletionGeneral}). In \Cref{section_liftingsOfE3m} we investigate the extreme points of $\elliptope$ (finite $m$). We find an infinite family of rank 2 extreme points, which proves that the first semidefinite lifting of $\cutPolytope$ is never exact (\Cref{lemma_firstLiftingNeverTight}). 

In \Cref{section_numericalResults} we investigate numerically the value  of adding   the valid inequalities introduced in \Cref{section_strFramework}  to  $\elliptopeNoSS^{n}_m$, $m=3,4$ for CSDPs with randomly generated objectives and the MIMO detection problem. 
The numerical results show that adding our cuts significantly improves the bounds  as well as greatly increases the rate at which the CSDPs return rank 1 solutions. We also test second semidefinite liftings for the angular synchronization problem, and observe  that those  induce much tighter CSDP relaxations as the size of a basis increases, at the cost of greater computational effort.

For future work, it would be interesting to have
 \Cref{conjecture_exactnessOfLifting} resolved. We are also interested in finding faster methods for solving large CSDPs arising from $\mathbf{L}^n(\mathscr{C}_{p})$. \Cref{table_angularSynchro} shows clearly that larger values of $p$ greatly improve the strength of relaxations, although the required computational effort (both time and memory) to solve them with off-the-shelf interior point method solvers quickly becomes prohibitive. A tailored solver might be able to handle much larger values of $n$ than $25$.  {Many approaches for improving the scalability of real SDP have been proposed in the literature, most of them via exploiting some form of sparsity in the objective function and/or constraints, see e.g., \cite{wang2021tssos,wang2022cs,chordalTSSOS,waki2006sums} (note that our numerical experiments involved only dense matrices).
 The authors of \cite{wang2023real} remark that some of those approaches can be also applied to CSDPs with only real coefficients.
  Future research is to investigate how these methods translate  for the case of CSDPs over relaxations of~$\cutPolytope$.
 }
 
\medskip\medskip

\noindent \textbf{Acknowledgements.} We thank two anonymous reviewers for providing us with insightful feedback on an earlier version of this manuscript.\\

\noindent \textbf{Conflict of interest statement.} The authors have no conflicts of interest to declare.

\bibliographystyle{abbrvnat}
\bibliography{myRefs.bib}

\setcounter{lemma}{0}
\renewcommand{\thelemma}{\Alph{section}\arabic{lemma}}
\appendix
\section{Proofs and auxiliary lemmas}
\label{section_appendixProofs}

\subsection{Proofs}
\label{section_replacedProofs}

For the proof of \Cref{lemma_finalRankThreeExcept}, on \cpageref{lemma_finalRankThreeExcept}, we require the following definitions from convex geometry.
\begin{definition}
    \label{def_convexFace}
    For a convex set $C$, and a subset $F \subseteq C$, we say that $F$ is a face of $C$ if it satisfies the following: if $x,y \in C$ and $t \in (0,1)$ are such that $tx+(1-t)y \in F$, then $x,y \in F$.
\end{definition}

\begin{definition}
    \label{def_exposedFace}
    A face $F$ of $C$ is said to be exposed, if $F$ is equal to the intersection of some hyperplane with $C$, or if $F = C$.
\end{definition}

We now present the proof.
\finalRankThreeException*
\begin{proof}
    The direction $\Leftarrow$ is trivial. For the reverse direction, assume that $\cutPolytopeNoSS^4_\infty$ is strictly contained in $\liftingSet{1}$. Then $\liftingSet{1}$ contains an extreme point, say $Y  \in \partial \liftingSet{1}$, which is not in $\cutPolytopeNoSS^4_\infty$. 
    Since matrices of rank one are elements of $\cutPolytopeNoSS^4_\infty$, it follows that $\rankOp(Y) \geq 2$.
       Since $Y \in \liftingSet{1}$, $Y \notin P$ (\Cref{thm_noR2EPinLE}). Then, by \Cref{lemma_rank2PointsInCut}, $\rankOp(Y) \geq 3$. 
    
    We now show that $\rankOp(Y) \leq 3$, using \Cref{def_convexFace,def_exposedFace}. Define
    \begin{align}
        \mathcal{F}^{-1}(Y) := \{ Z \in \mathcal{F}(\mathscr{B}_1) \, | \, Z_{1:4:,1:4} = Y \}
    \end{align}
    as the set of matrices in $\mathcal{F}(\mathscr{B}_1)$ with $Y$ as upper left submatrix. We show that $\mathcal{F}^{-1}(Y)$ is a face of $\mathcal{F}(\mathscr{B}_1)$. Let $U_1, \, U_2 \in \mathcal{F}(\mathscr{B}_1)$ and $t \in (0,1)$ such that $t U_1 + (1-t) U_2 \in \mathcal{F}^{-1}(Y)$. Denote by $Y_i$ the upper left $4 \times 4$ submatrix of $U_i$, $i \in [2]$. Since $U_i \in \mathcal{F}(\mathscr{B}_1)$, their submatrices $Y_i \in \liftingSet{1}$. Now
    \begin{align}
        t U_1 + (1-t) U_2 \in \mathcal{F}^{-1}(Y) \,\,\Rightarrow \,\, Y = tY_1 + (1-t)Y_2 \,\,\Rightarrow \,\, Y = Y_1 = Y_2 \,\,\Rightarrow \,\, U_1, U_2 \in \mathcal{F}^{-1}(Y),
    \end{align}
    where the second implication follows from the fact that $Y$ is an extreme point of $\liftingSet{1}$.    
    Thus $\mathcal{F}^{-1}(Y)$ is a face of $\mathcal{F}(\mathscr{B}_1)$ and the extreme points of $\mathcal{F}^{-1}(Y)$ form a subset of the extreme points of $\mathcal{F}(\mathscr{B}_1)$, see e.g., \cite[Proposition 8.6]{simon2011convexity}. 
    
    Let us consider the extreme points of $\mathcal{F}(\mathscr{B}_1)$ in more detail. The set $\mathcal{F}(\mathscr{B}_1)$ is a (complex) spectrahedron. By \cite[Corollary 1]{ramana1995some}, every face of a spectrahedron is exposed\footnote{Although the work \cite{ramana1995some} studies real spectrahedra, Section 1.4, Item 5 of the same work states that real and complex spectrahedra can be considered equivalent in the sense of their geometry.}. In particular, the extreme points of $F(\mathscr{B}_1)$ are faces of $F(\mathscr{B}_1)$, and are thus exposed. Therefore, if $V$ denotes such an extreme point, there exists a $Q \in \mathcal{H}^6$ such that 
    \begin{align}
        V = \argmax_{X \in \mathcal{F}(\mathscr{B}_1)} \langle Q, X \rangle,
    \end{align}
    i.e, $V$ can be written as the unique optimum of some CSDP over $\mathcal{F}(\mathscr{B}_1)$. Such optima have rank at most $\lfloor \sqrt{k} \rfloor$, for $k$ the number of affine constraints of the corresponding CSDP, see e.g. \cite[Theorem 5.1]{lemon2016low}. As $\mathcal{F}(\mathscr{B}_1)$ has 11 affine constraints, it follows that $\rankOp(V) \leq \lfloor \sqrt{11} \rfloor = 3$. Since $V$ was arbitrarily chosen, any extreme point of $\mathcal{F}(\mathscr{B}_1)$ has rank at most three.

    We now return to $Y$. Since $Y$ is a submatrix of matrices in $\mathcal{F}^{-1}(Y)$, it follows that
    \begin{align}
        \rankOp(Y) \leq \min_{Z \in \mathcal{F}^{-1}(Y)} \rankOp(Z).
    \end{align}
    We have shown that $\mathcal{F}^{-1}(Y)$ contains extreme points of $\mathcal{F}(\mathscr{B}_1)$, and that such extreme points have rank at most three. Therefore, $ \min_{Z \in \mathcal{F}^{-1}(Y)} \rankOp(Z) \leq 3$. Because we have previously deduced that $\rankOp(Y) \geq 3$, it follows that $\rankOp(Y) = 3$. Since the interior of $\mathcal{E}^4_\infty$ contains only rank 4 points, it follows that $Y \in \partial \elliptopeNoSS^4_\infty$. 

    {We now prove the last claim on $Y$, stating that $Y = \lambda J_4 + (1-\lambda) A$, for some $\lambda \in (0,1)$ and $A \in P$. Let us write $Y$ as 
    \begin{align}
        \label{eqn_conditionsXUWV}
        Y = B\conjTrans B, \text{ for } B = \begin{bmatrix}
            x & u & w & v
        \end{bmatrix}, \, x, \, u, \, w, \, v \in \mathbb{C}^3 \text{ and } \| x \| = \| u \| = \| w \| = \| v \| = 1.
    \end{align}
    Note that, given $Y$, $B$ is unique up to unitary multiplication, i.e., $B \to QB$, for $Q$ a unitary matrix in $\mathbb{C}^{3 \times 3}$. Moreover,
    \begin{align}
        \label{eqn_C_form}
        \lambda J_4 + (1-\lambda) A = C\conjTrans C, \text{ for } C = \begin{bmatrix}
            \sqrt{\lambda} \mathbf{1}^\top_4 \\[1ex]
            \sqrt{1 - \lambda} G
        \end{bmatrix},
    \end{align}
    for $G$ an EGF, see \Cref{def_EGF}. Thus, to prove the last claim on $Y$, we look for a unitary matrix $Q$ such that $QB$ is of the form presented in \eqref{eqn_C_form}. Let $z \in \mathbb{C}^3$ be such that $z\conjTrans$ is the first row of $Q$. This vector $z$ must satisfy
    \begin{align}
        \label{eqn_conditionZ}
         \| z \| = 1 \text{ and } | z\conjTrans x | = |z\conjTrans u | = |z\conjTrans w | = |z\conjTrans v | \, \,  (= \sqrt{\lambda}),
    \end{align}
    and we continue by proving its existence. Let us formulate a CSDP, with $zz\conjTrans$ as a feasible solution (if it exists):
    \begin{equation}
    \begin{alignedat}{2}
        \text{find }& Z \in \mathcal{H}^3_+ \\
        \text{ s.t }& \langle Z, xx\conjTrans  - uu\conjTrans \rangle &&= 0, \\
        &\langle Z, xx\conjTrans  - ww\conjTrans \rangle &&= 0, \\
        &\langle Z, xx\conjTrans  - vv\conjTrans \rangle &&= 0. \\
    \end{alignedat}
    \end{equation}
    By \eqref{eqn_conditionZ}, $zz\conjTrans$ (if it exists) is feasible to the above CSDP. Note also that $I$ is feasible to the above CSDP, by \eqref{eqn_conditionsXUWV}. Invoking \cite[Theorem 2.2]{ai2011new}, we find that the above CSDP admits a rank one solution, say $yy\conjTrans$. Letting $z = y / \| y\|$ shows that a  suitable vector $z$ exists. Let $Q$ now be any unitary matrix with such a suitable $z\conjTrans$ as its first row. By construction, the entries in the first row of $QB$ all have equal magnitude, but they are not necessarily purely real, as is required in \eqref{eqn_C_form}. 
  
    Let $r \in \mathbb{R}^4$ be the vector containing the arguments of the entries in the first row of $QB$,  {and set $\alpha = \exp{(r\imagUnit)} \in \mathcal{B}^4_\infty$,} where $\exp(\cdot)$ is evaluated element wise.  {Recall the group action $f_\alpha$ from \Cref{section_groupStructure}. By the fact that $\cutPolytopeNoSS^n_\infty$ is closed under the group action $f_\alpha$, we have}
    \begin{align}
        Y \notin \cutPolytopeNoSS^4_\infty \iff  {f_\alpha(Y) = (\alpha \alpha\conjTrans) \Hadamard Y = \text{Diag}(\alpha) Y \, \text{Diag}(\overline{\alpha}) \notin \cutPolytopeNoSS^4_\infty.}
    \end{align}
    It follows that $ {f_\alpha(Y)} = \widetilde{B}\conjTrans \widetilde{B}$, for $\widetilde{B} = QB \,  {\text{Diag}(\overline{\alpha})}$, and matrix $\widetilde{B}$ is of the desired form \eqref{eqn_C_form}. Note also that $ {f_\alpha(Y)}$ and $Y$ are both extreme points of $\liftingSet{1}$.}
\end{proof}

Below we prove \Cref{lemma_degCompletionGeneral}, on \cpageref{lemma_degCompletionGeneral}.
\degCompletionGeneral*
\begin{proof}
    It suffices to show that $\widetilde{\mathscr{A}}^p \models \mathscr{A}^p$ for all $p \geq 3$. We fix some $p \geq 3$. For notational convenience, we omit the superscript $p$ in sets $\widetilde{\mathscr{A}}^p$ and $\mathscr{A}^p$.
    
    The proof follows again from PSD matrix completion theory \cite{grone1984positive}. Let us first consider extending $\widetilde{\mathscr{A}}$ by a single vector $2\mathbf{e}_1 := 2 \cdot (1,0,\ldots,0)^\top \in \mathscr{A}$ (the unit vectors $\mathbf{e}_i$, $i \in [p]$ are defined similarly). We denote this new set $\mathscr{D} := \mathscr{A} \cup \{ 2 \mathbf{e}_1 \}$, and consider the problem of completing matrix  $X \in \mathcal{F}(\widetilde{\mathscr{A}})$ to matrix in $Z \in \mathcal{F}(\mathscr{D})$, for which $Z_{1:k,1:k} = X$, with $k = |\widetilde{\mathscr{A}}|$.
    
    As in the proof of \Cref{lemma_equivalentSecondDegLift}, the associated graph $\mathcal{G}$, see \eqref{eqn_patternGraph}, is again chordal. Thus it remains to verify that matrix $Z_\mathcal{J}$, for $\mathcal{J}$ as in \eqref{eqn_setJDef}, is similar to a submatrix of $X$. Note that
    \begin{align}
        \mathcal{J} = \left\{ \alpha \in \mathscr{D} \, \middle| \, Z_{\alpha, 2 \mathbf{e}_1} \neq \textit{?} \right\} = \left\{ (\alpha_1, \ldots, \alpha_p)^\top \in \mathscr{D} \, \middle| \, \alpha_1 \in \{1,2\} \right\},
    \end{align}
    where the rows of $Z$ are indexed by elements of $\mathscr{D}$.
    When entry $Z_{\alpha, 2 \mathbf{e}_1} \neq \textit{?}$, its value, in terms of the sequence $\mathsf{y}$, is contained in the following set
    \begin{align}
         V := \left\{ (M_{\mathscr{D}}(\mathsf{y}))_{\alpha, 2 \mathbf{e}_1} \, \middle| \, Z_{\alpha, 2 \mathbf{e}_1} \neq \textit{?} \right\} = \{ \mathsf{y}_{\beta} \, | \, \beta = -\mathbf{e}_1 + \mathbf{e}_i, i \in [n] \} \cup \{ \mathsf{y}_{-\mathbf{e}_1} \}.
    \end{align}
    Observe that all possible values in $V$ also appear in a single column of $X \in \mathcal{F}(\widetilde{\mathscr{A}})$. Specifically,
    \begin{align}
        V \subseteq \{ \left(M_{\widetilde{\mathscr{A}}}(\mathsf{y}) \right)_{\alpha, \mathbf{e}_1} \, | \, \alpha \in \widetilde{\mathscr{A}} \},
    \end{align}
    i.e., all values in $V$ also appear in the column of $X$ that is indicated by $\mathbf{e}_1 \in \widetilde{\mathscr{A}}$. Note that this implies that $Z_{\mathcal{J}}$ is similar to a submatrix of $X$, and thus PSD. Therefore, $\widetilde{\mathscr{A}} \models \widetilde{\mathscr{A}} \cup \{ 2\mathbf{e}_1 \}$. In a similar manner, it can be shown that
    \begin{align}
    \widetilde{\mathscr{A}} \models \widetilde{\mathscr{A}} \cup \{ 2\mathbf{e}_1 \} \models \widetilde{\mathscr{A}} \cup \{ 2\mathbf{e}_1, 2 \mathbf{e}_2 \} \models \cdots \models \widetilde{\mathscr{A}} \cup \{ 2\mathbf{e}_1, 2 \mathbf{e}_2, \ldots, 2 \mathbf{e}_p \} = \mathscr{A},
    \end{align}
    which proves the result.
\end{proof}

Below we prove \Cref{lemma_generalRank2EPofE}, on \cpageref{lemma_generalRank2EPofE}.
\generalRankTwoEPofE*
\begin{proof}
We show that $N$ satisfies  {\Cref{thm_item1,thm_item3}} of \Cref{lemma_elliptopeExtPointsR2}.
Denote by $\mathbf{e}, u, v \in \mathbb{C}^2$ the first three columns of $G$ (in that order). Observe that $u_1$ is a convex combination of $1$ and $\exp(2\pi \imagUnit / m)$, both $m$-roots of unity, and therefore 
\begin{align}
    N_{12} = \mathbf{e}\conjTrans u = u_1  = \cos{\left( \pi /m \right)} e^{\pi \imagUnit/m} \in \partial \ConvHull{(\mathcal{B}_m)} \setminus \mathcal{B}_m.
\end{align}
Hence, $N$ satisfies \Cref{thm_item1} of \Cref{lemma_elliptopeExtPointsR2}. Let us now verify \Cref{thm_item3} by computing a possible perturbation. We have
\begin{align}
    u_1 \overline{u}_2 = \frac{\sin(2 \pi /m)}{2} e^{\pi \imagUnit /m}.
\end{align}
To determine $\alpha$ up to real scaling, see \eqref{eqn_RmatrixEq}, we assume without loss of generality that $c = 1$. Then
\begin{align}
    \frac{\sin(2\pi /m )}{2} \begin{bmatrix}
        e^{\pi \imagUnit / m } & e^{-\pi \imagUnit / m } \\
        e^{-\pi \imagUnit / m } & e^{\pi \imagUnit / m }
    \end{bmatrix} \begin{bmatrix}
        \alpha \\ 
        \overline{\alpha}
    \end{bmatrix} = \begin{bmatrix}
        -\sin^2{(\pi/m)} \\[0.2cm]
        -\cos^2{(\pi/m)}
    \end{bmatrix} = \begin{bmatrix}
        \frac{\cos(2 \pi /m)-1}{2} \\[0.2cm]
        \frac{-\cos(2 \pi /m)-1}{2}
    \end{bmatrix},
\end{align}
and is thus given by
\begin{align}
   \begin{bmatrix}
       \alpha \\ \overline{\alpha}
   \end{bmatrix} &= \frac{1}{\sin(2\pi /m )} \begin{bmatrix}
        e^{\pi \imagUnit / m } & e^{-\pi \imagUnit / m } \\
        e^{-\pi \imagUnit / m } & e^{\pi \imagUnit / m }
    \end{bmatrix}^{-1}  \begin{bmatrix}
        \cos(2 \pi /m)-1 \\
        -\cos(2 \pi /m)-1
    \end{bmatrix} \\
    &= \frac{1}{2 \sin^2(2\pi /m ) \imagUnit} \begin{bmatrix}
        e^{\pi \imagUnit / m } & -e^{-\pi \imagUnit / m } \\
        -e^{-\pi \imagUnit / m } & e^{\pi \imagUnit / m }
    \end{bmatrix}  \begin{bmatrix}
        \cos(2 \pi /m)-1 \\
        -\cos(2 \pi /m)-1
    \end{bmatrix}.
\end{align}
This yields
\begin{align}
    \alpha &= \frac{-\imagUnit}{2 \sin^2\left(\frac{2\,\pi }{m}\right)} \left(  \,\cos\left(\frac{2\,\pi }{m} \right)  \left[ {e}^{-\frac{\pi \,\imagUnit}{m}} + {e}^{\frac{\pi \,\imagUnit}{m}} \right] + \left[ {e}^{-\frac{\pi \,\imagUnit}{m}} - {e}^{-\frac{\pi \,\imagUnit}{m}} \right] \right) \\
    &= \frac{-1}{\sin^2\left(\frac{2\,\pi }{m}\right)} \left(  \sin\left( \frac{\pi}{m} \right) + \cos\left( \frac{\pi}{m}\right) \cos\left( \frac{2\pi}{m} \right) \imagUnit \right).
\end{align}
Accordingly, $b_{12}$, see \eqref{eqn_BmatrixDef}, is computed as follows (using $c = 1$):
\begin{align}
    b_{12} = \mathbf{e}\conjTrans \begin{bmatrix}
        0 & \overline{\alpha} \\
        \alpha & 1
    \end{bmatrix} u = u_2 \overline{\alpha}.
\end{align}
It remains to show that $\mathrm{Re}(\overline{\nu} b_{12}) \neq 0$, for $\nu = \exp( \pi \imagUnit /m)$. To do so, note that  {$u_2 = \sin{(\pi/m)} \in \mathbb{R}$}, and thus,
\begin{align}
    \mathrm{Re}(\overline{\nu} b_{12}) &= \frac{-u_2}{\sin^2\left(\frac{2\,\pi }{m}\right)} \mathrm{Re}\left( e^{-\pi \imagUnit /m } \left[ \sin\left(\frac{\pi}{m}\right)-\imagUnit \cos\left(\frac{\pi}{m}\right)\cos\left(\frac{2\pi}{m}\right) \right] \right) \\
    &= \frac{-u_2}{\sin^2\left(\frac{2\,\pi }{m}\right)} \cos\left(\frac{\pi}{m}\right)\sin\left(\frac{\pi}{m}\right)\left[1-\cos\left(\frac{2\pi}{m}\right)\right] \neq 0, \text{ since } m > 2.
\end{align}
 {Since $b_{12}$ is uniquely determined (up to real scaling), it follows that there does not exist a perturbation satisfying $\mathrm{Re}(\overline{\nu} b_{12}) = 0$. Hence, }
$N$ satisfies \Cref{thm_item3} of \Cref{lemma_elliptopeExtPointsR2}. Thus $N$ is a rank 2 extreme point of $\mathcal{E}^3_m$.
\end{proof}

\renewcommand{\theHsection}{A\arabic{section}}
\subsection{Auxiliary lemmas}

The following result is used in the proof of \Cref{lemma_strengthOfFacet}, \cpageref{lemma_strengthOfFacet}.
\begin{lemma}
\label{lemma_maxValueOverE} 
For $Q$ as in \eqref{eqn_QcutDefinition},
\begin{align}
\max_{X \in \mathcal{E}^3_3} \langle Q, X \rangle =    \frac{3\cos\left(\frac{\pi}{18}\right)}{2\cos\left(\frac{\pi}{9}\right)}.
\end{align}
\end{lemma}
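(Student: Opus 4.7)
I would prove the identity by semidefinite duality. The primal $p^{*}:=\max_{X\in\mathcal{E}^{3}_{3}}\langle Q,X\rangle$ satisfies strong duality with the dual \eqref{eqn_mimoSDPdual} specialised to $n=m=3$, because $I_{3}$ is strictly primal feasible; so it suffices to exhibit a primal-dual feasible pair attaining the target value $T:=\tfrac{3\cos(\pi/18)}{2\cos(\pi/9)}$.

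To locate the primal optimum, I first argue that it must be a rank-2 extreme point of $\mathcal{E}^{3}_{3}$. By compactness, the maximum is attained at an extreme point. A rank-3 extreme point cannot exist: extremity of an interior-PSD matrix forces each off-diagonal entry to be pinned to a vertex of $\ConvHull(\mathcal{B}_{3})$, giving $|X_{ij}|=1$ and hence, via the vanishing $2\times 2$ principal minors, rank at most $1$ — a contradiction. Rank-$1$ extreme points of $\mathcal{E}^{3}_{3}$ lie in $\cutPolytopeNoSS^{3}_{3}$ and yield objective at most $\sqrt{3}/2<T$ (by the enumeration already invoked in the proof of \Cref{lemma_strengthOfFacet}). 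Hence the optimum is rank $2$, and by \Cref{lemma_simplePara} has the form $X^{*}=G\conjTrans G$ with $G=[\mathbf{e},u,v]$, $\|u\|=\|v\|=1$; \Cref{lemma_elliptopeExtPointsR2} further guarantees that at least one off-diagonal entry lies on $\partial\ConvHull(\mathcal{B}_{3})\setminus\mathcal{B}_{3}$.

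Next I exploit the involution $X\mapsto P\overline{X}P^{\top}$, with $P$ the permutation swapping indices $1$ and $3$. This map preserves both $\mathcal{E}^{3}_{3}$ and the objective (because $Q_{12}=Q_{23}=\tfrac{\imagUnit}{2}$ and $Q_{13}=\tfrac{1}{2}e^{\pi\imagUnit/6}$), so by averaging I may assume $X^{*}_{12}=X^{*}_{23}$. Writing the remaining off-diagonal entries in polar form, imposing the rank-$2$ condition $\det X^{*}=0$ together with the single active triangle facet on $X^{*}_{13}$ (the one perpendicular to $e^{-\pi\imagUnit/6}$, which is the direction of the $X_{13}$-coefficient in the objective), reduces the maximisation to a one-variable trigonometric problem. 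Setting the derivative to zero and simplifying via the triple-angle identity $\cos 3\theta=4\cos^{3}\theta-3\cos\theta$ produces a critical angle that is a rational multiple of $\pi/18$; substituting back yields primal value $T$.

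For the dual certificate, I set $\omega^{*k}_{ij}=0$ for every $(i,j,k)$ except the triple $(1,3,k^{*})$ corresponding to the facet active at $X^{*}_{13}$. The complementary slackness condition $S^{*}X^{*}=0$ then determines $\mu^{*}_{1},\mu^{*}_{2},\mu^{*}_{3}$ and the single nonzero $\omega^{*}$ uniquely, and forces $\mathrm{range}(S^{*})$ to be one-dimensional (the kernel of the rank-$2$ $X^{*}$), so $S^{*}=ww\conjTrans\succeq 0$ automatically. The main obstacle will be verifying algebraically that the dual objective $\mathbf{1}^{\top}\mu^{*}+\tfrac{1}{2}\omega^{*}$ evaluates to exactly $T$; the appearance of $\cos(\pi/18)$ and $\cos(\pi/9)$ is forced by the $e^{\pi\imagUnit/6}$ coefficient of $Q_{13}$ together with the $\pi/3$ slope of the triangle facets of $\ConvHull(\mathcal{B}_{3})$, and the triple-angle identity is the key algebraic mechanism.
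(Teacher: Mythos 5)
Your high-level strategy (strong duality via Slater, locate a symmetric rank-$2$ optimizer, then certify with a complementary dual slack) is reasonable, but it diverges from the paper and contains a concrete error in the combinatorics of the active constraints that would make the plan fail as described. The paper simply exhibits the explicit optimizer
\begin{align}
Y = r\begin{bmatrix}
r^{-1} & e^{4\pi\imagUnit/9} & e^{2\pi\imagUnit/9}\\
e^{-4\pi\imagUnit/9} & r^{-1} & e^{4\pi\imagUnit/9}\\
e^{-2\pi\imagUnit/9} & e^{-4\pi\imagUnit/9} & r^{-1}
\end{bmatrix},\qquad r = \frac{1}{2\cos(\pi/9)},
\end{align}
verifies $\langle Q,Y\rangle = T$ by direct computation, and then shows the upper bound by a one-line rewriting of $\langle Q,X\rangle$ as $T$ minus a nonnegative combination of a rank-one PSD Gram term and the linear facet slacks — no rank classification, no optimization over a parametrized family, no complementary-slackness bootstrapping.

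Your rank-exclusion argument (rank-$3$ extreme points would need every off-diagonal to sit at a vertex of $\ConvHull(\mathcal{B}_3)$, forcing $|X_{ij}|=1$ and hence rank~$1$) is sound, and the averaging trick does produce a symmetric optimizer. The trouble comes next. First, you posit a \emph{single} active triangle facet, on $X^*_{13}$, ``perpendicular to $e^{-\pi\imagUnit/6}$.'' But the facet normals of $\ConvHull(\mathcal{B}_3)$ are $e^{\pi\imagUnit/3}$, $-1$, and $e^{-\pi\imagUnit/3}$ (see \eqref{eqn_convBineqElement}); $e^{-\pi\imagUnit/6}$ is not one of them, so this constraint does not exist. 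Second, and more importantly, at the actual optimizer \emph{all three} off-diagonal entries lie on the boundary of $\ConvHull(\mathcal{B}_3)$: one checks that $\mathrm{Re}(e^{-\pi\imagUnit/3}Y_{ij}) = r\cos(\pi/9) = 1/2 = \cos(\pi/3)$ for every $i<j$, so the facet with normal $e^{\pi\imagUnit/3}$ is tight on each of $Y_{12}$, $Y_{13}$, $Y_{23}$. Your proposed dual, with $\omega^{*k}_{ij}=0$ except for a single triple $(1,3,k^*)$, therefore cannot satisfy complementary slackness: the dual optimum necessarily places (equal) positive weight on that facet for all three index pairs, exactly as the paper's decomposition does (one PSD term from the rank-one Gram matrix, plus equal facet multipliers on every pair). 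Finally, the steps you flag as ``the main obstacle'' — solving the reduced trigonometric problem and verifying the dual value hits $T$ — are precisely the content of the lemma, so as written this is an approach sketch rather than a proof. If you want to salvage this route, replace the single $X_{13}$ facet by the common facet $\mathrm{Re}(e^{-\pi\imagUnit/3}X_{ij})=1/2$ on all three pairs, and the triple-angle identity $4\cos^3(\pi/9)-3\cos(\pi/9)=\cos(\pi/3)=1/2$ will indeed appear when you impose $\det X^*=0$.
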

\begin{proof}
         For any $Y \in \mathcal{E}^3_3$, the value $\langle Q, Y \rangle$ provides a lower bound on $\max_{X \in \mathcal{E}^3_3} \langle Q, X \rangle $. Thus,
    \begin{align}
        \max_{X \in \mathcal{E}^3_3} \langle Q, X \rangle \geq \left\langle Q,  r \begin{bmatrix}
        r^{-1} & e^{4 \pi \imagUnit / 9 }& e^{2 \pi \imagUnit/9 }\\[0.5cm]
        e^{-4 \pi \imagUnit / 9 }& r^{-1} & e^{4 \pi \imagUnit / 9 } \\[0.5cm]
        e^{-2 \pi \imagUnit / 9 } & e^{-4 \pi \imagUnit /9}& r^{-1} 
    \end{bmatrix} \right\rangle =  \frac{3\cos\left(\frac{\pi}{18}\right)}{2\cos\left(\frac{\pi}{9}\right)}, \text{ for } r = \frac{1}{2\cos\left(\frac{\pi}{9}\right)}.
    \end{align}
    For the matching upper bound, we have that  for any $X \in \mathcal{E}^3_3$, the inner product $ \langle Q, X \rangle$ can be rewritten as follows: 
    \begin{align}
        \langle Q, X \rangle &= \frac{3\cos\left(\frac{\pi}{18}\right)}{2\cos\left(\frac{\pi}{9}\right)} - q \left\langle  \begin{bmatrix}
          1 & e^{-4 \pi \imagUnit / 9} & e^{-8 \pi \imagUnit / 9} \\
          e^{4 \pi \imagUnit / 9} & 1 & e^{-4 \pi \imagUnit / 9} \\
          e^{8 \pi \imagUnit / 9} & e^{4 \pi \imagUnit / 9} & 1
          \end{bmatrix}, X \right\rangle - \left( \sqrt{3} - \frac{2q}{r}\right) \sum_{1 \leq i < j \leq 3} \mathrm{Re}\left( \frac{1}{2} - X_{ij} \right) \\
          & \leq  \frac{3\cos\left(\frac{\pi}{18}\right)}{2\cos\left(\frac{\pi}{9}\right)} \quad \forall X \in \mathcal{E}^3_3,
    \end{align}
    where $q =(4\sin\left(2\pi/ 9\right))^{-1}$. The above upper bound follows from the fact that $\mathrm{Re}\left( \frac{1}{2} - X_{ij} \right) \geq 0$ for all $i$ and $j$, $ \sqrt{3} - \frac{2q}{r} \geq 0$, and the matrix in the inner product being positive semidefinite (one can verify that this matrix is rank 1). This proves the result.
\end{proof}

    The following result is used in the proof of \Cref{lemma_validCutForL}, \cpageref{lemma_validCutForL}.
\begin{lemma}
    \label{lemma_strengthOfH}
    For  any integer $m \geq 3$   or $m=\infty$, and for 
    $H$ as in \eqref{eqn_Hcut}, we have $\strFunc(H,m)  = \frac{2}{\sqrt{3}}$.
\end{lemma}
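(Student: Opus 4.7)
The plan is to compute the numerator and denominator of $\strFunc(H,m)$ separately, and show the ratio equals $2/\sqrt{3}$ for every $m\geq 3$ and for $m=\infty$. For the denominator, we already know from \eqref{eqn_Hcut} and \Cref{lemma_validCutForL} that $\langle H, X\rangle\leq 6$ holds throughout $\cutPolytopeNoSS^4_\infty$, hence also over $\cutPolytopeNoSS^4_m\subseteq \cutPolytopeNoSS^4_\infty$. Matching equality is immediate since $J_4=\mathbf{1}_4\mathbf{1}_4^\top\in\cutPolytopeNoSS^4_m$ for all $m\geq 2$, and $\mathbf{1}_4^\top H\mathbf{1}_4$ equals the sum of all entries of $H$, which is $6$ (the imaginary off-diagonals cancel in conjugate pairs). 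Therefore $\max_{X\in\cutPolytopeNoSS^4_m}\langle H,X\rangle = 6$ for every admissible $m$.

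For the numerator, the key algebraic fact is that $H^2 = 3I_4$. This is a direct computation entry-by-entry; once established, it forces every eigenvalue of $H$ to lie in $\{-\sqrt{3},\sqrt{3}\}$, and since $\mathrm{tr}(H)=0$ the multiplicities must both equal $2$. Hence $\sqrt{3}I_4 \pm H\succeq 0$. For the upper bound, any $X\in\elliptopeNoSS^4_m$ satisfies $X\succeq 0$ and $\mathrm{diag}(X)=\mathbf{1}_4$, so $0\leq\langle \sqrt{3}I_4-H,X\rangle = 4\sqrt{3}-\langle H,X\rangle$, giving $\langle H,X\rangle\leq 4\sqrt{3}$. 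For a matching lower bound, I propose the candidate $X^*:=I_4 + H/\sqrt{3} = (\sqrt{3}I_4+H)/\sqrt{3}$. This is PSD by the same spectral identity, it has unit diagonal because $\mathrm{diag}(H)=\mathbf{0}$, and it attains $\langle H,X^*\rangle = \mathrm{tr}(H)+\mathrm{tr}(H^2)/\sqrt{3} = 12/\sqrt{3}=4\sqrt{3}$.

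The main obstacle is verifying $X^*\in\elliptopeNoSS^4_m$ for finite $m$, i.e., that every off-diagonal entry of $X^*$ lies in $\ConvHull(\mathcal{B}_m)$. The off-diagonal entries are exactly $\{1/\sqrt{3},\,\imagUnit/\sqrt{3},\,-\imagUnit/\sqrt{3}\}$, all of modulus $1/\sqrt{3}$. For $m=\infty$ membership is trivial since $1/\sqrt{3}\leq 1$. For $m\geq 4$, the inscribed disk of the regular $m$-gon $\ConvHull(\mathcal{B}_m)$ has radius $\cos(\pi/m)\geq \cos(\pi/4)=1/\sqrt{2}>1/\sqrt{3}$, so all three values sit inside that disk and hence inside the polygon. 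The delicate case is $m=3$, where $\cos(\pi/3)=1/2<1/\sqrt{3}$ and the three off-diagonal values lie outside the inscribed disk of the triangle $\ConvHull(\mathcal{B}_3)$. Here I would invoke the half-space description \eqref{eqn_convBineqElement} directly: for each of the three normal directions $\nu_k=\exp((2k-1)\pi\imagUnit/3)$, $k\in[3]$, I would check the inequality $\mathrm{Re}(\overline{\nu}_k z)\leq\cos(\pi/3)=1/2$ for $z\in\{1/\sqrt{3},\pm\imagUnit/\sqrt{3}\}$ by a short trigonometric computation; several of these hold with equality (meaning the points $\pm\imagUnit/\sqrt{3}$ lie on edges of the triangle), which explains geometrically why this case is tight.

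Combining the two maxima gives $\strFunc(H,m)=4\sqrt{3}/6=2/\sqrt{3}$, uniformly in $m\in\{3,4,\ldots\}\cup\{\infty\}$.
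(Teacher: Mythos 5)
Your proposal is correct and follows essentially the same route as the paper: an explicit witness in $\elliptopeNoSS^4_m$ attaining $4\sqrt{3}$, the certificate $\sqrt{3}I_4 - H \succeq 0$ for the matching upper bound, and $J_4$ with \Cref{lemma_validCutForL} for the denominator; indeed your $X^{*} = I_4 + H/\sqrt{3}$ is precisely the matrix $Y$ the paper writes out entrywise. The one genuine improvement is the identity $H^2 = 3I_4$, which gives the spectrum $\{\pm\sqrt{3}\}$ at once and replaces the paper's separate Schur-complement check for $Y \succeq 0$ and the verification that $\sqrt{3}I_4 - H \succeq 0$; your planned half-space check for $m=3$ also works, though the paper gets there in one line by writing $\imagUnit/\sqrt{3} = \tfrac{1}{3}\cdot 1 + \tfrac{2}{3}e^{2\pi\imagUnit/3}$ as a convex combination of third roots of unity.
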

\begin{proof}
    Let $m \geq 3$ be an integer or $m = \infty$. For any $Y \in \mathcal{E}^4_m$, the value $\langle H, Y \rangle$ provides a lower bound on $\max_{X \in \mathcal{E}^4_m} \langle H, X \rangle $. Therefore,
    \begin{align}
        \max_{X \in \elliptopeNoSS^4_m} \langle H, X \rangle \geq \left\langle H, r \begin{bmatrix}
            r^{-1} & e^{-\pi \imagUnit / 2 } & e^{\pi \imagUnit / 2} & 1 \\
            e^{\pi \imagUnit / 2} & r^{-1 } & e^{-\pi \imagUnit / 2 } & 1 \\
            e^{-\pi \imagUnit / 2 } & e^{\pi \imagUnit / 2 } & r^{-1} & 1 \\
            1 & 1 & 1 & r^{-1}
        \end{bmatrix} \right\rangle = 4 \sqrt{3}, \text{ for } r = \frac{1}{\sqrt{3}}.
    \end{align}
    We verify that this $Y \in \mathcal{E}^4_m$. To show that $Y \succeq 0$, we compute the Schur complement of $Y$ with respect to the upper left entry. This gives
    \begin{align}
    \frac{2}{3}\begin{bmatrix}
        \begin{matrix}
            1 & e^{-2 \pi \imagUnit/3} \\
            e^{2 \pi \imagUnit / 3} & 1
        \end{matrix} & \mathbf{0}_2 \\
        \mathbf{0}_2^\top & 0
        \end{bmatrix}\succeq 0.
    \end{align}
    To verify that all elements of $Y$ are contained in $\ConvHull( \mathcal{B}_m)$, note that $1 \in \ConvHull( \mathcal{B}_m)$. For $m = 3$, $r e^{\pi \imagUnit / 2}$ is a convex combination of $1$ and $e^{2 \pi \imagUnit / 3} \in \mathcal{B}_3$. For $m = 4$, note that the polytope $\ConvHull(\mathcal{B}_4)$ contains an inscribed circle of radius
    \begin{align}
        \label{eqn_radiusB4}
        \min_{x \in \partial \ConvHull( \mathcal{B}_4)} \| x \| = \left\| \frac{1 + \imagUnit}{2} \right\| = \frac{1}{\sqrt{2}}.
    \end{align}
    Since
    \begin{align}
        \label{eqn_valueInR}
        \| r e^{\pi \imagUnit / 2} \| = \frac{1}{\sqrt{3}} < \frac{1}{\sqrt{2}},
    \end{align}
    it follows that $r e^{\pi \imagUnit / 2} \in \ConvHull(\mathcal{B}_4)$. Now for the case $m > 4$: let $R_m$ denote the radius of the inscribed circle of $\ConvHull(\mathcal{B}_m)$ (by \eqref{eqn_radiusB4}, $R_4 = 1 / \sqrt{2})$. Note that $R_m$ is increasing in $m$. Therefore, following \eqref{eqn_valueInR}, we have
    \begin{align}
        \| r e^{\pi \imagUnit / 2} \| \leq R_4 \leq R_m ~~\Rightarrow~~ r e^{\pi \imagUnit / 2} \in \ConvHull(\mathcal{B}_m) \quad \forall m \geq 4.
    \end{align}
    Clearly, also $r e^{-\pi \imagUnit / 2} \in \ConvHull(\mathcal{B}_m)$, since $\ConvHull(\mathcal{B}_m)$ is closed under complex conjugation. Thus, we have shown that all elements of $Y$ are contained in $\ConvHull(\mathcal{B}_m)$ for all valid $m$. Therefore, $Y \in \elliptopeNoSS^4_m$.
    
    Moreover, for any $X \in \elliptopeNoSS^4_m$, we have
    \begin{align}
        \label{eqn_dualRewriting}
        \langle H, X \rangle = 4 \sqrt{3} - \left\langle  \sqrt{3} I_4 - H, X \right\rangle \leq 4 \sqrt{3},
    \end{align}
    since matrix $\sqrt{3} I_4 - H \succeq 0$. This proves $\max_{X \in \elliptopeNoSS^4_m} \langle H, X \rangle = 4 \sqrt{3}$, for all integers $m \geq 3$. 
    To show that $\max_{X \in \cutPolytopeNoSS^4_m} \langle H, X \rangle =  6$, we take $J_4 \in \cutPolytopeNoSS^4_m$, for which it follows
    \begin{align}
        \max_{X \in \cutPolytopeNoSS^4_m} \langle H, X \rangle \geq \langle H, J_4 \rangle = 6.
    \end{align}
    For the matching upper bound, we have
    \begin{align}
        \max_{X \in \cutPolytopeNoSS^4_m} \langle H, X \rangle \leq \max_{X \in \cutPolytopeNoSS^4_\infty} \langle H, X \rangle  \leq \max_{X \in \liftingSet{1}} \langle H, X \rangle = 6,
    \end{align}
    which follows from \Cref{lemma_validCutForL}. Lastly, to compute the strength, see \eqref{eqn_strFunctionDef}, we have $4 \sqrt{3} / 6 = 2 / \sqrt{3}$.
\end{proof}

\section{Facet enumeration of \texorpdfstring{$\mathcal{V}(\cutPolytopeNoSS^3_3)$}{V(CUT,3,3)}}
\label{sectionAppendix_facetEnumeration}
\Cref{table_facetsCut33} provides a complete enumeration of the facets of $\mathcal{V}(\cutPolytopeNoSS^3_3)$, see \eqref{eqn_upperTriuPart}. These facets are given implicitly in \Cref{ThmCut33}, and one facet is given explicitly in \Cref{lemma_strengthOfFacet}. 

The facets in \Cref{table_facetsCut33} were verified using the \texttt{SageMath} software \cite{sagemath}. The precise code is provided in \Cref{listing_sageCode}. 

\begin{lstlisting}[caption ={\texttt{SageMath} code for computing the facets of $\mathcal{V}(\mathrm{CUT}^3_3).$}, captionpos=b, label={listing_sageCode}]
# Create the field extension to symbolically evaluate sqrt(3)
x = polygen(ZZ, 'x')
K.<sqrt3> = NumberField(x^2 - 3, embedding=AA(3)**(1/2))

# Create the polyhedron with points of the cut polytope. 
# Points are given as (Re(x), Im(x)).
P = Polyhedron([(1,1,1,0,0,0),(1,-0.5,-0.5,0,-sqrt3/2,-sqrt3/2),
(1,-0.5,-0.5,0,sqrt3/2,sqrt3/2),(-0.5,1,-0.5,-sqrt3/2,0,sqrt3/2),
(-0.5,-0.5,1,-sqrt3/2,-sqrt3/2,0),(-0.5,-0.5,-0.5,-sqrt3/2,sqrt3/2,-sqrt3/2),
(-0.5,1,-0.5,sqrt3/2,0,-sqrt3/2),(-0.5,-0.5,-0.5,sqrt3/2,-sqrt3/2,sqrt3/2),
(-0.5,-0.5,1,sqrt3/2,sqrt3/2,0)])

# compute the facets (i.e., half-space representation) of the cut polytope
P.Hrepresentation()

\end{lstlisting}

In \Cref{table_facetsCut33}, facets 1 up to and including 18 are the triangle facets, using the terminology of \Cref{section_exactDescriptionCut33}.
 Facets 19 up to and including 27 are the facets that ensure $x_i \in \ConvHull(\mathcal{B}_3)$ for all $i \in [3]$, see \eqref{eqn_convBineqElement}.

\begin{table}[ht]
    \centering
    \begin{tabular}{llll}
 \hline
& $\eta_1$ & $\eta_2$ & $\eta_3$ \rule[-0.9ex]{0pt}{0pt} \\ \hline 
\multicolumn{1}{l|}{1.}  & $\imagUnit  $                     & $e^{ \pi \imagUnit  /6}$          & $\imagUnit$\rule{0pt}{2.6ex} \\
\multicolumn{1}{l|}{2.}  & $e^{5 \pi \imagUnit  /6}$         & $e^{- \pi \imagUnit  /6}$         & $e^{ \pi \imagUnit  /6}$          \\
\multicolumn{1}{l|}{4.}  & $e^{ \pi \imagUnit  /6}$          & $e^{- \pi \imagUnit  /6}$         & $e^{5 \pi \imagUnit  /6}$         \\
\multicolumn{1}{l|}{5.}  & $\imagUnit  $                     & $-\imagUnit  $                    & $e^{- \pi \imagUnit  /6}$         \\
\multicolumn{1}{l|}{5.}  & $e^{ \pi \imagUnit  /6}$          & $e^{-5 \pi \imagUnit  /6}$        & $e^{ \pi \imagUnit  /6}$          \\
\multicolumn{1}{l|}{6.}  & $e^{- \pi \imagUnit  /6}$         & $-\imagUnit  $                    & $\imagUnit  $                     \\
\multicolumn{1}{l|}{7.}  & $e^{-5 \pi \imagUnit  /6}$        & $e^{ \pi \imagUnit  /6}$          & $e^{- \pi \imagUnit  /6}$         \\
\multicolumn{1}{l|}{8.}  & $-\imagUnit  $                    & $\imagUnit  $                     & $e^{ \pi \imagUnit  /6}$          \\
\multicolumn{1}{l|}{9.}  & $e^{ \pi \imagUnit  /6}$          & $\imagUnit  $                     & $-\imagUnit  $                    \\
\multicolumn{1}{l|}{10.} & $e^{- \pi \imagUnit  /6}$         & $e^{ \pi \imagUnit  /6}$          & $e^{-5 \pi \imagUnit  /6}$        \\
\multicolumn{1}{l|}{11.} & $e^{- \pi \imagUnit  /6}$         & $e^{5 \pi \imagUnit  /6}$         & $e^{- \pi \imagUnit  /6}$         \\
\multicolumn{1}{l|}{12.} & $-\imagUnit  $                    & $e^{- \pi \imagUnit  /6}$         & $-\imagUnit  $                    \\
\multicolumn{1}{l|}{13.} & $e^{5 \pi \imagUnit  /6}$         & $\imagUnit  $                     & $e^{5 \pi \imagUnit  /6}$         \\
\multicolumn{1}{l|}{14.} & $e^{-5 \pi \imagUnit  /6}$        & $e^{5 \pi \imagUnit  /6}$         & $\imagUnit  $                     \\
\multicolumn{1}{l|}{15.} & $\imagUnit  $                     & $e^{5 \pi \imagUnit  /6}$         & $e^{-5 \pi \imagUnit  /6}$        \\
\multicolumn{1}{l|}{16.} & $e^{5 \pi \imagUnit  /6}$         & $e^{-5 \pi \imagUnit  /6}$        & $-\imagUnit  $                    \\
\multicolumn{1}{l|}{17.} & $e^{-5 \pi \imagUnit  /6}$        & $-\imagUnit  $                    & $e^{-5 \pi \imagUnit  /6}$        \\
\multicolumn{1}{l|}{18.} & $-\imagUnit  $                    & $e^{-5 \pi \imagUnit  /6}$        & $e^{5 \pi \imagUnit  /6}$         \\
\multicolumn{1}{l|}{19.} & $\sqrt{3}e^{ \pi \imagUnit  /3}$  & $0$                               & $0$                               \\
\multicolumn{1}{l|}{20.} & $\sqrt{3}e^{- \pi \imagUnit  /3}$ & $0$                               & $0$                               \\
\multicolumn{1}{l|}{21.} & $-\sqrt{3}$                       & $0$                               & $0$                               \\
\multicolumn{1}{l|}{22.} & $0$                               & $\sqrt{3}e^{- \pi \imagUnit  /3}$ & $0$                               \\
\multicolumn{1}{l|}{23.} & $0$                               & $\sqrt{3}e^{ \pi \imagUnit  /3}$  & $0$                               \\
\multicolumn{1}{l|}{24.} & $0$                               & $-\sqrt{3}$                       & $0$                               \\
\multicolumn{1}{l|}{25.} & $0$                               & $0$                               & $\sqrt{3}e^{ \pi \imagUnit  /3}$  \\
\multicolumn{1}{l|}{26.} & $0$                               & $0$                               & $\sqrt{3}e^{- \pi \imagUnit  /3}$ \\
\multicolumn{1}{l|}{27.} & $0$                               & $0$                               & $-\sqrt{3}$                       \\ \hline
\end{tabular}
    \caption{Coefficients of the facets $\mathrm{Re}\left(\sum_{i = 1}^3 \eta_i x_i \right) \leq \sqrt{3}/2$ for $\mathcal{V}(\cutPolytopeNoSS^3_3)$.}
    \label{table_facetsCut33}
\end{table}

\end{document}